\newcommand{\mycomment}[1]{}
\DeclareMathOperator{\cx}{cx}
\DeclareMathOperator{\depth}{depth}
\DeclareMathOperator{\Ext}{Ext}
\DeclareMathOperator{\gdim}{G-dim}
\DeclareMathOperator{\Hom}{Hom}
\DeclareMathOperator{\id}{id}
\DeclareMathOperator{\injcx}{inj\,cx}
\DeclareMathOperator{\pd}{pd}
\DeclareMathOperator{\Spec}{Spec}
\DeclareMathOperator{\Supp}{Supp}
\DeclareMathOperator{\Tor}{Tor}
\DeclareMathOperator{\type}{type}
\renewcommand{\ge}{\geqslant}
\renewcommand{\le}{\leqslant}
\newcommand{\fm}{\mathfrak{m}}
\newcommand{\fn}{\mathfrak{n}}
\newcommand{\fp}{\mathfrak{p}}
\renewcommand{\iff}{if and only if }
\theoremstyle{plain}
\newtheorem{theorem}{Theorem}[section]
\newtheorem{lemma}[theorem]{Lemma}
\newtheorem{proposition}[theorem]{Proposition}
\newtheorem{corollary}[theorem]{Corollary}
\newenvironment{customtheorem}[1]
{\innercustomtheorem}
{\endinnercustomtheorem}
\theoremstyle{definition}
\newtheorem{definition}[theorem]{Definition}
\newtheorem{conjecture}[theorem]{Conjecture}
\newtheorem{example}[theorem]{Example}
\newtheorem{notation}[theorem]{Notation}
\newtheorem{para}[theorem]{}
\newtheorem{question}[theorem]{Question}
\theoremstyle{remark}
\newtheorem{remark}[theorem]{Remark}
\numberwithin{equation}{section}
\title[Test properties of some Cohen-Macaulay modules]{Test properties of some Cohen-Macaulay modules and criteria for local rings via finite vanishing of Ext or Tor}
\author[S.~Dey]{Souvik Dey}
\address{Department of Mathematical Sciences, University of Arkansas, 850 West Dickson Street Fayetteville, Arkansas 72701 United States}
\email{souvikd@uark.edu}  
\urladdr{\url{https://orcid.org/0000-0001-8265-3301}}
\author[D.~Ghosh]{Dipankar Ghosh}
\address{Department of Mathematics, Indian Institute of Technology Kharagpur, West Bengal - 721302, India}
\email{dipankar@maths.iitkgp.ac.in, dipug23@gmail.com}
\urladdr{\url{https://orcid.org/0000-0002-3773-4003}}
\author[A.~Saha]{Aniruddha Saha}%
\address{Department of Mathematics, Indian Institute of Technology Hyderabad, Kandi, Sangareddy - 502285, Telangana, India}
\email{ma20resch11001@iith.ac.in, sahaa43@gmail.com}
\date{\today\\
	Corresponding Author: Dipankar Ghosh}%{September 12, 2025}
\subjclass[2020]{13D07, 13C14, 13H15, 13D05, 13H10}%; Secondary 13D07, 13B22
\keywords{Vanishing of Ext and Tor; Auslander-Reiten Conjecture; Cohen-Macaulay modules; Hilbert-Samuel multiplicity;  Homological dimensions; Criteria for local rings}
\begin{document}

\pagenumbering{arabic}
\thispagestyle{empty}

\begin{abstract}

In this article, we show test properties, in the sense of finitely many vanishing of Ext or Tor, of CM (Cohen-Macaulay) modules whose multiplicity and number of generators (resp., type) are related by certain inequalities. We apply these test behaviour, along with other results, to characterize various kinds of local rings, including  hypersurface rings of multiplicity at most two, surprisingly requiring only finitely many vanishing of Ext or Tor involving such CM modules. As further applications, we verify the long-standing (Generalized) Auslander-Reiten Conjecture for every CM module of minimal multiplicity over a Noetherian local ring, thus vastly extending a result of Huneke-\c{S}ega-Vraciu.

% \old{
%     Let $M$ be a CM (Cohen-Macaulay) module of dimension $r$ over a Noetherian local ring $(R,\fm)$.
%     %It is well-known that the multiplicity $e(M) \ge \mu(mM) + (1-r) \mu(M)$, where $\mu(M)$ denotes the minimal number of generators of $M$. When equality holds, $M$ is said to have minimal multiplicity. For example, a module $M$ of finite length has minimal multiplicity if and only if $\fm^2 M=0$.
%     % Shall we write the higher dimension version?
%     %In this article, we show that CM modules of minimal multiplicity are not far from having maximal (projective or injective) complexity and curvature (depending on whether $e(M) \le 2 \mu(M)$ or $e(M) \ge 2 \mu(M)$ respectively).
%    In this article, we show that CM modules are Tor-test as well as Ext-test modules (depending on whether $e(M) < 2 \mu(M)$ or $e(M) > 2 \mu(M)$ or $e(M) < 2 \type(M)$ or $e(M) > 2 \type(M)$), which detect finiteness of homological dimensions, namely projective and injective dimension, of a given module. Moreover, we have given an example of CM module having $e(M) = 2 \type(M)$ or $e(M) = 2 \mu(M)$ does not behave like test module, but such module having finite homological dimension, namely projective or injective dimension, can characterize hypersurface. As consequences of the above results, we show a number of characterizations of various other local rings. Most notably, we verify the long-standing Auslander-Reiten conjecture for every CM module of minimal multiplicity.
%    }
% via homological dimensions and vanoshing of (co)homologies involving CM modules of minimal multiplicity.
\end{abstract}
\maketitle

\section{Introduction}

% \begin{setup}
   Throughout this article, let $(R,\fm,k)$ be a (commutative) Noetherian local ring, and all $R$-modules are assumed to be finitely generated.
% \end{setup}

Test modules for detecting finite projective or injective dimension via eventual or single vanishing of Ext or Tor has appeared in the literature in various incarnations. Perhaps, the earliest such examples can be traced back to \cite{Ra71} and \cite{Jo85}, while more recent appearances can be found in
%\cite{HSV04}, 
\cite{CDT14}, \cite{ZCGS18}, \cite{KV21} and many other papers. One aim of our paper is to study such test behaviour, in the sense of finitely many vanishing of Ext or Tor, of CM (Cohen-Macaulay) modules whose multiplicity and number of generators (resp., type) are related by certain inequalities. A large class of CM modules satisfy these inequalities. In particular, this class contains all Ulrich modules; cf.~\Cref{defn:Ulrich-mod} and \cite[Thm.~4.5 and Rmk.~4.8]{DGS}. As consequences of these test properties, we are able to characterize `regular rings (of dimension at least two)' and `hypersurface rings of multiplicity at most two' via finite vanishing of Ext or Tor using such CM modules. Moreover, we deduce from these test properties that the (Generalized) Auslander-Reiten Conjecture holds true for every CM module of minimal multiplicity.

Most of our results on test properties of modules are recorded in Theorems \ref{thm-A} and \ref{thm-B}. For the notations, which are used in these theorems, we refer the reader to the first paragraph of Section~\ref{sec:preliminaries}. Moreover, we fix the following notations for use in Theorems~\ref{thm-A}, \ref{thm-B} and \ref{thm-C}.

\begin{notation}\label{notation}
    Let $M$ and $N$ be nonzero $R$-modules. Set $d:=\dim(R)$, $r :=\dim(M)$, $s=\dim(N)$, $b_j := \beta_j^{R}(N)$, $c_j := \sum_{i=0}^d {d \choose i} \mu^{j+i}_R(N)$ and $t_j:=\sum_{i=0}^s {s \choose i} \mu^{j+i}_R(N)$ for all $j\ge 0$.
\end{notation}

\begin{customtheorem}{A}[See Theorems~\ref{thm:e-mu-pd-id} and \ref{thm:pd-e-type-test}]\label{thm-A}
    Suppose that $M$ is CM.
\begin{enumerate}[\rm(1)]
    \item 
    Assume that $e(M) < 2\mu(M) $.
        \begin{enumerate}[\rm (a)]
         \item 
         If there exists $j \ge0$ such that $\Tor^R_n(M,N)=0$ for all $n=j-r+1,\ldots,j+b_j$, then $\pd_R(N)< \infty$.
         \item 
         Suppose that either $R$ or $N$ is CM. If there exists $j\ge d+r$ such that $\Ext_R^n(M,N)=0$ for all $n= j-r+1, \ldots, j+d+c_j$, then $\id_R(N)< \infty$.
        \end{enumerate}
    \item 
    Assume that $e(M) < 2 \type (M)$. If there exists $j \ge 0$ such that $\Ext^n_R(N,M)=0$ for all $n= j+1, \ldots, j+b_j+r$, then $\pd_R(N) < \infty$.
\end{enumerate}
\end{customtheorem}

The notion of modules of minimal multiplicity is used in the next and many of the subsequent results. For a CM $R$-module $M$ of dimension $r$, it is known that $e(M) \ge \mu(\fm M) + (1-r) \mu(M)$ (cf.~\Cref{lem:min-mult-mod}). When this inequality becomes equality, $M$ is said to have minimal multiplicity, see \Cref{defn:cm-min-mult}.

% For the notion of modules of minimal multiplicity that is used in the next and subsequent results, we refer the reader to \Cref{defn:cm-min-mult}. 
\newpage
\begin{customtheorem}{B}[See \Cref{thm:Ext-pd-id-test}]\label{thm-B}
    With {\rm \Cref{notation}}, suppose that $M$ has minimal multiplicity.
    \begin{enumerate}[\rm (1)]
        \item 
        Let $e(M)>2\mu(M)$. If there exists $j\ge 0$ such that $\Ext_R^n(N,M)=0$ for all $n= j+1, \ldots, j+b_j+r$, then $\pd_R(N) < \infty$.
        % In particular, if $\Ext_R^{\gg 0}(N,M)=0$, then $\pd_R(N)<\infty$.
        \item 
        Let $e(M)<2\mu(M)$. If there exists $j\ge \depth(N)$ such that $\Ext_R^n(M,N)=0$ for all $n= j-r+1, \ldots, j+\mu_R^j(N)$, then $\id_R(N) < \infty$.
        % In particular, if $\Ext_R^{\gg 0}(M,N)=0$, then $\id_R(N)<\infty$.
        \item \label{thm:Ext-pd-id-test-3}
        Let $e(M)>2\type(M)$. If there exists $j \ge 0$ such that $\Tor^R_n(M,N)=0$ for all $n=j-r+1,\ldots,j+b_j$, then $\pd_R(N)< \infty$.
        % In particular, if $\Tor^R_{\gg 0}(M,N)=0$, then $\pd_R(N)< \infty$.
     %   \item If $e(M)>2\type(M)$, $N$ has finite length, and there exists $j \ge 0$ such that $\Ext_R^n(M,N)=0$ for all $n= j-\dim(M)+1, \dots, j+\mu^j_R(N)$, then $\id_R(N)< \infty$. In particular, if $\Ext_R^{\gg 0}(M,N)=0$, then $\id_R(N)<\infty$.
        \item \label{thm:Ext-pd-id-test-4}
        Let $e(M)>2\type(M)$. Suppose that $N$ is CM. If there exists $j \ge 0$ such that $\Ext_R^n(M,N)=0$ for all $n= j-r+1, \dots, j+s+t_j$, then $\id_R(N)< \infty$.
        % In particular, if $\Ext_R^{\gg 0}(M,N)=0$, then $ id_R(N)<\infty$.
        % \item Suppose that $R$ is CM of dimension $d$, and $e(M)>2\type(M)$. If there exists $j \ge \dim (M)+d$ such that $\Ext_R^n(M,N)=0$ for all $n= j-\dim(M)+1, \dots, j+d+\sum_{i=0}^d {d \choose i} \mu^{j+i}_R(N)$, then $\id_R(N)< \infty$.
    \end{enumerate}
\end{customtheorem}

A nonzero CM module of minimal multiplicity satisfying $e(M) = 2\mu(M)$ or $e(M) = 2\type(M)$ need not be a test module, see Example~\ref{exam:strict-ineq-cant-be-eq}.

As an interesting consequence of \Cref{thm-A}, our previous work in \cite{DGS} and a non-trivial result of Iyengar-Ma-Walker from \cite{IMW}, we establish several surprising characterizations of hypersurfaces of multiplicity at most two, as stated below. To the best of our knowledge, these are the first characterizations of hypersurfaces requiring only finite vanishing of Ext or Tor.
%Here, we notice that any Ulrich module $M$ satisfies both the conditions $e(M)<2\mu(M)$ and $e(M)<2\type(M)$

\begin{customtheorem}{C}[See \Cref{thm:char-hypersurface-e-le-2}]\label{thm-C}
Following {\rm \Cref{notation}}, the statements below are equivalent.
\begin{enumerate}[\rm (1)]
    \item 
    $R$ is a hypersurface of multiplicity at most $2$.
    \item
    $R$ admits nonzero CM modules $M$ and $N$ such that $e(M)< 2\mu(M)$, $e(N)\le 2\mu(N)$, and there exists $j\ge 0$ satisfying $\Tor^R_n(M,N)=0$ for all $n=j-r+1,\ldots,j+b_j$.
    \item 
    $R$ admits nonzero CM modules $M$ and $N$ such that $ e(M) < 2 \mu(M)$, $e(N) \le 2 \type(N) $, and there exists $j\ge 0$ satisfying $\Ext_R^n(M,N)=0$ for all $n= j-r+1, \ldots, j+s+t_j$.
    \item 
    $R$ admits nonzero CM modules $M$ and $N$ such that $e(M) < 2 \type(M) $, $ e(N) \le 2 \mu(N)$, and there exists $j \ge 0$ such that $\Ext^n_R(N,M)=0$ for all $n= j+1,\ldots, j+b_j+r$.
    \item 
    For every $n$ with $0\le n \le \dim(R)$, there exists a CM $R$-module $N$ of dimension $n$ and minimal multiplicity such that $e(N)\le 2\mu(N)$ and $\pd_R(N)<\infty$.
    \item 
    For every $n$ with $0\le n \le \dim(R)$, there exists a CM $R$-module $N$ of dimension $n$ and minimal multiplicity such that $e(N)\le 2\type(N)$ and $\id_R(N)<\infty$.
\end{enumerate}
\end{customtheorem}

In the statements of \Cref{thm-C}, replacing `$e(N)\le 2\mu(N)$' and `$e(N) \le 2 \type(N) $' by `$e(N) < 2\mu(N)$' and `$e(N) < 2 \type(N) $' respectively, we obtain similar characterizations of regular local rings, see \Cref{thm:char-reg}. Along with \Cref{thm-C}, we prove \Cref{thm:char-hyp-e-le-2-not-field}, which gives a number of characterizations of a hypersurface of multiplicity at most two that is not a field.

As further applications, we address the following long standing conjectures.

\begin{conjecture}\label{ARC}
    Let $M$ be an $R$-module.
    \begin{enumerate}[\rm (1)]
        \item (Auslander-Reiten Conjecture \cite{AR75}) If $ \Ext^n_R(M, M \oplus R) = 0 $ for all $ n \ge 1 $, then $M$ is free.
        \item (Generalized Auslander-Reiten Conjecture \cite{Wei10}) If $ \Ext^n_R(M, M \oplus R) = 0 $ for all $ n \gg 1 $, then $M$ has finite projective dimension.
    \end{enumerate}
\end{conjecture}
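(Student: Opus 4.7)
The Auslander--Reiten Conjecture and its generalized version are longstanding open problems, and the plan below is best read as a reduction: it proves the conjecture whenever $M \oplus R$ can be shown to satisfy one of the test-module hypotheses of Theorems \ref{thm-A} or \ref{thm-B}, and it otherwise pinpoints where the obstruction lies.

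The strategy for part (2) is to apply \Cref{thm-A}(2) (or one of \Cref{thm-B}(1), (2)) with the ``test module'' role played by $M \oplus R$ and the second argument taken to be $M$. If, for example, $M \oplus R$ is CM and satisfies $e(M \oplus R) < 2\type(M \oplus R)$, then the hypothesis $\Ext^n_R(M, M \oplus R) = 0$ for $n \gg 0$ immediately forces $\pd_R(M) < \infty$, which is the conclusion of part (2). For part (1), one uses the same argument with the stronger hypothesis to conclude $\pd_R(M) < \infty$, and then combines this with $\Ext^n_R(M, R) = 0$ for all $n \ge 1$: if $\pd_R(M) = p > 0$, the last step of the minimal free resolution of $M$ would force $\Ext^p_R(M, R) \ne 0$, a contradiction; hence $\pd_R(M) = 0$, i.e., $M$ is free.

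The crucial step is verifying that $M \oplus R$ satisfies one of the test inequalities appearing in Theorems \ref{thm-A} or \ref{thm-B}. After reducing to $\depth M = \depth R$, so that $M\oplus R$ is CM, the additivity $e(M\oplus R) = e(M) + e(R)$ together with the corresponding formulas for $\mu$ and $\type$ rewrites $e(M\oplus R) < 2\type(M\oplus R)$ as $e(M) + e(R) < 2(\type M + \type R)$, and analogous translations apply to the $\mu$-version and to the minimal-multiplicity variants of \Cref{thm-B}. For $M$ a nonzero CM module of minimal multiplicity in which the relevant strict inequality between $e(M)$ and $\mu(M)$ (or between $e(M)$ and $\type(M)$) already holds, this can be verified directly, possibly after further reduction modulo a joint $M$- and $R$-regular sequence, and both parts of the conjecture then follow. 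The hardest aspect---and the reason the conjecture \emph{as worded} for an arbitrary $M$ remains open---is that in general none of these inequalities need hold for any auxiliary module naturally built out of $M$; producing such a test module in full generality appears to require ingredients outside the framework of Theorems \ref{thm-A} and \ref{thm-B}.
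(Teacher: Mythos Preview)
The statement you are asked to prove is a \emph{conjecture}, and the paper does not prove it in general; it verifies the conjecture only for CM modules of minimal multiplicity (Theorems~\ref{thm:ARC-finite-pd} and \ref{thm:ARC-free-criteria}). Your proposal is honest about this, but the strategy you outline for those special cases differs substantially from the paper's and carries a real gap.

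Your idea is to put $M\oplus R$ in the test-module slot of \Cref{thm-A}(2) or \Cref{thm-B}. For this to even get off the ground, $M\oplus R$ must be CM, which forces $R$ to be CM and $M$ to be MCM \emph{a priori}. The paper imposes neither hypothesis: it works over an arbitrary Noetherian local ring with $M$ merely CM of minimal multiplicity. The paper's approach instead uses $M$ itself as the test module and treats the two summands of $\Ext_R^n(M,M\oplus R)$ separately, bootstrapping as follows. When $e(M)<2\mu(M)$, the vanishing of $\Ext_R^{\gg0}(M,M)$ together with \Cref{thm:Ext-pd-id-test}(2) (or \Cref{thm:e-mu-pd-id}(2)) yields $\id_R(M)<\infty$; Bass' conjecture then gives that $R$ is CM, and only \emph{after} that does one invoke $\Ext_R^{\gg0}(M,R)=0$ via \Cref{thm:e-mu-pd-id}(3) to conclude $R$ is Gorenstein, whence $\pd_R(M)<\infty$. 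When $e(M)>2\mu(M)$, the vanishing of $\Ext_R^{\gg0}(M,M)$ alone suffices via \Cref{thm:Ext-pd-id-test}(1). Your direct use of $M\oplus R$ cannot replicate this two-step bootstrap.

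There is a further gap: the borderline case $e(M)=2\mu(M)$ is handled in the paper by an entirely different argument (\cite[Cor.~7.2]{DGS} shows $R$ is a complete intersection, after which \cite[Cor.~4.4]{AY98} applies). None of the strict-inequality test results in Theorems~\ref{thm-A} or \ref{thm-B} touches this case, and your proposal does not address it. Finally, for part~(1), once $\pd_R(M)<\infty$ is known, freeness follows exactly as you say from $\Ext_R^{1\le n\le \depth R}(M,R)=0$; that step is fine.
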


In \Cref{ARC}, it is immediate to observe that the statement (2) is stronger than (1). It should be mentioned that the Generalized Auslander-Reiten Conjecture (GARC) (hence also the Auslander-Reiten Conjecture, in short ARC) is known to hold true for various classes of rings and modules. For instance,  GARC holds for modules of finite CI-dimension (\cite[Cor.~4.4]{AY98} and \cite[Thm.~4.2]{AB00}), modules with finite Auslander-bound (\cite[Thm ~2.11, Def.~2.1]{Wei11}), rings with $\fm^3=0$ (\cite[Thm.~4.1]{HSV04}), all rings mentioned in \cite[Thm.~4.1 and Cor.~6.3]{AINS}, fiber-product rings (\cite[Thm.~4.5]{NS17}), Burch rings (\cite[Thms.~5.10 and 6.7]{Ta23}), Ulrich modules (\cite[Prop.~2.5.(3)]{DG23}) and Burch modules (\cite[Prop.~3.16]{DK22}). An instance where ARC is known to hold true but GARC is still open are CM normal rings (\cite[Cor.~1.3]{KOT21}). 
One of the main results of \cite{HSV04} is the validity of ARC for modules of minimal multiplicity over an Artinian local ring (\cite[Thm.~4.2]{HSV04}). From this, one can also derive that ARC holds for every MCM (maximal Cohen-Macaulay) module of minimal multiplicity over a CM local ring.
Theorems~\ref{thm:ARC-finite-pd} and \ref{thm:ARC-free-criteria}, respectively, ensure that both GARC and ARC hold for modules of minimal multiplicity over an arbitrary Noetherian local ring.  

% From a result of Huneke-\c{S}ega-Vraciu \cite[Thm.~4.2]{HSV04}, one can derive that ARC holds for every MCM (maximal Cohen-Macaulay) module of minimal multiplicity over a CM local ring.

\begin{customtheorem}{D}[See \Cref{thm:ARC-finite-pd}]\label{thm-F}
Every module of minimal multiplicity satisfies the Generalized Auslander-Reiten Conjecture.
\mycomment{
\begin{enumerate}[\rm (1)]
    \item A CM $R$-module $M$ has finite projective dimension if one of the following conditions holds.
    \begin{enumerate}[\rm (i)]
        \item 
        $ e(M) < 2 \mu(M)$, and there exists $j,l \ge 0$ such that
        \begin{enumerate}[]
            \item $\Ext_R^m(M,R)=0$ for all $m = j-\dim(M)+1,\ldots,j+d+\sum_{i=0}^d {d\choose i} \mu^{j+i}_R(R)$, where $d:=\dim(R)$, and
            \item $\Ext_R^n(M,M)=0$ for all $n = l-\dim(M)+1,\ldots,l+s+\sum_{i=0}^s {s\choose i} \mu^{l+i}_R(M)$, where $s=\dim(M)$.
        \end{enumerate}     
        \item 
        $ e(M) = 2 \mu(M) $, $M$ has minimal multiplicity, and $\Ext_R^n(M,M)=0$ for all $n\gg 0$.
        \item 
        $e(M) > 2 \mu(M) $, $M$ has minimal multiplicity, and there exists $l \ge 0$ such that $\Ext_R^n(M,M)=0$ for all $n= l+1, \ldots, l+\beta_l^{R}(M)+\dim(M)$.
    \end{enumerate}    
    \item 
    In particular, modules of minimal multiplicity satisfy the Generalized Auslander-Reiten Conjecture.
\end{enumerate}
}
\end{customtheorem}

Now, we briefly describe the structure of this paper. \Cref{sec:preliminaries} contains preliminaries on multiplicities of CM modules
%, especially on modules of minimal multiplicity, 
and 
%also on 
complexity of modules. \Cref{sec:test-modules} is devoted to proving the main results on test modules and, in particular, contains Theorems~\ref{thm-A} and \ref{thm-B}. \Cref{sec:char-complete-intersection-ring} presents various characterizations of regular rings and hypersurface rings, where we prove Theorem~\ref{thm:char-hypersurface-e-le-2}, \ref{thm:char-reg} and \ref{thm:char-hyp-e-le-2-not-field}. \Cref{sec:char-Gor-CM} provides some characterizations of Gorenstein and CM local rings. Finally, \Cref{sec:ARC-CM-MIN} applies the results from \Cref{sec:test-modules} to establish GARC for modules of minimal multiplicity.

\subsection*{Acknowledgments}
% Ghosh was supported by Start-up Research Grant (SRG) from SERB, DST, Govt.~of India with the Grant No SRG/2020/000597. 
Souvik Dey was partly supported by the Charles University Research Center program No.UNCE/24/SCI/022 and a grant GA \v{C}R 23-05148S from the Czech Science Foundation. Aniruddha Saha was supported by Senior Research Fellowship (SRF) from UGC, MHRD, Govt.\,of India, and it is part of this author's Ph.D. thesis. All of the work done in this paper took place when the first author was a research scientist at the Department of Algebra of Charles University, Prague, and he is very grateful for the outstanding atmosphere fostered by the department.  

\section{Preliminaries}\label{sec:preliminaries} 

Let $M$ be an $R$-module. Let $\mu_R(M)$ and $\lambda_R(M)$ denote the minimal number of generators and the length of $M$ respectively. For every integer $n$, let $\beta_n^R(M) :=\mu(\Ext_R^n(M,k))$ and $\mu_R^n(M) := \mu(\Ext_R^n(k,M))$ be the $n$th Betti and Bass numbers of $M$ respectively. The type of $M$ is defined to be $\type_R(M):=\mu_R^t(M)$, where $t=\depth(M)$. When the base ring $R$ is clear from the context, we drop the subscript from $\mu_R(M)$, $\lambda_R(M)$ and $\type_R(M)$. We denote $(-)^\vee := \Hom_R(-,E)$, the Matlis dual, where $E$ is the injective hull of $k$.

 \begin{definition}\cite[Defn.~2.1]{GTT15}\label{defn:Ulrich-mod}
     A nonzero $R$-module $M$ is called Ulrich if it is CM and $e(M) = \mu(M)$.
 \end{definition}

We recall the following facts about multiplicity of CM modules.

\begin{lemma}\cite[Lem.~4.1]{DGS}\label{lem:min-mult-mod}
    Let $M$ be a CM $R$-module of dimension $r$. Then the following hold.
    \begin{enumerate}[\rm (1)]
        \item $e(M) \ge \mu(\fm M) + (1-r) \mu(M)$.
        \item If $\fm^2M = ({\bf x}) \fm M$ for some system of parameters ${\bf x}$ of $M$, then
        \[
            e(M) = \lambda(M/({\bf x})M) = \mu(\fm M) + (1-r) \mu(M).
        \]
        \item If the residue field $k$ is infinite, then there exists a system of parameters ${\bf x}$ of $M$ which is a reduction of $\fm$ with respect to $M$. Moreover, if $e(M) = \mu(\fm M) + (1-r) \mu(M)$ holds, then $\fm^2M = ({\bf x}) \fm M$ holds for every such reduction ${\bf x}$.
    \end{enumerate}
\end{lemma}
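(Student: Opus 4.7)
The plan is to establish all three parts through careful length computations with respect to a minimal reduction of $\fm$ with respect to $M$. First I would reduce (for parts (1) and (2)) to the case where $k$ is infinite via the faithfully flat extension $R \to R[X]_{\fm R[X]}$, which preserves $e(M)$, $\mu(\cdot)$, depth, dimension, and the CM property of $M$, so the inequalities and equalities in question descend. With $k$ infinite, choose a SOP ${\bf x} = x_1, \ldots, x_r$ of $M$ that is a minimal reduction of $\fm$ with respect to $M$; existence is classical Northcott--Rees theory, since the analytic spread of $\fm$ with respect to the CM module $M$ equals $\dim M = r$. Because $M$ is CM, ${\bf x}$ is a regular sequence on $M$, so $e(M) = e({\bf x};M) = \lambda(M/{\bf x} M)$ by Serre's multiplicity formula.

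For (1), the short exact sequence $0 \to \fm M/{\bf x} M \to M/{\bf x} M \to M/\fm M \to 0$ gives $\lambda(M/{\bf x} M) = \lambda(\fm M/{\bf x} M) + \mu(M)$, while the natural surjection $(M/\fm M)^r \twoheadrightarrow ({\bf x} M + \fm^2 M)/\fm^2 M$ sending $(m_i) \mapsto \sum x_i m_i$ yields
\[
\lambda(\fm M/{\bf x} M) \ge \lambda\bigl(\fm M/({\bf x} M + \fm^2 M)\bigr) = \mu(\fm M) - \lambda\bigl(({\bf x} M + \fm^2 M)/\fm^2 M\bigr) \ge \mu(\fm M) - r\mu(M).
\]
Combining proves (1). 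For (2), if $\fm^2 M = ({\bf x})\fm M$, an easy induction gives $\fm^{n+1} M = ({\bf x})\fm^n M$ for all $n \ge 1$, so ${\bf x}$ is a reduction and the above formula for $e(M)$ applies. The first inequality becomes equality because $\fm^2 M \subseteq ({\bf x}) M$, and the second becomes equality once the surjection above is shown to be injective: if $\sum x_i m_i \in \fm^2 M = ({\bf x})\fm M$, write $\sum x_i m_i = \sum x_i n_i$ with $n_i \in \fm M$; then $\sum x_i(m_i - n_i) = 0$, and the triviality of Koszul relations for the regular sequence ${\bf x}$ on $M$ forces $m_i - n_i \in ({\bf x}) M \subseteq \fm M$, hence $m_i \in \fm M$.

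For (3), existence of a SOP of $M$ that is a minimal reduction of $\fm$ with respect to $M$ (when $k$ is infinite) is standard. For the \emph{moreover} part, suppose $e(M) = \mu(\fm M) + (1-r)\mu(M)$ with ${\bf x}$ such a reduction. Equality throughout the chain in (1) is then forced, yielding simultaneously $\fm^2 M \subseteq ({\bf x}) M$ (the first inequality saturating) and bijectivity of the map $(M/\fm M)^r \to ({\bf x} M + \fm^2 M)/\fm^2 M$ (the second saturating). Given $y \in \fm^2 M$, the first condition gives $y = \sum x_i m_i$ for some $m_i \in M$, and the injectivity then forces $m_i \in \fm M$, so $y \in ({\bf x})\fm M$; combined with the trivial reverse inclusion, $\fm^2 M = ({\bf x})\fm M$. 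The main subtlety is precisely this final step: the single multiplicity equality simultaneously forces two distinct constraints -- the inclusion $\fm^2 M \subseteq ({\bf x}) M$ and the injectivity of the Koszul-type map -- and one must combine them to upgrade to the stronger equality $\fm^2 M = ({\bf x})\fm M$; neither condition alone suffices.
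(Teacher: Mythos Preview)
The paper does not actually prove this lemma; it is quoted verbatim from \cite[Lem.~4.1]{DGS} and no argument is supplied here. Your proof is correct and follows the standard line of attack for such statements: reduce to infinite residue field, pick a minimal reduction ${\bf x}$ of $\fm$ with respect to $M$, use that $M$ CM makes ${\bf x}$ an $M$-regular sequence so $e(M)=\lambda(M/({\bf x})M)$, and then run length computations along the filtration $({\bf x})M\subseteq ({\bf x})M+\fm^2M\subseteq \fm M\subseteq M$ together with the Koszul-type surjection $(M/\fm M)^r\twoheadrightarrow ({\bf x}M+\fm^2M)/\fm^2M$.

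One small point worth making explicit in (2): you invoke ``triviality of Koszul relations'' to conclude $m_i-n_i\in({\bf x})M$; this is exactly $H_1(K_\bullet({\bf x};M))=0$, which holds since ${\bf x}$ is $M$-regular, but spelling out that the trivial syzygies generate and each lands in $({\bf x})M$ would tighten the exposition. Likewise in (3), your final paragraph correctly isolates the key mechanism---that the single numerical equality forces \emph{both} $\fm^2M\subseteq({\bf x})M$ and injectivity of the Koszul map, and that these must be combined---so the subtlety you flag is genuine and well handled. Since the paper offers no proof to compare against, there is nothing further to contrast.
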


\begin{definition}\cite[Defn.~15]{Pu03}\label{defn:cm-min-mult}
    An $R$-module $M$ is said to have minimal multiplicity if it is CM and $e(M)=\mu(\fm M)+(1-r) \mu(M)$, where $r= \dim(M)$.
\end{definition}

\begin{remark}\label{rmk:local-duality}
Let $M$ be a module of finite length over a CM local ring $R$ of dimension $d$ which admits a canonical module $\omega$. Then, $\lambda(M)=\lambda(\Ext_R^d(M,\omega))$.
\end{remark}

\begin{proof}
Since $M$ has finite length, $M = H^0_{\fm}(M)$. Thus, by \cite[3.5.9]{BH98}, $M$ is isomorphic to the Matlis dual of $\Ext_R^d(M,\omega)$. Hence we are done by \cite[3.2.12.(b)]{BH98}.
\end{proof}

\begin{remark}\label{rmk:e-type-dagger}
Suppose that $R$ and $M$ are CM, $\dim(R)=d$, $\dim(M)=r$, and $R$ admits a canonical module $\omega$. Set $M^{\dagger}:=\Ext_R^{d-r}(M,\omega)$. Then the following hold.
\begin{enumerate}[(1)]
    \item $M^{\dagger}$ is also CM of dimension $r$, and $M^{\dagger \dagger}\cong M$.
    \item $\mu(M)=\type(M^{\dagger})$, $\type(M)=\mu(M^{\dagger})$ and $e(M)=e(M^{\dagger})$.
    \item $e(M)=\type(M)$ if and only if $M^{\dagger}$ is Ulrich if and only if $M$ is Ulrich.
\end{enumerate}
\end{remark}

\begin{proof}
    The assertion (1) is shown in \cite[3.3.10]{BH98}. In view of \cite[3.3.11.(b)]{BH98}, $\mu(M)=\type(M^{\dagger})$ and $\type(M)=\mu(M^{\dagger})$. To show the equality of multiplicities, first recall that $e(M)=\sum_{\fp\in \Spec(R), \dim(R/\fp)=r} \lambda(M_{\fp})e(R/\fp)$ by \cite[4.7.8]{BH98}. Since $\dim(M^{\dagger})=r$, similarly,  $e(M^{\dagger})=\sum_{\fp\in \Spec(R), \dim(R/\fp)=r} \lambda((M^{\dagger})_{\fp})e(R/\fp)$. Fix $\fp\in \Spec(R)$ such that $\dim(R/\fp)=r$. Then $(M^{\dagger})_{\fp} = \Ext_R^{d-r}(M,\omega)_{\fp}\cong \Ext_{R_{\fp}}^{d-r}(M_{\fp},\omega_{\fp})$. Moreover, since $\dim(R/\fp)=r=\dim(M)$, one obtains that if $\fp \in \Supp(M)$, then $\fp$ is a minimal prime of $M$, consequently, the module $M_{\fp}$ has finite length. Note that $R_{\fp}$ is CM with a canonical module $\omega_{\fp}$, and $\dim(R_{\fp})=\dim(R)-\dim(R/\fp)=d-r$. Thus, $\lambda(\Ext_{R_{\fp}}^{d-r}(M_{\fp},\omega_{\fp}))=\lambda(M_{\fp})$ by Remark~\ref{rmk:local-duality}. This concludes the proof of all the equalities in (2). Finally, (3) follows from (1), (2) and \cite[Thm.~4.5]{DGS}. 
\end{proof}

\begin{remark}\label{rmk:flat-ext-CM}
Let $(R,\fm)\to (S,\fn)$ be a flat homomorphism such that $\fn=\fm S$. Let $M$ be an $R$-module. Then, the following hold.
\begin{enumerate}[\rm(1)]
    \item $\depth_R(M)=\depth_S(M\otimes_R S)$ (see, e.g., \cite[1.2.16.(a)]{BH98}).
    \item $\dim_R(M)=\dim_S(M\otimes_R S)$ (cf.~\cite[A.11.(b)]{BH98}).
    \item \label{mh-tensor-S}$\fm^h M \otimes_R S \cong (\fm^h S)(M\otimes_R S)=\fn^h (M\otimes_R S)$ (see \cite[Lem.~5.2.5.(1)]{ddd}).
    \item $e_R(M)=e_S(M\otimes_R S)$ (see, e.g., \cite[4.3]{ddd}).
    \item $\mu_R(M)=\mu_S(M\otimes_R S)$ and $\type_R(M)=\type_S(M\otimes_R S)$.
\end{enumerate}
\end{remark}

\begin{remark}\label{rmk:flat-ext-Ulrich}
    Let $(R,\fm)\to (S,\fn)$ be a local flat homomorphism such that $\fn=\fm S$. Then, as consequences of Remark~\ref{rmk:flat-ext-CM}, the following hold.
    \begin{enumerate}[\rm(1)]
        \item
        $M$ is an $R$-module of minimal multiplicity if and only if $M\otimes_R S$ is an $S$-module of minimal multiplicity.
        \item
        $M$ is an Ulrich $R$-module if and only if $M\otimes_R S$ is an Ulrich $S$-module. (\cite[Proposition 2.2(3)]{GTT15})
    \end{enumerate}
\end{remark}

\begin{remark}\label{rmk:Ulrich-min-mult}
Any Ulrich $R$-module has minimal multiplicity. Indeed, by Remark~\ref{rmk:flat-ext-Ulrich}, we may pass to the faithfully flat extension $R[X]_{\fm[X]}$, and assume that the residue field is infinite. So, if $M$ is an Ulrich $R$-module, then in view of \cite[Prop.~2.2.(2)]{GTT15}, $\fm M=({\bf x})M$ for some system of parameters ${\bf x}$ of $M$, and hence $M$ has minimal multiplicity by \Cref{lem:min-mult-mod}. 
\end{remark}

\begin{remark}
    The results on CM modules of minimal multiplicity and the results on CM modules $M$ with $e(M)\le 2\mu(M)$ considerably strengthen those results for Ulrich modules.
\end{remark}

\begin{definition}\label{defn:ring-min-mult}
    The ring $R$ is said to have minimal multiplicity if $R$ has minimal multiplicity as an $R$-module, i.e., $R$ is CM and $e(R)=\mu(\fm)-d+1$, where $d=\dim(R)$.
\end{definition}

\begin{remark}\label{rmk:min-mult-R-M}
    Suppose that $R$ has minimal multiplicity. Then every MCM $R$-module $M$ also has minimal multiplicity. Indeed, we may assume that the residue field of $R$ is infinite. Then, there exists a maximal $R$-regular sequence $\mathbf x$ satisfying $\fm^2=(\mathbf x)\fm$. Thus $\fm^2M=(\mathbf x)\fm M$. Since $M$ is MCM, $\mathbf x$ is also a maximal $M$-regular sequence. So, by \Cref{lem:min-mult-mod}, $M$ has minimal multiplicity. 
\end{remark} 

\begin{remark}\label{rmk:superficial}
(1) It is easily observed that if $I$ is an ideal of $R$ such that $R/I$ is CM, then $R/I$ has minimal multiplicity as a ring if and only if it has minimal multiplicity as an $R$-module.

(2) Let $R$ be CM of dimension $d$ with infinite residue field. Then, by \cite[preceding paragraph of Lem.~2.4]{Gho19}, there exists an $R$-regular sequence $x_1,\ldots,x_d$, which is a part of a minimal generating set of $\fm$, and $e(R)=e(R/(x_1,\ldots,x_n))$ for every $0\le n \le d$. It follows that for such a sequence $x_1,\dots,x_n$, the ring $R$ has minimal multiplicity if and only if so does $R/(x_1,\dots,x_n)$. Indeed, fix such an $n$ and put $X:=R/(x_1,\ldots,x_n)$. Then, $(1-\dim(X))\mu(X)=1-d +n$, and $\mu(\fm X)=\mu(\fm/(x_1,\dots,x_n))=\mu(\fm)-n$. Hence, $\mu(\fm X)+(1-\dim(X))\mu(X)=\mu(\fm)-n+1-d+n=\mu(\fm)-d+1$. Since $e(X)=e(R)$, we are now done by Definitions~\ref{defn:cm-min-mult} and \ref{defn:ring-min-mult}. 
\end{remark}

Here we recall the notion of complexity of a module.

\begin{definition}\label{defn:cx}
    The complexity of an $R$ module $M$, denoted $\cx_R(M)$, is defined to be the smallest non-negative integer $b$ such that $\beta_n^R (M)\le \alpha n^{b-1}$ for some real number $\alpha>0$ and for all $n \gg 0$. If no such $b$ exists, then $\cx_R(M) :=\infty$.
    
    Similarly, replacing $\beta_n^R(M)$ by $\mu^n_R(M)$, one obtains the notion of injective complexity of $M$, denoted by $\injcx_R(M)$.
\end{definition}

The residue field has extremal complexity. Moreover, complexity  of the residue field characterize complete intersection local rings.
%We need the following well known results on complexity and curvature.
%These are well known. However, we refer to \cite[Prop.~2]{Avr96} and \cite{Avr98} for the proofs.

\begin{proposition}\label{prop:cx-curv-facts}{\ }
%	Let $M$ be an $R$-module. Then
	\begin{enumerate}[\rm (1)]
		\item \label{cx-curv-M-k}\cite[Prop.~2]{Avr96} For every $R$-module $M$,
		\begin{enumerate}[\rm (a)]
			\item $\cx_R(M) \le \cx_R(k)$ and $\injcx_R(M) \le \injcx_R(k) =\cx_R(k)$.
			
		\end{enumerate}
		\item \label{char-CI-via-cx-curv}\cite[Thm.~3]{Avr96} The following statements are equivalent:
		\begin{enumerate}[\rm (a)]
			\item $R$ is complete intersection $($resp., of codimension $c$$)$.
			\item $\cx_R(k)<\infty$ $($resp., $\cx_R(k)=c<\infty$$)$.
			
		\end{enumerate}
		\item \label{CI-ring-finite-cx-curv}It follows from $(1)$ and $(2)$ that if $R$ is complete intersection, then for every $R$-module $M$, $\cx_R(M) < \infty$, $\injcx_R(M) < \infty$.
%		\[
%			\cx_R(M) < \infty, \;\; \injcx_R(M) < \infty, \;\; \curv_R(M) \le 1 \;\; \mbox{and} \;\; \injcurv_R(M) \le 1
%		\]
%		for any $R$-module $M$.
       %  \item \label{CI-ring-finite-cx-pair}
       % In view of \cite[Thm.~II.(3)]{AB00}, if $R$ is complete intersection, then for every pair of $R$-modules $M$ and $N$, $\cx_R(M,N) = \cx_R(N,M) \le \min\{\cx_R(M),\cx_R(N)\}< \infty$ by \eqref{CI-ring-finite-cx-curv}.
	\end{enumerate}
\end{proposition}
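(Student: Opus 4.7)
The plan is to prove the three parts by leveraging the DG algebra structure on the minimal free resolution of $k$ together with Avramov's theory of deviations, as in \cite{Avr96}.

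For (1)(a), my approach is to use an acyclic closure (Tate resolution) of $k$ over $R$, which is a minimal free resolution carrying a DG $\Gamma$-algebra structure. Tensoring with $M$ makes $\Tor^R_\ast(M,k)$ a graded module over the Betti algebra $\Tor^R_\ast(k,k)$. Since $M$ is finitely generated, this graded module is finitely generated (being concentrated in nonnegative degrees with finite-dimensional components), so its Hilbert series is bounded coefficient-wise by a finite sum of shifts of $P^R_k(t)=\sum_n \beta_n^R(k)t^n$; this yields $\cx_R(M)\le \cx_R(k)$. For the injective side, the natural isomorphism $\Ext^n_R(k,k)\cong \Hom_k(\Tor^R_n(k,k),k)$ gives $\mu^n_R(k)=\beta_n^R(k)$, hence $\injcx_R(k)=\cx_R(k)$. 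Then $\injcx_R(M)\le \injcx_R(k)$ follows by the analogous argument, viewing $\Ext_R^\ast(k,M)$ as a finitely generated graded module over $\Ext_R^\ast(k,k)$.

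For (2), I would invoke Avramov's deviations $\varepsilon_i(R)$, which appear as exponents in the factorization
\[
P_k^R(t)=\frac{\prod_{i\ge 1}(1+t^{2i-1})^{\varepsilon_{2i-1}(R)}}{\prod_{i\ge 1}(1-t^{2i})^{\varepsilon_{2i}(R)}}.
\]
By the Assmus--Gulliksen--Tate theorem, $R$ is complete intersection of codimension $c$ if and only if $\varepsilon_i(R)=0$ for all $i\ge 3$ and $\varepsilon_2(R)=c$. In that case the factorization collapses to $(1+t)^{\embdim(R)}/(1-t^2)^c$, forcing $\beta_n^R(k)=O(n^{c-1})$ and hence $\cx_R(k)=c$. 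Conversely, $\cx_R(k)<\infty$ forces polynomial growth of $P_k^R(t)$; any nonzero odd deviation $\varepsilon_{2i-1}(R)$ with $i\ge 2$ would contribute an exponential factor, and any nonzero even deviation $\varepsilon_{2i}(R)$ with $i\ge 2$ would produce super-polynomial growth, so all higher deviations vanish, whence $R$ is complete intersection with $\cx_R(k)=\varepsilon_2(R)$.

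Part (3) is then immediate: if $R$ is complete intersection, (2) gives $\cx_R(k)<\infty$, and (1) yields $\cx_R(M),\injcx_R(M)\le \cx_R(k)<\infty$ for every $M$. The principal obstacle is part (2): identifying the vanishing pattern of higher deviations with the CI condition is the deep content and relies on Gulliksen's structure theorem for minimal models (or equivalently, rigidity of the homotopy Lie algebra in degrees $\ge 3$). The rest of the proposition is essentially formal once the Tate resolution / DG-module machinery is in place.
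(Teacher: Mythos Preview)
The paper does not give its own proof of this proposition; it simply records the statements with citations to \cite{Avr96}. So your proposal must stand on its own as a sketch of Avramov's arguments.

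There are two genuine gaps.

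In (1)(a), your claim that $\Tor^R_\ast(M,k)$ is a finitely generated $\Tor^R_\ast(k,k)$-module because it is ``concentrated in nonnegative degrees with finite-dimensional components'' is a non sequitur: those two properties in no way force finite generation over a graded $k$-algebra (e.g., $\bigoplus_{n\ge 0} k\,t^n$ as a module over $k$). Finite generation of $\Tor^R_\ast(M,k)$ over $\Tor^R_\ast(k,k)$ is a theorem of Gulliksen when $R$ is a complete intersection, but no such statement is available over arbitrary local rings, and your parenthetical justification does not supply one. One route that does work is to note that the inequality $\cx_R(M)\le\cx_R(k)$ has content only when $\cx_R(k)<\infty$, which by (2) already forces $R$ to be a complete intersection, whereupon Gulliksen's finiteness theorem applies; but then (1) is no longer logically prior to (2), contrary to the way you (and the proposition) present it. The same defect affects your argument for $\injcx_R(M)\le\injcx_R(k)$ via finite generation of $\Ext_R^\ast(k,M)$ over $\Ext_R^\ast(k,k)$.

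In the converse direction of (2), your assertion that a single nonzero deviation $\varepsilon_{2i-1}(R)$ with $i\ge 2$ ``would contribute an exponential factor'' is simply false: $(1+t^{2i-1})^{\varepsilon}$ is a polynomial. Likewise, a single factor $(1-t^{2i})^{-\varepsilon}$ has coefficients growing like a polynomial of degree $\varepsilon-1$, not super-polynomially. A finite collection of nonzero higher deviations is therefore perfectly compatible with polynomial growth of $P^R_k(t)$. The actual mechanism is that if $R$ is not a complete intersection then \emph{infinitely many} deviations are nonzero (this is Gulliksen's rigidity theorem, which you do cite at the end), and only the resulting infinite product drives the radius of convergence of $P^R_k(t)$ below~$1$. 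Your concluding sentence correctly locates the depth of the argument, but the intermediate reasoning about individual factors is wrong and should be deleted.

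Part (3) is correctly deduced from (1) and (2).
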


We recall the following results from our previous work \cite{DGS} which will be used several times in the upcoming sections.

\begin{proposition}\cite[cf.~Cor.~5.5]{DGS}\label{DGS.5.5}
Let $M$ be a nonzero CM $R$-module.
\begin{enumerate}[\rm(1)]
    \item If $e(M)\le 2\mu(M) $, then $\cx_R(k)\le 1+\cx_R(M)$, i.e., $\cx_R(k)=\cx_R(M)$ or $1+\cx_R(M)$.
    \item If $e(M)<2\mu(M) $, then $\cx_R(k) = \cx_R(M)$ .
    \item If $e(M)\le 2\type(M) $, then $\cx_R(k)\le 1+\injcx_R(M)$, i.e., $\cx_R(k)=\injcx_R(M)$ or $1+\injcx_R(M)$.
    \item If $e(M)<2\type(M) $, then $\cx_R(k)=\injcx_R(M)$ .
\end{enumerate}
\end{proposition}

\begin{proposition}\cite[cf.~Cor.~6.6]{DGS}\label{DGS.6.6}
Let $M$ be a nonzero $R$-module of minimal multiplicity.
\begin{enumerate}[\rm(1)]
    \item
    If $e(M)\ge 2\mu(M)$, then $\cx_R(k)\le 1+\injcx_R(M)$, i.e., $\cx_R(k)\in\{\injcx_R(M),1+\injcx_R(M)\}$.
    \item 
    If $e(M)>2\mu(M)$, then $\cx_R(k)=\injcx_R(M)$.
    \item
    If $e(M)\ge 2\type(M)$, then $\cx_R(k)\le 1+\cx_R(M)$, i.e., $\cx_R(k)\in\{\cx_R(M),1+\cx_R(M)\}$.
    \item 
    If $e(M)>2\type(M)$, then $\cx_R(k)=\cx_R(M)$.
\end{enumerate}
\end{proposition}

\section{Vanishing of Ext and Tor for some CM modules}\label{sec:test-modules}

Here we mainly show that vanishing of (co)homologies involving certain CM module detect finite homological dimensions, namely projective and injective dimensions, of the other module. We use the following terminologies.

\begin{definition}\label{defn:test-modules}
    An $R$-module $M$ is said to be:
    \begin{enumerate}[\rm (1)]
        \item 
        Tor-pd-test if for every $R$-module $N$, whenever $\Tor_{\gg 0}^R (M,N)=0$, one has that $\pd_R(N)< \infty$. It was introduced as test module in \cite[Defn.~1.1]{CDT14}.
        \item 
        Ext-pd-test if for every $R$-module $N$, whenever $\Ext^{\gg 0}_R (N,M)=0$, one has that $\pd_R(N)< \infty$.
        \item 
        Ext-id-test if for every $R$-module $N$, whenever $\Ext^{\gg 0}_R (M,N)=0$, one has that $\id_R(N)< \infty$.
    \end{enumerate}
\end{definition}

The following proposition ensures that every nonzero module $M$ with $ \lambda(M) < 2 \mu(M) $ is Tor-pd-test as well as Ext-id-test when testing for injective dimension of modules of finite length.

\begin{proposition}\label{prop:lambda-mu-id-pd}
    Let $X$ and $M$ be nonzero $R$-modules such that $\lambda(M) < 2\mu(M) $.
    \begin{enumerate}[\rm (1)]
        \item 
        If there exists $j\ge 0$ such that $\Tor^R_n(M,X)=0$ for all $ n = j+1,\ldots,j+\beta_j^{R}(X) $, then $ \pd_R(X) < \infty$.
        \item 
        If $\lambda(X) < \infty $ and there exists $j \ge 0$ such that $\Ext^n_R(M,X)=0$ for all $n= j+1, \dots, j+\mu^j_R(X)$, then $\id_R(X)< \infty$.
    \end{enumerate}
\end{proposition}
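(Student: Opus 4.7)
For part (1), the plan is to reduce to $j = 0$ by replacing $X$ with the syzygy $Y := \Omega^j X$, which satisfies $\mu(Y) = \beta_j^R(X)$, $\beta_n^R(Y) = \beta_{n+j}^R(X)$, and $\Tor^R_n(M, Y) \cong \Tor^R_{n+j}(M, X)$. Since finite projective dimension is insensitive to syzygies, it will suffice to show $\pd_R(Y) < \infty$ under the hypothesis that $\Tor^R_n(M, Y) = 0$ for $n = 1, \ldots, \mu(Y)$. Note that $\lambda(M) < 2\mu(M)$ automatically forces $M$ to have finite length.

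The key device will be the short exact sequence $0 \to \fm M \to M \to k^{\mu(M)} \to 0$, for which $\lambda(\fm M) = \lambda(M) - \mu(M) < \mu(M)$ by hypothesis. A standard composition-series filtration of any finite length module $Z$ by copies of $k$, combined with iterated long exact sequences in Tor, yields the bound $\lambda(\Tor^R_n(Z, Y)) \le \lambda(Z)\,\beta_n^R(Y)$. The hypothesized vanishing $\Tor^R_n(M, Y) = 0$ then produces an injection $\Tor^R_n(k^{\mu(M)}, Y) \hookrightarrow \Tor^R_{n-1}(\fm M, Y)$, so
\[
\mu(M)\,\beta_n^R(Y) \;\le\; \lambda(\Tor^R_{n-1}(\fm M, Y)) \;\le\; (\lambda(M) - \mu(M))\,\beta_{n-1}^R(Y).
\]
Because $\lambda(M) - \mu(M) < \mu(M)$, this forces the strict integer inequality $\beta_n^R(Y) < \beta_{n-1}^R(Y)$ whenever $\beta_{n-1}^R(Y) > 0$. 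Iterating across $n = 1, \ldots, \mu(Y) = \beta_0^R(Y)$, the non-negative integer sequence $\beta_n^R(Y)$ must hit $0$ somewhere in that range, giving $\pd_R(Y) < \infty$.

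For part (2), I plan to deduce it from (1) via Matlis duality. Since $\lambda(X) < \infty$, the dual $X^\vee$ has finite length, is Matlis-reflexive, and satisfies $\beta_j^R(X^\vee) = \mu^j_R(X)$. The standard Hom--tensor adjunction, together with the injectivity of $E$, provides a natural isomorphism
\[
\Ext^n_R(M, X) \;\cong\; \Ext^n_R(M, (X^\vee)^\vee) \;\cong\; (\Tor^R_n(M, X^\vee))^\vee,
\]
so the vanishing hypothesis on $\Ext^n_R(M, X)$ translates to $\Tor^R_n(M, X^\vee) = 0$ in the same numerical range. Applying part (1) to $X^\vee$ in place of $X$ will give $\pd_R(X^\vee) < \infty$, after which Matlis-dualizing a finite projective resolution of $X^\vee$ produces a finite injective resolution of $X^{\vee\vee} \cong X$, yielding $\id_R(X) < \infty$.

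The main delicate point will be the passage from the real-number estimate $\mu(M)\,\beta_n^R(Y) \le (\lambda(M) - \mu(M))\,\beta_{n-1}^R(Y)$ to a genuine integer drop $\beta_n^R(Y) \le \beta_{n-1}^R(Y) - 1$, which is precisely what powers the iteration; this step relies on $\beta_n^R(Y) \in \mathbb{Z}_{\ge 0}$ together with the strict inequality $\lambda(M) - \mu(M) < \mu(M)$. Once the short exact sequence, the filtration bound, and the Matlis-duality translation are in place, everything else should be routine.
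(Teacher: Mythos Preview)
Your proposal is correct and follows essentially the same approach as the paper. The paper works directly at index $j$ (rather than first shifting to $j=0$ via syzygies) and cites \cite[Lem.~5.1.(1)]{DGS} for the key inequality $\beta_n^R(X)<\beta_{n-1}^R(X)$, which is precisely the estimate you derive from the short exact sequence $0\to\fm M\to M\to k^{\mu(M)}\to 0$ and the filtration bound; part~(2) is handled identically via Matlis duality in both.
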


\begin{proof}
    (1) If possible, assume that $\pd_R(X) = \infty$. Then $\beta_m^R(X) \neq 0$ for all $m\ge 0$. Since $\lambda(M) < 2 \mu(M)$, it follows that $\frac{\lambda(M)}{\mu(M)}-1< 1$. Hence, by \cite[Lem.~5.1.(1)]{DGS}, one obtains that $\beta_{n}^R(X) < \beta_{n-1}^R(X)$, i.e., $\beta_{n}^R(X) \le \beta_{n-1}^R(X)-1$ for all $n= j+1, \dots, j+\beta_j^R(X)$. Hence, setting $s:= j+\beta_j^R(X)$,
	\[
	   \beta_{s}^R(X)\le \beta_{s-(s-j)}^R(X)-(s-j) = \beta_{j}^R(X)-(s-j) = \beta_{j}^R(X)-\beta_{j}^R(X) = 0.
	\]
	Thus $\beta_{s}^R(X) = 0$, which is a contradiction. Therefore $\pd_R(X)< \infty$.
	
	(2) Since $\lambda(X) < \infty$, one has that $\Ext_R^n(M,X)^\vee \cong \Tor^R_n(M,X^\vee)$ and $\mu_R^n(X)=\beta_n^R(X^\vee)$ for all $n \ge 0$. So, from the given vanishing condition, $\Tor^R_n(M,X^\vee)=0$ for all $n= j+1, \dots, j+\beta_j^R(X^\vee)$. Therefore, by (1), $\pd_R(X^\vee) < \infty$, and hence $\id_R(X) < \infty$.
\end{proof}

Every nonzero $R$-module $M$ with $ \lambda(M) < 2 \type(M) $ is Ext-pd-test.

\begin{proposition}\label{prop:lambda-bass-pd}
    Let $M$ be a nonzero $R$-module such that $\lambda(M) < 2 \type(M)$. Let $X$ be an $R$-module, and there exists $j \ge 0$ such that $\Ext^n_R(X,M)=0$ for all $n= j+1, \dots, j+\beta_j^R(X)$. Then $\pd_R(X)< \infty$.
\end{proposition}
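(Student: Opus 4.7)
The plan is to reduce the statement to \Cref{prop:lambda-mu-id-pd}(1) via Matlis duality, in the same spirit as the derivation of \Cref{prop:lambda-mu-id-pd}(2) from \Cref{prop:lambda-mu-id-pd}(1) earlier in this section. First, since $\type(M)$ is finite for any nonzero finitely generated $R$-module, the hypothesis $\lambda(M) < 2\type(M)$ forces $\lambda(M) < \infty$, so $M$ has finite length. Setting $N := M^\vee$, standard properties of Matlis duality on finite length modules yield $\lambda(N) = \lambda(M)$, $\mu(N) = \type(M)$, and $N^\vee \cong M$. In particular, $N$ is a nonzero $R$-module satisfying $\lambda(N) < 2\mu(N)$.

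Next, I would combine the natural isomorphism $\Hom_R(\Tor_n^R(X,N),E) \cong \Ext_R^n(X,\Hom_R(N,E))$ with $\Hom_R(N,E) = N^\vee \cong M$ to obtain
\[
\Ext_R^n(X,M) \;\cong\; \Tor_n^R(X,N)^\vee \qquad \text{for all } n \ge 0.
\]
Since $(-)^\vee$ is faithful on the category of $R$-modules (a module vanishes if and only if its Matlis dual does), the given vanishing $\Ext_R^n(X,M)=0$ for $n=j+1,\ldots,j+\beta_j^R(X)$ translates exactly into $\Tor_n^R(N,X)=0$ for the same range of $n$.

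Finally, applying \Cref{prop:lambda-mu-id-pd}(1) with $M$ replaced by $N$ then gives $\pd_R(X) < \infty$, as desired. The only step that is not purely formal is the identity $\mu(M^\vee) = \type(M)$, which follows from the standard identification $\Soc(M^\vee) \cong (M/\fm M)^\vee$ together with $M^{\vee\vee} \cong M$ for a finite length module $M$; beyond this bookkeeping, I foresee no substantial obstacle, as the whole numerical input of \Cref{prop:lambda-mu-id-pd}(1) is already tailored to what Matlis duality produces here.
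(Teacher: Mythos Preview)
Your proof is correct, but it takes a different route from the paper's own argument. The paper argues directly: assuming $\pd_R(X)=\infty$, it invokes \cite[Lem.~5.1.(2)]{DGS}, which says that under the vanishing $\Ext^n_R(X,M)=0$ one has $\beta_n^R(X)\le\big(\tfrac{\lambda(M)}{\type(M)}-1\big)\beta_{n-1}^R(X)$, and then derives a contradiction exactly as in the proof of \Cref{prop:lambda-mu-id-pd}(1). Your argument instead passes through Matlis duality: replacing $M$ by $N:=M^{\vee}$ converts the hypothesis $\lambda(M)<2\type(M)$ into $\lambda(N)<2\mu(N)$ and the Ext vanishing into the corresponding Tor vanishing, after which \Cref{prop:lambda-mu-id-pd}(1) applies verbatim. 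The advantage of your approach is that it stays entirely within the paper and avoids citing a second external inequality; the paper's approach keeps the two propositions formally parallel (each resting on its own part of \cite[Lem.~5.1]{DGS}) and does not require the finite-length bookkeeping of Matlis duality. Both are equally short, and the underlying numerics are the same.
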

	
\begin{proof}
	 If possible, assume that $\pd_R(X) = \infty$. Then $\beta_m^R(X) \neq 0$ for all $m\ge 0$. Since $\lambda(M) < 2 \type(M)$, one has that $\frac{\lambda(M)}{\type(M)}-1 < 1$. Therefore, by \cite[Lem.~5.1.(2)]{DGS}, one obtains that $\beta_{n}^R(X) < \beta_{n-1}^R(X)$, i.e., $\beta_{n}^R(X) \le \beta_{n-1}^R(X)-1$ for all $n= j+1, \dots, j+\beta_j^R(X)$. Hence, setting $s:= j+\beta_j^R(X)$,
	\[
	\beta_{s}^R(X)\le \beta_{s-(s-j)}^R(X)-(s-j) = \beta_{j}^R(X)-(s-j) = \beta_{j}^R(X)-\beta_{j}^R(X) = 0.
	\]
	Thus $\beta_{s}^R(X) = 0$, which is a contradiction. Therefore $\pd_R(X)< \infty$.
\end{proof}

Next we analyze the testing properties of a nonzero $R$-module $M$ satisfying $ \fm^2 M = 0 $. The reader may compare \Cref{prop:pd-id-test-m2M-zero}.(2) with \Cref{prop:lambda-mu-id-pd}.(2).

\begin{proposition}\label{prop:pd-id-test-m2M-zero}
	Let $M$ and $X$ be nonzero $R$-modules such that $\fm^2 M = 0$.  
 \begin{enumerate}[\rm (1)]
     \item 
     If $ \lambda(M) > 2 \mu(M) $ and there exists $j\ge 0$ such that $\Ext_R^n(X,M)=0$ for all $n= j+1, \ldots, j+\beta_j^{R}(X)$, then $\pd_R(X) < \infty$.
     \item 
     If $ \lambda(M) < 2 \mu(M) $ and there exists $j\ge \depth(X)$ such that $\Ext^n_R(M,X)=0$ for all $n= j+1, \dots, j+\mu^j_R(X)$, then $\id_R(X)< \infty$.
     \item 
     If $ \lambda(M) > 2 \type(M) $ and there exists $j\ge 0$ such that $\Tor^R_n(X,M)=0$ for all $n= j+1, \ldots, j+\beta_j^{R}(X)$, then $\pd_R(X) < \infty$.
     \item If $\lambda(M)>2\type(M)$, $X$ has finite length, and there exists $j\ge 0$ such that $\Ext_R^n(M,X)=0$ for all $n= j+1, \dots, j+\mu^j_R(X)$, then $\id_R(X)< \infty$.
 \end{enumerate} 
\end{proposition}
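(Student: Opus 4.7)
The plan for all four parts is to exploit, in parallel, two short exact sequences whose outer terms are $k$-vector spaces by virtue of $\fm^2 M = 0$:
\[
0 \to \fm M \to M \to M/\fm M \to 0, \qquad 0 \to \Soc M \to M \to M/\Soc M \to 0,
\]
with $\fm M \cong k^a$ ($a := \lambda(\fm M) = \lambda(M)-\mu(M)$), $M/\fm M \cong k^b$ ($b := \mu(M)$), $\Soc M \cong k^s$ ($s := \type(M)$), and $M/\Soc M \cong k^t$ ($t := \lambda(M) - \type(M)$; a $k$-vector space because $\fm M \subseteq \Soc M$). Then $\lambda(M) > 2\mu(M)$ reads $a > b$, $\lambda(M) < 2\mu(M)$ reads $a < b$, and $\lambda(M) > 2\type(M)$ reads $t > s$.

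For (1), apply $\Ext_R(X,-)$ to the first SES and use $\Ext^n_R(X,M) = 0$ in the specified range; the long exact sequence produces surjections $\Ext^{n-1}_R(X, k^b) \twoheadrightarrow \Ext^n_R(X,k^a)$, equivalently $a\,\beta^R_n(X) \le b\,\beta^R_{n-1}(X)$. With $a > b$, this forces the integer drop $\beta^R_n(X) \le \beta^R_{n-1}(X) - 1$ whenever $\beta^R_n(X) > 0$; iterating through the range of length $\beta^R_j(X)$ yields $\beta^R_{j+\beta^R_j(X)}(X) = 0$, so $\pd_R X < \infty$. Part (3) follows by the same template with the second SES and $\Tor_R(X,-)$: vanishing of $\Tor^R_n(X,M)$ yields injections $\Tor^R_n(X,k^t) \hookrightarrow \Tor^R_{n-1}(X,k^s)$, hence $t\,\beta^R_n(X) \le s\,\beta^R_{n-1}(X)$ with $t > s$, and the same iteration gives $\pd_R X < \infty$. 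Both arguments strictly parallel the proofs of \Cref{prop:lambda-mu-id-pd} and \Cref{prop:lambda-bass-pd}.

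For (2), apply $\Ext_R(-,X)$ to the first SES: vanishing of $\Ext^n_R(M,X)$ produces surjections $\Ext^{n-1}_R(k^a,X) \twoheadrightarrow \Ext^n_R(k^b,X)$, equivalently $b\,\mu^n_R(X) \le a\,\mu^{n-1}_R(X)$; since $a < b$, iteration produces some $\mu^n_R(X) = 0$ with $n > \depth(X)$, whence $\id_R X < \infty$ follows from a standard no-gap theorem for Bass numbers — the fact that a nonzero finitely generated module has $\mu^n_R(X) > 0$ throughout $[\depth X, \id_R X]$. Part (4) runs analogously with the second SES and $\Ext_R(-,X)$: one obtains $t\,\mu^n_R(X) \le s\,\mu^{n-1}_R(X)$ with $t > s$; iteration produces $\mu^n_R(X) = 0$ with $n > 0 = \depth(X)$ (since $X$ has finite length), and the no-gap theorem again gives $\id_R X < \infty$. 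Alternatively, (4) reduces cleanly to (3) via Matlis duality of $X$, using $\Ext^n_R(M,X)^\vee \cong \Tor^R_n(M,X^\vee)$ together with $\beta^R_j(X^\vee) = \mu^j_R(X)$.

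The principal technical subtlety lies in parts (2) and (4): bridging from finitely many Bass-number vanishings to $\id_R X < \infty$ rests on the no-gap theorem for Bass numbers, a classical but nontrivial result. Parts (1) and (3) use only the strict-drop iteration already employed in \Cref{prop:lambda-mu-id-pd} and \Cref{prop:lambda-bass-pd}.
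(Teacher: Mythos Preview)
Your proof is correct and follows the same strict-drop iteration strategy as the paper. The only differences are cosmetic: the paper outsources the key inequalities (your $a\beta_n \le b\beta_{n-1}$, etc.) to \cite[Lem.~6.1]{DGS}---whose proof is precisely your short-exact-sequence argument---and for part (3) the paper reduces to part (1) via the Matlis dual $M^\vee$ (using $\type(M)=\mu(M^\vee)$ and $\Tor^R_i(X,M)^\vee\cong\Ext^i_R(X,M^\vee)$) rather than working directly with the socle sequence as you do; your direct route is equally short and avoids the duality step.
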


\begin{proof}
(1)	If possible, assume that $\pd_R(X) = \infty$. Then $\beta_m^R(X) \neq 0$ for all $m\ge 0$. Since $\lambda(M) > 2 \mu(M)$, it follows that $\frac{\mu(M)}{\lambda(M)-\mu(M)}< 1$. So, by \cite[Lem.~6.1.(1)]{DGS}, one obtains that $\beta_{n}^R(X) < \beta_{n-1}^R(X)$, i.e., $\beta_{n}^R(X) \le \beta_{n-1}^R(X)-1$ for all $n= j+1, \dots, j+\beta_j^R(X)$. Hence, setting $s:= j+\beta_j^R(X)$,
\[
   \beta_{s}^R(X)\le \beta_{s-(s-j)}^R(X)-(s-j) = \beta_{j}^R(X)-\beta_{j}^R(X) = 0.
\]
Thus $\beta_{s}^R(X) = 0$, which is a contradiction. Therefore $\pd_R(X)< \infty$.

(2) If possible, assume that $\id_R(X)=\infty$. Then, by \cite[II.~Thm.~2]{Rob76}, $\mu^m_R(X)\neq 0$ for all $m\ge \depth(X)$. 
Since $\lambda(M) < 2 \mu(M)$, it follows that $\frac{\lambda(M)}{\mu(M)}-1< 1$. So, in view of \cite[Lem.~6.1.(2)]{DGS}, $\mu^{n}_R(X) < \mu^{n-1}_R(X)$, i.e., $\mu^{n}_R(X) \le \mu^{n-1}_R(X)-1$ for all $n= j+1, \dots, j+\mu^j_R(X)$. Hence, setting $s:= j+\mu^j_R(X)$,
\[
   \mu^{s}_R(X)\le \mu^{s-(s-j)}_R(X)-(s-j) = \mu^{j}_R(X)-\mu^{j}_R(X) = 0.
\]
Thus $\mu^{s}_R(X) = 0$, which is a contradiction as $s= j+\mu^j_R(X)\ge j\ge \depth(X)$. So $\id_R(X)< \infty$. 

(3) Note that $\fm^2(M^{\vee})=0$, $\lambda(M)=\lambda(M^{\vee})$ and $\type(M)=\mu(M^{\vee})$. Hence, since $\Tor^R_i(X,M)^{\vee}\cong \Ext_R^{i}(X,M^{\vee})$ for all $i\ge 0$, the result follows by using (1) for $M^{\vee}$ and $X$.

(4) Since $\lambda(X) < \infty$, note that $\Ext^n_R(M,X)^{\vee} \cong \Tor^R_n(M, X^{\vee})$ for all $n \ge 0$. This yields that $\id_R(X)<\infty$ \iff $\pd_R(X^{\vee})<\infty$. Moreover, $\mu^n_R(X)=\beta_n^R(X^{\vee})$ for all $n\ge 0$. Therefore the desired result follows by using (3) for $M$ and $X^{\vee}$.
\end{proof}

We need the following elementary lemma about vanishing of Ext and Tor.

\begin{lemma}\cite[Lem.~2.3]{DG23}\label{lem:cons-vanishing-Tor}
	Consider two integers $m$ and $n$. Let ${\bf x} = x_1,\ldots,x_t$ be an $M$-regular sequence of length $t$. Then the following hold true.
	\begin{enumerate}[\rm (1)]
		\item If $\Tor^R_i(M,N)=0$ for $n\le i\le m+n$, then $\Tor^R_i(M/{\bf x} M,N)=0$ for $n+t\le i\le m+n$.
		\item   If $\Ext^i_R(M,N)=0$ for $n\le i \le m+n$, then  $\Ext^i_R(M/{\bf x} M,N) = 0$ for all $n+t\le i\le m+n$.		
		\item  If $\Ext^i_R(N,M)=0$ for all $n\le i\le m+n$, then $\Ext^i_R(N,M/{\bf x} M)=0$ for all $n\le i\le m+n-t$.   
	\end{enumerate} 
\end{lemma}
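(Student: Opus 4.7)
The plan is to prove all three parts simultaneously by induction on the length $t$ of the regular sequence, with the base case $t=1$ extracted from the standard long exact sequences associated to the short exact sequence
\[
0 \to M \xrightarrow{x_1} M \to M/x_1 M \to 0,
\]
which is exact because $x_1$ is $M$-regular.

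For the base case $t=1$ of part (1), I would apply $-\otimes_R N$ to the above sequence and read off the long exact Tor sequence. The segment
\[
\Tor^R_j(M,N) \xrightarrow{x_1} \Tor^R_j(M,N) \to \Tor^R_j(M/x_1 M, N) \to \Tor^R_{j-1}(M,N) \xrightarrow{x_1} \Tor^R_{j-1}(M,N)
\]
shows that $\Tor^R_j(M/x_1 M, N)$ vanishes whenever both $\Tor^R_j(M,N)$ and $\Tor^R_{j-1}(M,N)$ vanish. Given the hypothesis that $\Tor^R_i(M,N)=0$ on $n\le i\le m+n$, this yields vanishing on the range $n+1 \le j \le m+n$, which is the $t=1$ version of (1). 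Part (2) is proved similarly by applying $\Hom_R(-,N)$: the module $\Ext^j_R(M/x_1 M, N)$ is sandwiched between $\Ext^{j-1}_R(M,N)$ and $\Ext^j_R(M,N)$, again shaving the lower endpoint of the vanishing interval. For part (3), applying $\Hom_R(N,-)$ gives instead
\[
\Ext^j_R(N,M) \xrightarrow{x_1} \Ext^j_R(N,M) \to \Ext^j_R(N, M/x_1 M) \to \Ext^{j+1}_R(N,M) \xrightarrow{x_1} \Ext^{j+1}_R(N,M),
\]
so $\Ext^j_R(N, M/x_1 M)$ is controlled by $\Ext^j_R(N,M)$ and $\Ext^{j+1}_R(N,M)$; this time the upper endpoint is what gets shaved, giving vanishing on $n\le j \le m+n-1$.

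For the inductive step, I would use the standard fact that $x_2,\ldots,x_t$ remains a regular sequence on $M/x_1 M$, and apply the inductive hypothesis to $M/x_1 M$ with this sequence of length $t-1$. Combining with the base case on $x_1$ shaves off a total of $t$ indices from the appropriate endpoint, giving precisely the ranges $n+t \le i \le m+n$ in (1) and (2), and $n \le i \le m+n-t$ in (3).

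The only real obstacle is careful bookkeeping of the endpoints: one must keep straight, in each of the three cases, whether the long exact sequence shaves the bottom or the top of the vanishing interval, since this dictates the direction of the $t$-shift. No deeper homological input is required beyond the exactness of the connecting maps and the elementary observation that multiplication by $x_1$ on the relevant Ext/Tor modules is precisely the map appearing in the long exact sequences derived from multiplication by $x_1$ on $M$.
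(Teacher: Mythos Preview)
Your proof is correct and is precisely the standard argument. Note, however, that the paper does not actually prove this lemma: it is stated with a citation to \cite[Lem.~2.3]{DG23} and no proof is given in the present paper. Your induction on $t$ via the long exact sequences associated to $0\to M\xrightarrow{x_1} M\to M/x_1M\to 0$ is exactly how such a result is established, and your bookkeeping of which endpoint is shaved in each of the three cases is accurate.
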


The following lemma gives a relation between Bass numbers of a module $M$ and that of $M/{\bf x}M$, where ${\bf x}$ is an $M$-regular sequence.

\begin{lemma}\label{lem:mu-C}
   Let $M$ be an $R$-module, and ${\bf x}=x_1,\ldots, x_s$ be an $M$-regular sequence of length $s$. Then, for each $j\ge 0$, $\mu^j_R\big(M/({\bf x})M \big)= \sum_{i=0}^s {s \choose i} \mu^{j+i}_R(M)$, where ${ s \choose i } = s!/i!(s-i)!$.
\end{lemma}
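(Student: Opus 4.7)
The plan is to proceed by induction on $s$, the length of the regular sequence. The base case $s=1$ provides the key computational input, and the inductive step reduces to Pascal's identity.

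For $s=1$, I would apply $\Hom_R(k,-)$ to the short exact sequence $0 \to M \xrightarrow{x_1} M \to M/x_1M \to 0$. Since $x_1 \in \fm$ and the $R$-action on $\Ext^j_R(k,M)$ from the first argument factors through $\End_R(k) = k$, multiplication by $x_1$ acts as zero on each $\Ext^j_R(k,M)$. Consequently the long exact sequence of Ext breaks into short exact sequences
\[
0 \to \Ext^j_R(k,M) \to \Ext^j_R(k, M/x_1 M) \to \Ext^{j+1}_R(k,M) \to 0
\]
of finite-dimensional $k$-vector spaces for every $j \ge 0$. Taking $k$-dimensions yields
\[
\mu^j_R(M/x_1 M) = \mu^j_R(M) + \mu^{j+1}_R(M) = {1 \choose 0}\mu^{j}_R(M) + {1 \choose 1}\mu^{j+1}_R(M),
\]
establishing the case $s=1$.

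For the inductive step, assume the formula for regular sequences of length $s-1$, and set $N := M/(x_1,\ldots,x_{s-1})M$. Since $x_1,\ldots,x_s$ is $M$-regular, $x_s$ is $N$-regular, so applying the base case to $N$ gives $\mu^j_R(M/({\bf x})M) = \mu^j_R(N/x_s N) = \mu^j_R(N) + \mu^{j+1}_R(N)$. Expanding each term via the inductive hypothesis and reindexing, the coefficient of $\mu^{j+i}_R(M)$ becomes ${s-1 \choose i} + {s-1 \choose i-1}$ (with the convention that out-of-range binomials vanish, which correctly handles the boundary cases $i=0$ and $i=s$), and this equals ${s \choose i}$ by Pascal's identity. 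The only subtle point is justifying that multiplication by $x_1$ is zero on $\Ext^j_R(k,M)$ in the base case; once this is in hand, the remainder is straightforward binomial bookkeeping.
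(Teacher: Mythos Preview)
Your proof is correct and follows essentially the same approach as the paper: induction on $s$, with the base case $s=1$ obtained by applying $\Hom_R(k,-)$ to $0 \to M \xrightarrow{x_1} M \to M/x_1M \to 0$ and using that $x_1\in\fm$ annihilates each $\Ext^j_R(k,M)$, and the inductive step reduced to Pascal's identity. The paper's argument is identical in structure, differing only in that it also notes the trivial case $s=0$ and writes out the binomial reindexing explicitly.
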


\begin{proof}
    When $s=0$, there is nothing to prove. So we consider the base case $s=1$. There is an exact sequence $0 \to M \stackrel{x_1} \longrightarrow M \to M/{x_1}M \to 0$, which induces another exact sequence $0 \to \Ext^j_R(k,M) \to \Ext^j_R(k,M/x_1M) \to \Ext^{j+1}_R(k,M) \to 0$. This implies that $\mu^j_R(M/x_1M)=\mu^j_R(M)+\mu^{j+1}_R(M)=\sum_{i=0}^1 { 1 \choose i } \mu^{j+i}_R(M)$. By the induction hypothesis, we may assume that the result holds for $s-1$, i.e., $\mu^j_R\big(M/({\bf y})M \big)= \sum_{i=0}^{s-1} {s-1 \choose i} \mu^{j+i}_R(M)$, where ${\bf y} := x_1,\ldots, x_{s-1} $. Note that $x_s$ is a regular element on $M' := M/{\bf y}M$. So, by the base case, $ \mu^j_R \big( M/({\bf x})M \big)= \mu^j_R(M'/x_s M') = \mu^j_R(M')+\mu^{j+1}_R(M')= \sum_{i=0}^{s-1} {s-1 \choose i} \mu^{j+i}_R(M) + \sum_{i=0}^{s-1} {s-1 \choose i} \mu^{j+i+1}_R(M) = \mu^j_R(M) + \left( \sum_{i=1}^{s-1} \left\{{s-1 \choose i} + {s-1 \choose i-1}\right\} \mu^{j+i}_R(M) \right) + \mu^{j+s}_R(M) = \sum_{i=0}^s {s \choose i} \mu^{j+i}_R(M)$.
\end{proof}

Now we are in a position to prove that every CM module $M$ such that $e(M)< 2 \mu(M)$ is Tor-pd-test for arbitrary modules as well as it is Ext-id-test for CM modules. We remark that such a module $M$ is a $c$-Ulrich module, for every $c<2$, in the sense of \cite{CCLTY}. We also note that our result extends similar test properties already known for Ulrich modules \cite{GP19,DG23}. 

\begin{theorem}\label{thm:e-mu-pd-id}
    Let $M$ be a CM $R$-module such that $e(M) < 2\mu(M) $. Let $N$ be an $R$-module.
    \begin{enumerate}[\rm (1)]
		\item 
        If there exists $j \ge0$ such that $\Tor^R_n(M,N)=0$ for all $n=j-\dim(M)+1,\ldots,j+\beta_j^{R}(N)$, then $\pd_R(N)< \infty$. In particular, if $\Tor^R_{\gg 0}(M,N)=0$, then $\pd_R(N)< \infty$.
		% \item If $\lambda(N) < \infty$ and there exists $j\ge \dim(M)$ such that $\Ext_R^n(M,N)=0$ for all $n= j-\dim(M)+1, \ldots, j+\mu_R^j(N)$, then $\id_R(N)< \infty$. In particular, if $\Ext_R^{\gg 0}(M,N)=0$, then $\id_R(N)< \infty$.
        \item 
        Suppose that $N$ is CM of dimension $s$. If there exists $j\ge 0$ such that $\Ext_R^n(M,N)=0$ for all $n= j-\dim(M)+1, \ldots, j+s+\sum_{i=0}^s {s \choose i} \mu^{j+i}_R(N)$, then $\id_R(N)< \infty$. In particular, if $\Ext_R^{\gg 0}(M,N)=0$, then $\id_R(N)< \infty$.
        \item 
        Suppose that $R$ is CM of dimension $d$. If there exists $j\ge \dim(M)+d$ such that $\Ext_R^n(M,N)=0$ for all $n= j-\dim(M)+1, \ldots, j+d+\sum_{i=0}^d {d \choose i} \mu^{j+i}_R(N)$, then $\id_R(N)< \infty$. In particular, if $\Ext_R^{\gg 0}(M,N)=0$, then $\id_R(N)< \infty$.
	\end{enumerate}
\end{theorem}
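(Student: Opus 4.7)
The strategy for all three parts is to reduce to \Cref{prop:lambda-mu-id-pd} by killing suitable regular sequences. First I would pass to the faithfully flat extension $R[x]_{\fm R[x]}$ so that the residue field becomes infinite; all data in the hypothesis and conclusion is preserved under this change by \Cref{rmk:flat-ext-CM} and \Cref{rmk:flat-ext-Ulrich}. Then, by \Cref{lem:min-mult-mod}(3), I would choose an $M$-regular sequence $\mathbf{x}$ of length $r = \dim M$ that is a minimal reduction of $\fm$ with respect to $M$; the quotient $M' := M/\mathbf{x}M$ satisfies $\lambda(M') = e(M)$ and $\mu(M') = \mu(M)$, and hence $\lambda(M') < 2\mu(M')$. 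For part~(1), \Cref{lem:cons-vanishing-Tor}(1) propagates the Tor-vanishing through $\mathbf{x}$ to give $\Tor^R_i(M', N) = 0$ for $i = j+1, \ldots, j+\beta_j^R(N)$, and \Cref{prop:lambda-mu-id-pd}(1) applied to $(M', N)$ delivers $\pd_R N < \infty$.

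For part~(2), \Cref{lem:cons-vanishing-Tor}(2) transports the Ext-vanishing to $M'$, producing $\Ext_R^i(M', N)=0$ in the range $i = j+1,\ldots, j+s+\sum_{i=0}^s\binom{s}{i}\mu_R^{j+i}(N)$. I would then further kill an $N$-regular sequence $\mathbf{y} = y_1, \ldots, y_s$ of length $s = \dim N$: iterating the long exact sequences arising from $0 \to N_k \xrightarrow{y_{k+1}} N_k \to N_{k+1} \to 0$ (with $N_0 := N$, $N_{k+1} := N_k/y_{k+1}N_k$) applied to $\Ext_R^*(M', -)$ reduces the upper end of the vanishing range by one at each step, yielding $\Ext_R^i(M', N/\mathbf{y}N) = 0$ for $i = j+1, \ldots, j + \sum_{i=0}^s \binom{s}{i}\mu_R^{j+i}(N)$, which by \Cref{lem:mu-C} equals $[j+1,\, j + \mu_R^j(N/\mathbf{y}N)]$. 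Since $N/\mathbf{y}N$ has finite length, \Cref{prop:lambda-mu-id-pd}(2) now forces $\id_R(N/\mathbf{y}N) < \infty$, whence $\id_R N < \infty$ by the standard fact that finite injective dimension descends along regular sequences.

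For part~(3), I would reduce to part~(2) by replacing $N$ with its $d$-th syzygy $L := \Omega_R^d N$, which by the depth lemma is MCM of dimension $d$ whenever $L \neq 0$. The shift isomorphisms $\Ext_R^m(M, L) \cong \Ext_R^{m+d}(M, N)$ and $\mu_R^m(L) = \mu_R^{m+d}(N)$ for $m \geq 1$, together with $\mu_R^0(L) = \dim_k \Soc(L) = 0$ when $d \geq 1$, translate the hypothesis on $(M, N)$ into the hypothesis of~(2) for $(M, L)$ with $j' := j - d \geq \dim M$, producing $\id_R L < \infty$. To pass back to $\id_R N < \infty$, Bass's classification of MCM modules of finite injective dimension (after completing $R$ if needed, to obtain a canonical module) forces $L \cong \omega_R^a$; since $L$ is simultaneously a syzygy, $R$ must be Gorenstein, so $\id_R F_i = \id_R R < \infty$, and successive chasing of the long exact sequences on $\Ext_R^*(k,-)$ along $0 \to L \to F_{d-1} \to \cdots \to F_0 \to N \to 0$ gives $\id_R N < \infty$. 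The edge case $L = 0$, i.e. $\pd_R N < d$, I would dispatch by applying~(2) to $(M, R)$ instead: with $p := \pd_R N$, the module $F_p = \Omega_R^p N$ is a nonzero free summand, and $\Ext_R^{i+p}(M, N) = 0$ for all $i \geq 1$ forces $\Ext_R^i(M, R) = 0$ for all $i \geq 1$, whence~(2) delivers $R$ Gorenstein and thus $\id_R N < \infty$. The main obstacle is precisely this return step from $\id_R L$ to $\id_R N$: it relies on Bass's theorem to first establish Gorenstein-ness of $R$, since over a general non-Gorenstein CM ring finite injective dimension need not descend from a syzygy to the module itself; some care is also required in the index bookkeeping to verify the translated Ext-range contains the range needed by~(2), in particular handling the $i = 0$ contribution via $\mu_R^0(L) = 0$ when $j' = 0$.
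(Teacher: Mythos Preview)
Your arguments for parts~(1) and~(2) are correct and coincide with the paper's proof; the only difference is the order in which you kill the two regular sequences in~(2), which is immaterial.

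Part~(3), however, has a genuine gap. The ``shift isomorphisms'' $\Ext_R^m(M,\Omega_R^d N)\cong\Ext_R^{m+d}(M,N)$ and $\mu_R^m(\Omega_R^d N)=\mu_R^{m+d}(N)$ that you invoke are false in general. Dimension-shifting along a projective syzygy $0\to\Omega^{i+1}N\to F_i\to\Omega^iN\to 0$ works in the \emph{first} argument of $\Ext$, not the second: applying $\Hom_R(M,-)$ yields a long exact sequence in which the terms $\Ext_R^m(M,F_i)\cong\Ext_R^m(M,R)^{\oplus\beta_i}$ need not vanish, so neither the Ext-vanishing nor the Bass numbers transfer from $N$ to $\Omega_R^d N$. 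The same obstruction breaks your edge-case argument: from the finite range of vanishing of $\Ext_R^n(M,N)$ you cannot extract $\Ext_R^i(M,R)=0$ for all $i\ge 1$.

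The paper avoids this by replacing $N$ not with a syzygy but with its minimal MCM approximation $0\to Y\to N'\to N\to 0$ (after completing so that a canonical module exists), where $N'$ is MCM and $\id_R Y<\infty$. Because $\id_R Y=d$, one has $\Ext_R^n(X,Y)=0$ for all $n>d$, and hence $\Ext_R^n(X,N')\cong\Ext_R^n(X,N)$ for all $n>d$ and every $X$; in particular this gives both the required Ext-vanishing for $(M,N')$ and the equality $\mu_R^{j+i}(N')=\mu_R^{j+i}(N)$ for $j\ge d$, $i\ge 0$. One then applies part~(2) directly to the MCM module $N'$, and $\id_R N'<\infty$ together with $\id_R Y<\infty$ immediately gives $\id_R N<\infty$ from the approximation sequence---no detour through Gorensteinness is needed.
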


\begin{proof}
	Without loss of generality, we may assume that $k$ is infinite. Then, by a similar argument as in \cite[Rmk.~2.4]{DG23}, there exists a system of parameters ${\bf x}$ of $M$ such that $e(M) = \lambda(M/{\bf x}M)$. Since $M$ is CM, the sequence ${\bf x}$ is $M$-regular. Set $L:=M/{\bf x}M$. Then $e(M) = \lambda(L)$ and $\mu(M) = \mu(L)$. Thus $\lambda(L) < 2 \mu(L)$.
	
    (1) If there exists $j \ge 0$ such that $\Tor^R_n(M,N)=0$ for all $n=j-\dim(M)+1,\ldots,j+\beta_j^{R}(N)$, then by Lemma~\ref{lem:cons-vanishing-Tor}.(1), $\Tor^R_n(L, N) = 0$ for all $n=j+1,\ldots,j+\beta_j^{R}(N)$, and hence $\pd_R(N)< \infty$ by Proposition~\ref{prop:lambda-mu-id-pd}.(1). The particular case follows from the general one.
 % 
    % (2) If there exists $j\ge \dim(M)$ such that $\Ext_R^n(M,N)=0$ for all $n=j-\dim(M)+1, \ldots, j+\mu_R^j(N)$, then by Lemma~\ref{lem:cons-vanishing-Tor}.(2), $\Ext_R^n(L, N) = 0$ for all $n=j+1, \ldots, j+\mu_R^j(N)$, and hence $ \id_R(N) < \infty $ by Lemma~\ref{prop:lambda-mu-id-pd}.(2).

    (2) Since $N$ is CM of dimension $s$, there exists an $N$-regular sequence ${\bf y} = y_1,\ldots,y_s$ such that $\lambda(N/{\bf y}N)<\infty$. Then, in view of Lemmas~\ref{lem:cons-vanishing-Tor}.(3) and \ref{lem:mu-C}, from the given vanishing condition, one obtains that $\Ext_R^n(M,N/{\bf y}N)=0$ for all $n= j-\dim(M)+1, \ldots, j+\mu^j_R \big( N/{\bf y}N \big)$, and hence $\Ext_R^n(L,N/{\bf y}N)=0$ for all $n= j+1, \ldots, j+\mu^j_R \big( N/{\bf y}N \big)$ (by Lemma~\ref{lem:cons-vanishing-Tor}.(2)). Therefore, since $\lambda(L) < 2 \mu(L)$ and $\lambda(N/{\bf y}N)<\infty$, \Cref{prop:lambda-mu-id-pd}.(2) yields that $ \id_R(N/{\bf y}N) < \infty $, which implies that $ \id_R(N) < \infty $.

    (3) We may pass to completion, and assume that $R$ is complete. Hence $R$ admits a canonical module. Take a minimal MCM approximation $0\to Y\to N'\to N\to 0$ of $N$ as guaranteed by \cite[11.13, 11.14 and 11.17]{LW}, where $N'$ is MCM and  $Y$ has finite injective dimension. It follows that $\Ext^n_R(X,N')\cong \Ext^n_R(X,N)$ for every $R$-module $X$ and for all $n>d$, moreover, when $X=k$, $\Ext^d_R(k,N)\cong \Ext^d_R(k,N')$ also holds by the discussion made in \cite[p.183]{LW}. So $\mu^{j+i}_R(N')=\mu^{j+i}_R(N)$ for all $j\ge d$ and $i\ge 0$. Also, in our hypothesis, $n\ge j-\dim(M)+1\ge d+1$. Thus, we may replace $N$ by $N'$, and assume that $N$ is MCM. Hence the result follows from (2).
\end{proof}

Next we show that every nonzero CM module $M$ such that $e(M) < 2 \type(M)$ is Ext-pd-test.

\begin{theorem}\label{thm:pd-e-type-test}
    Let $M$ be a nonzero CM $R$-module such that $e(M) < 2 \type (M)$. Let $N$ be an $R$-module. If there exists $j \ge 0$ such that $\Ext^n_R(N,M)=0$ for all $n= j+1, \ldots, j+\beta_j^{R}(N)+\dim(M)$, then $\pd_R(N) < \infty$. In particular, if $\Ext_R^{\gg 0}(N,M)=0$, then $\pd_R(N)<\infty$.
\end{theorem}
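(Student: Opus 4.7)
The plan is to mimic the reduction step used in the proof of Theorem \ref{thm:e-mu-pd-id}, but to invoke Proposition \ref{prop:lambda-bass-pd} (rather than Proposition \ref{prop:lambda-mu-id-pd}) at the end. The slogan is: kill an appropriately chosen system of parameters of $M$ to replace $M$ by a finite-length module $L$ whose length and type retain the salient numerical data, and then apply the already established Ext-pd-test result for finite-length modules with $\lambda(L) < 2\type(L)$.

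First I would reduce to the case where the residue field $k$ is infinite by replacing $(R, \fm)$ with the faithfully flat extension $R' := R[x]_{\fm[x]}$ (and $M$, $N$ by their extensions). By Remark \ref{rmk:flat-ext-CM} this preserves $e(M)$, $\mu(M)$, $\type(M)$, $\dim(M)$, $\depth$, and the Betti numbers $\beta_j^R(N)$; the Ext-vanishing hypothesis transfers by flatness; and $\pd_R(N) < \infty$ iff $\pd_{R'}(N \otimes_R R') < \infty$. Next, by Lemma \ref{lem:min-mult-mod}.(3) combined with \ref{lem:min-mult-mod}.(2), there is a system of parameters $\mathbf{x}$ of $M$ (of length $r := \dim(M)$) which is a reduction of $\fm$ with respect to $M$; this sequence is $M$-regular because $M$ is CM, and setting $L := M/\mathbf{x}M$ we have $\lambda(L) = e(M)$. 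A standard computation using the short exact sequences
\[
0 \to \Ext^i_R(k, M_{j-1}) \to \Ext^i_R(k, M_j) \to \Ext^{i+1}_R(k, M_{j-1}) \to 0,
\]
where $M_j := M/(x_1,\ldots,x_j)M$ and each $x_j$ acts as zero on $\Ext^*_R(k,-)$, together with $\mu^n_R(M) = 0$ for $n < r$, yields $\type(L) = \type(M)$ by induction on $j$. Consequently $\lambda(L) = e(M) < 2\type(M) = 2\type(L)$.

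Finally, the hypothesis $\Ext^n_R(N, M) = 0$ for $n = j+1, \ldots, j + \beta_j^R(N) + r$ together with Lemma \ref{lem:cons-vanishing-Tor}.(3) (applied to the $M$-regular sequence $\mathbf{x}$ of length $r$) gives $\Ext^n_R(N, L) = 0$ for $n = j+1, \ldots, j + \beta_j^R(N)$. Proposition \ref{prop:lambda-bass-pd} then applies directly to $N$ and $L$ to conclude that $\pd_R(N) < \infty$. The ``in particular'' assertion follows by choosing $j$ large enough that $\Ext^n_R(N, M) = 0$ for all $n \ge j + 1$.

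The only potentially delicate step is the identity $\type(M/\mathbf{x}M) = \type(M)$ for an $M$-regular sequence $\mathbf{x}$ of length $\dim(M)$, but this is a classical consequence of the long-exact-sequence argument sketched above, so no genuine technical obstacle is expected; everything else is a direct parallel to the proof of Theorem \ref{thm:e-mu-pd-id}.(1).
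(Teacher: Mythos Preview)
Your proof is correct and follows essentially the same route as the paper: pass to infinite residue field, reduce $M$ modulo a maximal $M$-regular sequence $\mathbf{x}$ with $e(M)=\lambda(L)$, note $\type(L)=\type(M)$, transfer the Ext vanishing via Lemma~\ref{lem:cons-vanishing-Tor}.(3), and invoke Proposition~\ref{prop:lambda-bass-pd}. One small correction: your appeal to Lemma~\ref{lem:min-mult-mod}.(2) is misplaced, since that part assumes $\fm^2 M=(\mathbf{x})\fm M$, which is not given here; the paper instead cites \cite[Rmk.~2.4]{DG23} for the existence of $\mathbf{x}$ with $e(M)=\lambda(M/\mathbf{x}M)$ (the standard fact for a reduction and a CM module), and cites \cite[Lem.~1.2.4]{BH98} for the type identity you prove by hand.
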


\begin{proof}
     As in the proof of Theorem~\ref{thm:e-mu-pd-id}, set $L:=M/{\bf x}M$, where ${\bf x}$ is an $M$-regular sequence such that $e(M) = \lambda(L)$. Then, by \cite[Lem.~1.2.4.]{BH98}, $\type(M)= \type(L)$. So $0< \lambda(L) < 2 \type(L)$. Thus, if there exists $j \ge 0$ such that $\Ext^n_R(N,M)=0$ for all $n= j+1, \ldots, j+\beta_j^{R}(N)+\dim(M)$, then by Lemma~\ref{lem:cons-vanishing-Tor}.(3), $\Ext^n_R(N,L)=0$ for all $n= j+1, \ldots, j+\beta_j^{R}(N)$, and hence $\pd_R(N)<\infty$ by Proposition~\ref{prop:lambda-bass-pd}.
\end{proof}

We now analyze the testing properties of CM modules of minimal multiplicity. The second part of the next theorem shows that one can drop the CM hypothesis on $N$ in Theorem~\ref{thm:e-mu-pd-id}.(2) at the expense of assuming $M$ has minimal multiplicity.

\begin{theorem}\label{thm:Ext-pd-id-test}
    Let $M$ and $N$ be nonzero $R$-modules such that $M$ has minimal multiplicity.
    \begin{enumerate}[\rm (1)]
        \item 
        Let $e(M)>2\mu(M)$. If there exists $j\ge 0$ such that $\Ext_R^n(N,M)=0$ for all $n= j+1, \ldots, j+\beta_j^{R}(N)+\dim(M)$, then $\pd_R(N) < \infty$.
        % In particular, if $\Ext_R^{\gg 0}(N,M)=0$, then $\pd_R(N)<\infty$.
        \item 
        Let $e(M)<2\mu(M)$. If there exists $j\ge \depth(N)$ such that $\Ext_R^n(M,N)=0$ for all $n= j-\dim(M)+1, \ldots, j+\mu_R^j(N)$, then $\id_R(N) < \infty$.
        % In particular, if $\Ext_R^{\gg 0}(M,N)=0$, then $\id_R(N)<\infty$.
        \item \label{thm:Ext-pd-id-test-3}
        Let $e(M)>2\type(M)$. If there exists $j \ge 0$ such that $\Tor^R_n(M,N)=0$ for all $n=j-\dim(M)+1,\ldots,j+\beta_j^{R}(N)$, then $\pd_R(N)< \infty$.
        % In particular, if $\Tor^R_{\gg 0}(M,N)=0$, then $\pd_R(N)< \infty$.
     %   \item If $e(M)>2\type(M)$, $N$ has finite length, and there exists $j \ge 0$ such that $\Ext_R^n(M,N)=0$ for all $n= j-\dim(M)+1, \dots, j+\mu^j_R(N)$, then $\id_R(N)< \infty$. In particular, if $\Ext_R^{\gg 0}(M,N)=0$, then $\id_R(N)<\infty$.
        \item \label{thm:Ext-pd-id-test-4}
        Let $e(M)>2\type(M)$. If $N$ is CM of dimension $s$, and there exists $j \ge 0$ such that $\Ext_R^n(M,N)=0$ for all $n= j-\dim(M)+1, \dots, j+s+\sum_{i=0}^s {s \choose i} \mu^{j+i}_R(N)$, then $\id_R(N)< \infty$.
        % In particular, if $\Ext_R^{\gg 0}(M,N)=0$, then $ id_R(N)<\infty$.
        \item Suppose that $R$ is CM of dimension $d$, and $e(M)>2\type(M)$. If there exists $j \ge \dim (M)+d$ such that $\Ext_R^n(M,N)=0$ for all $n= j-\dim(M)+1, \dots, j+d+\sum_{i=0}^d {d \choose i} \mu^{j+i}_R(N)$, then $\id_R(N)< \infty$.
    \end{enumerate}
\end{theorem}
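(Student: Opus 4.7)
The plan is to reduce this part to part (4) of the theorem via a minimal MCM approximation, in exact analogy with the passage from part (2) to part (3) of \Cref{thm:e-mu-pd-id}.

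First I would pass to the $\fm$-adic completion $\widehat{R}$. By \Cref{rmk:flat-ext-CM} together with standard faithful flatness, the hypotheses on $M$ (nonzero, minimal multiplicity, $e(M)>2\type(M)$), the Bass numbers $\mu_R^n(N)$, the vanishing of $\Ext_R^n(M,N)$, and the conclusion $\id_R(N)<\infty$ are all preserved. Since $\widehat{R}$ is complete and CM, it admits a canonical module; so we may assume $R$ itself is complete CM with canonical module. By \cite[11.13, 11.14 and 11.17]{LW}, $N$ then admits a minimal MCM approximation
\[
    0 \to Y \to N' \to N \to 0,
\]
with $N'$ MCM and $\id_R(Y)<\infty$, so $\id_R(Y)\le d$. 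In particular, $\Ext_R^n(X,Y)=0$ for every $R$-module $X$ and every $n>d$, which yields $\Ext_R^n(X,N')\cong \Ext_R^n(X,N)$ for all $n>d$. Taking $X=k$ gives $\mu_R^n(N')=\mu_R^n(N)$ for $n>d$, and taking $X=M$ gives $\Ext_R^n(M,N')\cong \Ext_R^n(M,N)$ for $n>d$.

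Since $j\ge \dim(M)+d$, every integer $n$ in the range $j-\dim(M)+1\le n\le j+d+\sum_{i=0}^d\binom{d}{i}\mu_R^{j+i}(N)$ satisfies $n\ge d+1>d$. Therefore $\Ext_R^n(M,N')=0$ for all such $n$; and since $j\ge d$ also implies $\mu_R^{j+i}(N')=\mu_R^{j+i}(N)$ for every $i\ge 0$, this vanishing holds precisely in the range $j-\dim(M)+1\le n\le j+d+\sum_{i=0}^d\binom{d}{i}\mu_R^{j+i}(N')$. Now $N'$ is CM of dimension $s=d$, so part (4) of the theorem applies to $N'$ and gives $\id_R(N')<\infty$. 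The displayed short exact sequence, together with $\id_R(Y)<\infty$, then forces $\id_R(N)<\infty$.

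The argument is mostly bookkeeping; the only non-routine point is verifying that both the Ext vanishing and the Bass numbers transfer intact from $N$ to its MCM approximation $N'$ in the precise range required by part (4). This is exactly what the hypothesis $j\ge \dim(M)+d$ (stronger than the bare $j\ge 0$ used in part (4)) is designed to guarantee: the lower endpoint of the vanishing range must exceed $\id_R(Y)$, and the indices of the Bass numbers bounding the upper endpoint must also exceed $\id_R(Y)$.
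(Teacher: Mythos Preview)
Your approach is exactly the paper's: pass to completion, take a minimal MCM approximation $0\to Y\to N'\to N\to 0$, transfer both the Ext vanishing and the Bass numbers from $N$ to $N'$, and invoke part~(4).

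There is one small gap. From the long exact sequence and $\id_R(Y)\le d$ you correctly deduce $\Ext_R^n(X,N')\cong\Ext_R^n(X,N)$ for all $n>d$, hence $\mu_R^n(N')=\mu_R^n(N)$ for $n>d$. But you then assert that $j\ge d$ implies $\mu_R^{j+i}(N')=\mu_R^{j+i}(N)$ for all $i\ge 0$. When $\dim(M)=0$ (which is allowed), one may have $j=d$, and then $j+0=d$ is not strictly greater than $d$; the long exact sequence alone only gives a surjection $\Ext_R^d(k,N')\twoheadrightarrow\Ext_R^d(k,N)$, so a priori $\mu_R^d(N')\ge\mu_R^d(N)$, and the upper endpoint required by part~(4) for $N'$ could exceed the range in which you have established vanishing. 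The paper closes this gap by using the \emph{minimality} of the MCM approximation: by the discussion in \cite[p.~183]{LW}, minimality forces $\Ext_R^d(k,N')\cong\Ext_R^d(k,N)$ as well, so in fact $\mu_R^n(N')=\mu_R^n(N)$ for all $n\ge d$. You invoked the word ``minimal'' but never used this consequence; once you add that one sentence, your argument is complete and identical to the paper's.
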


\begin{proof}
Without loss of generality, we may assume that $k$ is infinite. Then, in view of \Cref{lem:min-mult-mod}, there exists a system of parameters ${\bf x}$ of $M$ such that $\fm^2M = ({\bf x}) \fm M$ and $e(M) = \lambda(M/{\bf x}M)$. Since $M$ is CM, ${\bf x}$ is also an $M$-regular sequence. Setting $L:=M/{\bf x}M$, we have that $\fm^2 L=0$, $e(M) = \lambda(L)$, $\mu(M) = \mu(L)$ and $\type(M) = \type(L)$. 

(1) Under the vanishing condition of (1), by Lemma~\ref{lem:cons-vanishing-Tor}.(3), $\Ext_R^n(N,L)=0$ for all $n= j+1, \ldots, j+\beta_j^{R}(N)$, and hence $\pd_R(N) < \infty$ by \Cref{prop:pd-id-test-m2M-zero}.(1).

(2) Under the vanishing condition of (2), by Lemma~\ref{lem:cons-vanishing-Tor}.(2), $\Ext_R^n(L,N)=0$ for all $n= j+1, \ldots, j+\mu_R^j(N)$, and hence $\id_R(N) < \infty$ by \Cref{prop:pd-id-test-m2M-zero}.(2).

(3) Under the vanishing condition of (3), by Lemma~\ref{lem:cons-vanishing-Tor}.(1), $\Tor^R_n(L,N)=0$ for all $n= j+1, \ldots, j+\beta_j^{R}(N)$, and hence \Cref{prop:pd-id-test-m2M-zero}.(3) yields that $\pd_R(N) < \infty$.

(4) Since $N$ is CM, there exists an $N$-regular sequence ${\bf y} = y_1,\ldots,y_s$ such that $\lambda(N/{\bf y}N)<\infty$. Then, in view of Lemmas~\ref{lem:cons-vanishing-Tor}.(3) and \ref{lem:mu-C}, from the given vanishing condition, one obtains that $\Ext_R^n(M,N/{\bf y}N)=0$ for all $n= j-\dim(M)+1, \ldots, j+\mu^j_R \big( N/{\bf y}N \big)$, and hence $\Ext_R^n(L,N/{\bf y}N)=0$ for all $n= j+1, \ldots, j+\mu^j_R \big( N/{\bf y}N \big)$ (by Lemma~\ref{lem:cons-vanishing-Tor}.(2)). Therefore, since $\lambda(L) > 2 \type(L)$ and $\lambda(N/{\bf y}N)<\infty$, \Cref{prop:pd-id-test-m2M-zero}.(4) yields that $ \id_R(N/{\bf y}N) < \infty $, which implies that $ \id_R(N) < \infty $.

(5) We may pass to completion, and assume that $R$ is complete. Hence $R$ admits a canonical module. Take a minimal MCM approximation $0\to Y\to N'\to N\to 0$ of $N$ as guaranteed by \cite[11.13, 11.14 and 11.17]{LW}, where $N'$ is MCM and  $Y$ has finite injective dimension. It follows that $\Ext^n_R(X,N')\cong \Ext^n_R(X,N)$ for every $R$-module $X$ and for all $n>d$, moreover, when $X=k$, $\Ext^d_R(k,N)\cong \Ext^d_R(k,N')$ also holds by the discussion made in \cite[p.183]{LW}. So $\mu^{j+i}_R(N')=\mu^{j+i}_R(N)$ for all $j\ge d$ and $i\ge 0$. Also, in our hypothesis, $n\ge j-\dim(M)+1\ge d+1$. Thus, we may replace $N$ by $N'$, and assume that $N$ is MCM. Hence the result follows from (4).
\end{proof}
% \com{I need to check from here. -- Dipankar}

The conditions `$e(M) < 2 \mu(M)$', `$e(M) < 2 \type(M)$', `$e(M) > 2 \mu(M)$' and `$e(M) > 2 \type(M)$' in Theorems~\ref{thm:e-mu-pd-id}, \ref{thm:pd-e-type-test} and \ref{thm:Ext-pd-id-test} cannot be replaced by `$e(M) \le 2 \mu(M)$', `$e(M) \le 2 \type(M)$', `$e(M) \ge 2 \mu(M)$' and `$e(M) \ge 2 \type(M)$' respectively. Indeed, there are examples of CM modules $M$ (of minimal multiplicity) with $e(M) = 2 \mu(M)=2\type(M)$  which are neither Tor-pd-test, nor Ext-id-test, nor Ext-pd-test modules.
% modules for projective and injective dimensions.
% \old{Same holds true for Ext-pd-test modules. Same examples work well.}

\begin{example}\label{exam:strict-ineq-cant-be-eq}
    Consider a hypersurface $R$ of multiplicity $2$ (e.g., $R=k[x]/(x^2)$). Then, the $R$-module $M:=R$ is CM of minimal multiplicity such that $e(M)=2=2\mu(M)=2\type(M)$. Note that $\Tor_i^R(M,k)=0$ and $\Ext_R^i(M,k)=0$ for all $i >0$, and $\Ext_R^j(k,M)=0$ for all $j>\dim(R)$. However, $\pd_R(k)=\infty=\id_R(k)$ as $R$ is not regular.  
    
    % In particular, as an $R$-module, $M$ is not Tor-pd-test, Ext-id-test and Ext-pd-test.
    % For an example, let $R=\dfrac{k[|x|]}{(x^2)}$ and $M=R$. Clearly, $e(M)=2\mu(M)=2$ and $\pd_R(M)=\id_R(M)=0 < \infty$. But $R$ is hypersurface not regular.
\end{example}
% \com{
% Try to find the following example.
% \begin{example}
%     Non free module $M$ over a Gorenstein ring giving similar kind example as \ref{exam:strict-ineq-cant-be-eq}.
% \end{example}
% }

\section{Characterizations of complete intersection local rings}\label{sec:char-complete-intersection-ring}
% \com{
% The following question arises naturally from Theorem~\ref{thm:char-CI-rings-of-codim-le-2-Ext-Tor-vanishing}.
% \begin{question}\label{ques:CI-codim-2-examples}
%     If $R$ is complete intersection of codimension at most $2$, then can we construct $R$-modules $M$ and $N$ as given in (1) or (2) of Theorem~\ref{thm:char-CI-rings-of-codim-le-2-Ext-Tor-vanishing}?
% \end{question}
% Note that when $R$ is regular that is not a field, the answer to Question~\ref{ques:CI-codim-2-examples} is positive. Moreover, a positive answer to this question gives some characterizations of complete intersection rings of codimension at most $2$.
% }

In this section, we provide a number of characterizations of complete intersection local rings in terms of vanishing of Ext and Tor involving CM modules of certain multiplicities. Note that the class of complete intersection local rings contains regular local rings and hypersurfaces. We start with characterizations of hypersurface of multiplicity at most $2$. Recall that $R$ is called a hypersurface if $R$ is a complete intersection of codimension at most $1$, equivalently, if $R$ is CM and $\mu(\fm)-\dim(R)\le 1$.

\begin{theorem}\label{thm:char-hypersurface-e-le-2}
The following statements are equivalent.
\begin{enumerate}[\rm (1)]
    \item 
    $R$ is a hypersurface of multiplicity at most $2$.
    \item 
    For every $n$ with $0\le n \le \dim(R)$, there exists a CM $R$-module $N$ of dimension $n$ and minimal multiplicity such that $e(N)\le 2\mu(N)$ and $\pd_R(N)<\infty$.
    \item 
    For every $n$ with $0\le n \le \dim(R)$, there exists a CM $R$-module $N$ of dimension $n$ and minimal multiplicity such that $e(N)\le 2\type(N)$ and $\id_R(N)<\infty$.
    \item 
    $R$ admits a nonzero CM module $N$ such that $e(N)\le 2\mu(N)$ and $\pd_R(N)<\infty$.
    \item 
    $R$ admits a non-zero CM module $N$ such that $e(N)\le 2\type(N)$ and $\id_R(N)<\infty$.
    \item
    $R$ admits nonzero CM modules $M$ and $N$ such that $e(M)< 2\mu(M)$, $e(N)\le 2\mu(N)$, and there exists $j\ge 0$ satisfying $\Tor^R_n(M,N)=0$ for all $n=j-\dim(M)+1,\ldots,j+\beta_j^{R}(N)$.
    \item 
    $R$ admits nonzero CM modules $M$ and $N$ such that $ e(M) < 2 \mu(M)$, $e(N) \le 2 \type(N) $, and there exists $j\ge 0$ satisfying $\Ext_R^n(M,N)=0$ for all $n= j-\dim(M)+1, \ldots, j+s+\sum_{i=0}^s {s \choose i} \mu^{j+i}_R(N)$, where $s=\dim(N)$.
    \item 
    $R$ admits nonzero CM modules $M$ and $N$ such that $e(M) < 2 \type(M) $, $ e(N) \le 2 \mu(N)$, and there exists $j \ge 0$ such that $\Ext^n_R(N,M)=0$ for all $n= j+1,\ldots, j+\beta_j^{R}(N)+\dim(M)$.
\end{enumerate}
Moreover, in each of the conditions {\rm (6)--(8)} above, it follows that $\pd_R(N)<\infty$.     
\end{theorem}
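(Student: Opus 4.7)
The overall plan is the cycle $(1)\Rightarrow (2),(3)\Rightarrow (4),(5)\Rightarrow (1)$, together with the side equivalences $(4)\Leftrightarrow (6)\Leftrightarrow (8)$ and $(5)\Leftrightarrow (7)$ that attach the remaining three conditions to (4) and (5) respectively. For $(1)\Rightarrow (2),(3)$, after (harmlessly) passing to a faithfully flat extension with infinite residue field via \Cref{rmk:flat-ext-CM}, one uses \Cref{rmk:superficial}(2) to pick an $R$-regular sequence $x_1,\dots,x_{d-n}$ that is part of a minimal generating set of $\fm$ with $e(R/(x_1,\dots,x_{d-n}))=e(R)$; then $N := R/(x_1,\dots,x_{d-n})$ is a hypersurface of dimension $n$ and multiplicity $\le 2$, so $\mu(N)=\type(N)=1$ (using the Gorensteinness of $R$), $e(N)\le 2=2\mu(N)=2\type(N)$, $N$ has minimal multiplicity by \Cref{rmk:superficial}(1)-(2), and $\pd_R(N)=d-n<\infty$ (Koszul) together with Gorensteinness yields $\id_R(N)<\infty$. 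The steps $(2)\Rightarrow (4)$ and $(3)\Rightarrow (5)$ are trivial. For $(4)\Rightarrow (6),(8)$ and $(5)\Rightarrow (7)$ take $M=k$: since $e(k)=\mu(k)=\type(k)=1<2$ the multiplicity hypothesis on $M$ holds strictly, and the vanishing ranges become vacuous once $j$ is chosen larger than $\pd_R(N)$ or $\id_R(N)$, as the relevant Betti numbers $\beta_j^R(N)$ or Bass numbers $\mu_R^{j+i}(N)$ vanish there. The reverse directions $(6)\Rightarrow (4)$, $(7)\Rightarrow (5)$, and $(8)\Rightarrow (4)$ are direct invocations of the three parts of \Cref{thm-A}.

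The core step is $(4)\Rightarrow (1)$. Finiteness of $\pd_R(N)$ makes $R$ Cohen-Macaulay by Auslander-Buchsbaum. Working without loss of generality with infinite residue field, pick a reduction $\mathbf{x}$ of $\fm$ with respect to $N$ and set $L:=N/\mathbf{x}N$; then $L$ has finite length and satisfies $\lambda(L)\le 2\mu(L)$. Because $\Tor^R_{\gg 0}(N,k)=0$, \Cref{lem:cons-vanishing-Tor}(1) gives $\Tor^R_{\gg 0}(L,k)=0$. Feeding $L$ and $X=k$ into the Betti-growth inequality \cite[Lem.~5.1.(1)]{DGS} that powers \Cref{prop:lambda-mu-id-pd}(1) produces $\beta_n^R(k)\le \beta_{n-1}^R(k)$ for all $n\gg 0$, so the Betti sequence of $k$ is eventually non-increasing. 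If $R$ is not regular it is everywhere positive, hence eventually constant, which forces $\cx_R(k)\le 1$; by \Cref{prop:cx-curv-facts}(2), $R$ is a hypersurface. To deduce $e(R)\le 2$ I induct on $d=\dim R$. The base case $d=0$ is immediate: finite projective dimension over an Artinian local ring forces $N$ free, so $e(N)=\mu(N)e(R)$ and $e(N)\le 2\mu(N)$ gives $e(R)\le 2$. In the inductive step, if $N$ is MCM the freeness argument again applies; otherwise, using prime avoidance, pick $x\in \fm\setminus \fm^2$ regular on both $R$ and $N$ (possible when $\dim N\ge 1$), so that $R/(x)$ is a hypersurface of dimension $d-1$ with $e(R/(x))=e(R)$ and $N/xN$ still witnesses (4). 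The subtle remaining case $\dim(N)=0$---equivalently, when one reduces to a finite-length $L$ with $\pd_R(L)=d$ and $\lambda(L)\le 2\mu(L)$---is the main obstacle. I intend to handle it either by choosing $x\in \ann_R(L)\cap(\fm\setminus\fm^2)$ regular on $R$ (which always works in the cyclic subcase, where $L=R/I$ forces $I\supseteq \fm^2$ and $I$ contains a generator in $\fm\setminus\fm^2$) and invoking change of rings, or, in the residual non-cyclic cases, by a direct Euler-characteristic/Hilbert-series computation on the minimal free resolution of a finite-length finite-pd module over a hypersurface to force $\lambda(L)\ge e(R)\mu(L)$, contradicting $\lambda(L)\le 2\mu(L)$ once $e(R)\ge 3$.

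For $(5)\Rightarrow (1)$, Matlis duality reduces the task to the previous case. Bass's conjecture makes $R$ Cohen-Macaulay; after completing (so that the Matlis dual is available, with all relevant invariants preserved by \Cref{rmk:flat-ext-CM}) and reducing $N$ modulo a reduction $\mathbf{x}$ of $\fm$ with respect to $N$, the finite-length module $L:=N/\mathbf{x}N$ satisfies $\lambda(L)\le 2\type(L)$ and $\id_R(L)<\infty$. Its Matlis dual $L^\vee$ then has $\lambda(L^\vee)=\lambda(L)\le 2\type(L)=2\mu(L^\vee)$ and $\pd_R(L^\vee)<\infty$, furnishing a witness for (4) and allowing the already-established $(4)\Rightarrow(1)$ to conclude. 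Finally, the ``Moreover'' clause: \Cref{thm-A} directly yields $\pd_R(N)<\infty$ under (6) or (8); under (7) it yields only $\id_R(N)<\infty$, but having now established that $R$ is Gorenstein from (1), the standard equivalence of finite injective and finite projective dimension over Gorenstein local rings promotes this to $\pd_R(N)<\infty$.
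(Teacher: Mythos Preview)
Your logical skeleton is sound and matches the paper's: the implications $(1)\Rightarrow(2),(3)$, $(2)\Rightarrow(4)$, $(3)\Rightarrow(5)$, and the attachments $(4)\Leftrightarrow(6),(8)$ and $(5)\Leftrightarrow(7)$ via $M=k$ and \Cref{thm-A} are exactly what the paper does. Your argument that $R$ is a hypersurface in $(4)\Rightarrow(1)$, by feeding $L=N/\mathbf{x}N$ into \cite[Lem.~5.1.(1)]{DGS} with $X=k$ to force $\beta_n^R(k)\le\beta_{n-1}^R(k)$ for $n\gg 0$, is correct and is precisely what underlies the paper's citation of \cite[Cor.~5.5.(3)]{DGS}. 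Your reduction of $(5)\Rightarrow(1)$ to $(4)\Rightarrow(1)$ via Matlis duality is also the paper's route.

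The genuine gap is in deducing $e(R)\le 2$. Your induction on $\dim R$ unavoidably lands on the case of a finite-length module $L$ with $\pd_R(L)<\infty$ and $\lambda(L)\le 2\mu(L)$, where you propose to establish $\lambda(L)\ge e(R)\mu(L)$ by ``a direct Euler-characteristic/Hilbert-series computation''. That inequality is not elementary: in the local (non-graded) setting a minimal free resolution does not yield an additive identity among Hilbert--Samuel functions, so no straightforward Euler-characteristic bookkeeping produces it. This is precisely the content of \cite[Thm.~3.8]{IMW}, which the paper invokes in one line. Your fallback in the cyclic subcase (choosing $x\in\ann_R(L)\cap(\fm\smallsetminus\fm^2)$ regular on $R$) is also incomplete as stated: the line $I/\fm^2$ is fixed by $L$, you additionally need $x$ superficial for $R$ to preserve $e(R)$, and there is no a~priori reason this fixed hyperplane avoids all the subspaces $(\fp+\fm^2)/\fm^2$ for $\fp\in\Ass(R)$. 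Since the non-cyclic case is in any event deferred to the unproved inequality, the step $(4)\Rightarrow e(R)\le 2$ is not established. The clean fix is exactly what the paper does: once $R$ is a hypersurface and $\pd_R(L)<\infty$ with $L$ of finite length, cite \cite[Thm.~3.8]{IMW} to get $e(R)\mu(L)\le\lambda(L)\le 2\mu(L)$.

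A minor point: ``$\pd_R(N)<\infty$ and $N$ CM $\Rightarrow R$ CM by Auslander--Buchsbaum'' actually requires the New Intersection Theorem (cf.\ \cite[9.4.6, 9.4.8(a)]{BH98}) in addition; but this step is unnecessary anyway, since $\cx_R(k)\le 1$ already forces $R$ to be a complete intersection, hence CM.
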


\begin{proof}
Passing to the faithfully flat extension $R[X]_{\fm[X]}$, we may assume that $R$ has infinite residue field.

(1) $\Longrightarrow$ (2) and (1) $\Longrightarrow$ (3): Set $d:=\dim(R)$. Fix $n$ with $0\le n \le d$. Denote $m:=d-n$. Since $e(R)\le 2$, in particular, $R$ has minimal multiplicity. Moreover, by Remark~\ref{rmk:superficial}, there exists an $R$-regular sequence ${\bf x} = x_1,\ldots,x_m$ such that $e(R) = e(R/(\bf x))$, and $R/(\bf x)$ has minimal multiplicity as an $R$-module. Set $N:=R/(\mathbf x)$. Then $N$ is a CM $R$-module of dimension $n$ and minimal multiplicity. Also $\pd_R(N)<\infty$ and $e(N) = e(R) \le 2=2\mu(N)$. Since $R$ is in particular Gorenstein, $\id_R(N)<\infty$ and $\type(N)=\type(R)=1$. Hence $e(N) \le 2=2\type(N)$. So the implications follow.

(2) $\Longrightarrow$ (4) and (3) $\Longrightarrow$ (5): The implications are trivial.

(4) $\Longrightarrow$ (6), (5) $\Longrightarrow$ (7) and (4) $\Longrightarrow$ (8): These implications are obvious by setting $M:=k$.

(6) $\Longrightarrow$ (1): By Theorem~\ref{thm:e-mu-pd-id}.(1), $\pd_R(N)<\infty$, i.e., $\cx_R(N) = 0$. Hence, since $e(N)\le 2\mu(N)$, \Cref{DGS.5.5}.(1) yields that $\cx_R(k)\le 1$. Thus, $R$ is a hypersurface by Proposition~\ref{prop:cx-curv-facts}.(2). There exists a maximal $N$-regular sequence $\mathbf x$ such that $e(N)=\lambda(N/\mathbf xN)$. Put $L:=N/\mathbf x N$, which has finite length, and $\pd_R(L)<\infty$. Moreover, by hypothesis, $\lambda(L)=e(N)\le 2\mu(N)=2\mu(L)$. By \cite[Thm.~3.8]{IMW}, $e(R) \mu(L)\le \lambda(L)$. Thus, $e(R)\mu(L)\le 2\mu(L)$, and hence $e(R)\le 2$.

(7) $\Longrightarrow$ (1): In view of Theorem~\ref{thm:e-mu-pd-id}.(2), $\id_R(N) < \infty$, i.e., $\injcx_R(N) = 0$. Therefore, since $ e(N) \le 2 \type(N) $, \Cref{DGS.5.5}.(3) yields that $\cx_R(k) \le 1$, and hence $R$ is a hypersurface by Proposition~\ref{prop:cx-curv-facts}.(2). It follows that $\pd_R(N)<\infty$ as $\id_R(N) < \infty$. There exists a maximal $N$-regular sequence $\mathbf x$ such that $e(N)=\lambda(N/\mathbf xN)$. Put $L:=N/\mathbf x N$, which has finite length. Then, $\type(N)=\type(L)=\mu(L^{\vee})$. Thus, $\lambda(L^{\vee})=\lambda(L)=e(N)\le 2\type(N)=2\mu(L^{\vee})$. Since $\pd_R(N)<\infty$, one obtains that $\pd_R(L)<\infty$, which implies that $\id_R(L^{\vee})<\infty$, and hence $\pd_R(L^{\vee})<\infty$ as $R$ is in particular Gorenstein. Therefore, by \cite[Thm.~3.8]{IMW}, $e(R)\mu(L^{\vee})\le \lambda(L^{\vee}) \le 2\mu(L^{\vee})$. Thus, since $L^{\vee}\neq 0$, one concludes that $e(R) \le 2$.

(8) $\Longrightarrow$ (1): By Theorem~\ref{thm:pd-e-type-test}, $\pd_R(N)<\infty$. The rest of the proof is similar to that of (6) $\Longrightarrow$ (1).
\end{proof}

In the statements of Theorem~\ref{thm:char-hypersurface-e-le-2}, replacing `$e(N)\le 2\mu(N)$' and `$e(N) \le 2 \type(N) $' by `$e(N) < 2\mu(N)$' and `$e(N) < 2 \type(N) $' respectively, one obtains similar characterizations of regular local rings.

\begin{theorem}\label{thm:char-reg}
The following statements are equivalent.
\begin{enumerate}[\rm (1)]
    \item 
    $R$ is regular.
    \item 
    For every integer $0\le n \le \dim(R)$, there exists a CM $R$-module $N$ of dimension $n$ and minimal multiplicity such that $e(N)< 2\mu(N)$ and $\pd_R(N)<\infty$.
    \item 
    For every integer $0\le n \le \dim(R)$, there exists a CM $R$-module $N$ of dimension $n$ and minimal multiplicity such that $e(N)< 2\type(N)$ and $\id_R(N)<\infty$.
    \item 
   $R$ admits a nonzero CM module $N$ such that $e(N) < 2\mu(N)$ and $\pd_R(N)<\infty$.
    \item 
    $R$ admits a non-zero CM module $N$ such that $e(N)< 2\type(N)$ and $\id_R(N)<\infty$.
    \item
    $R$ admits nonzero CM modules $M$ and $N$ such that $e(M)< 2\mu(M)$, $e(N)< 2\mu(N)$, and there exists $j\ge 0$ satisfying $\Tor^R_n(M,N)=0$ for all $n=j-\dim(M)+1,\ldots,j+\beta_j^{R}(N)$.
    \item 
    $R$ admits nonzero CM modules $M$ and $N$ such that $ e(M) < 2 \mu(M)$, $e(N)< 2 \type(N) $, and there exists $j\ge 0$ satisfying $\Ext_R^n(M,N)=0$ for all $n= j-\dim(M)+1, \ldots, j+s+\sum_{i=0}^s {s \choose i} \mu^{j+i}_R(N)$, where $s=\dim(N)$.
    \item 
    $R$ admits nonzero CM modules $M$ and $N$ such that $e(M) < 2 \type(M) $, $ e(N)< 2 \mu(N)$, and there exists $j \ge 0$ such that $\Ext^n_R(N,M)=0$ for all $n= j+1,\ldots, j+\beta_j^{R}(N)+\dim(M)$.
   
\end{enumerate}   
\end{theorem}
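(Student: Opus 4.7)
The plan is to mirror the structure of the proof of Theorem~\ref{thm:char-hypersurface-e-le-2}, replacing each weak inequality $e(-)\le 2\mu(-)$ (resp., $e(-)\le 2\type(-)$) by its strict analog. The strict version should allow me to upgrade the conclusion $\cx_R(k)\le 1$ (hypersurface) used in the previous theorem to $\cx_R(k)=0$ (regular), via the strict case of \cite[Cor.~5.5]{DGS}. As before, passing to the faithfully flat extension $R[X]_{\fm[X]}$ and using Remarks~\ref{rmk:flat-ext-CM}, \ref{rmk:flat-ext-Ulrich}, I would assume that the residue field $k$ is infinite at the outset.

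For (1)$\Longrightarrow$(2) and (1)$\Longrightarrow$(3), I would fix $0\le n\le d:=\dim(R)$, choose an $R$-regular sequence $\mathbf{x}=x_1,\ldots,x_{d-n}$ that is part of a minimal generating set of $\fm$ (which exists since $R$ is regular), and set $N:=R/(\mathbf{x})$. Then $N$ is itself a regular local ring of dimension $n$ (in particular CM and Gorenstein), so by Remark~\ref{rmk:superficial} it has minimal multiplicity as an $R$-module, $\mu(N)=\type(N)=1$, and $e(N)=e(R)=1<2$. Finiteness of $\pd_R(N)$ and $\id_R(N)$ is immediate. The implications (2)$\Longrightarrow$(4) and (3)$\Longrightarrow$(5) are trivial. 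For (4)$\Longrightarrow$(6), (5)$\Longrightarrow$(7), and (4)$\Longrightarrow$(8), I would take $M:=k$, observing that $e(k)=1<2=2\mu(k)=2\type(k)$; finiteness of $\pd_R(N)$ (resp., $\id_R(N)$) in (4) (resp., (5)) then yields the required Ext/Tor vanishing for any sufficiently large $j$.

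For the reverse implications (6)$\Longrightarrow$(1), (7)$\Longrightarrow$(1), (8)$\Longrightarrow$(1), I would apply Theorem~\ref{thm:e-mu-pd-id}.(1), Theorem~\ref{thm:e-mu-pd-id}.(2), and Theorem~\ref{thm:pd-e-type-test} respectively to conclude $\pd_R(N)<\infty$ in cases (6) and (8), and $\id_R(N)<\infty$ in case (7); hence $\cx_R(N)=0$ (resp., $\injcx_R(N)=0$). Combined with the strict inequality $e(N)<2\mu(N)$ or $e(N)<2\type(N)$, the strict form of \cite[Cor.~5.5.(3),(5)]{DGS} should force $\cx_R(k)=0$, and Proposition~\ref{prop:cx-curv-facts}.(2) then gives that $R$ is regular.

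The main technical point will be checking that the strict analog of \cite[Cor.~5.5]{DGS} actually yields $\cx_R(k)=0$ rather than merely $\cx_R(k)\le 1$; once that is confirmed, there is no further multiplicity estimate to make (unlike in Theorem~\ref{thm:char-hypersurface-e-le-2}, where \cite[Thm.~3.8]{IMW} was used to bound $e(R)\le 2$), since $\cx_R(k)=0$ already encompasses $e(R)=1$. In particular, no new conceptual ingredients beyond the strict-inequality bookkeeping should be needed.
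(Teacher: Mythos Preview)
Your proposal is correct and follows essentially the same route as the paper. The paper's proof is a one-paragraph remark that the argument of Theorem~\ref{thm:char-hypersurface-e-le-2} carries over verbatim, with the sole change that in the implications (6)$\Longrightarrow$(1) and (7)$\Longrightarrow$(1) one invokes \cite[Cor.~5.5.(4) and (6)]{DGS} (the strict-inequality items) in place of \cite[Cor.~5.5.(3) and (5)]{DGS}, yielding $\cx_R(k)=0$ directly; your ``strict form of \cite[Cor.~5.5.(3),(5)]{DGS}'' is precisely this, and your observation that the \cite[Thm.~3.8]{IMW} step then becomes unnecessary is exactly right.
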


\begin{proof}
    The proof goes exactly through similar lines as that of Theorem~\ref{thm:char-hypersurface-e-le-2} once we notice the following. When $R$ is regular, $e(R)=1$. In the proof of `(6) $\Longrightarrow$ (1)' and `(7) $\Longrightarrow$ (1)', in place of \Cref{DGS.5.5}.(1) and (3), we must use \Cref{DGS.5.5}.(2) and (4) respectively to conclude that $\cx_R(k) = 0$, i.e., $\pd_R(k)<\infty$, and hence  $R$ is regular.
\end{proof}

When $M$ or $N$ or both have minimal multiplicity, one obtains more criteria for a local ring to be regular. We list some of these criteria in the following theorem and remark.

\begin{theorem}\label{thm:char-reg-dim-2}
The following statements are equivalent:
\begin{enumerate}[\rm(1)]
    \item
    $R$ is regular of dimension $\ge 2$.
   \item 
    For every $n$ with $0 \le n \le {\max\{\dim(R)-2,0\}}$, there exists a nonzero CM $R$-module $X$ of dimension $n$ and minimal multiplicity such that $e(X)> 2\type(X)$ and $\pd_R(X)<\infty$.
    \item 
    For every $n$ with $0 \le n \le {\max\{\dim(R)-2,0\}} $, there exists a nonzero CM $R$-module $X$ of dimension $n$ and minimal multiplicity such that $e(X)> 2\mu(X)$ and $\id_R(X)<\infty$.
    \item 
    $R$ admits a non-zero CM module $X$ of minimal multiplicity such that $e(X) > 2\type(X)$ and $\pd_R(X)<\infty$.
    \item 
    $R$ admits a non-zero CM module $X$ of minimal multiplicity such that $e(X) > 2\mu(X)$ and $\id_R(X)<\infty$.
    \item 
    $R$ admits nonzero CM modules $M$ and $N$ such that $e(M)< 2\mu(M)$, $e(N)> 2\type(N)$, $N$ has minimal multiplicity, and there exists $j\ge 0$ satisfying $\Tor^R_n(M,N)=0$ for all $n=j-\dim(M)+1,\ldots,j+\beta_j^R(N)$. 
    % \item 
    % $R$ admits nonzero CM modules $M$ and $N$ such that $e(M)< 2\mu(M)$, $e(N)> 2\type(N)$, $N$ has minimal multiplicity, and there exists $j \ge 0$ satisfying $\Tor^R_n(M,N)=0$ for all $n=j-\dim(N)+1,\ldots,j+\beta_j^R(M)$.
    \item
    $R$ admits nonzero CM modules $M$ and $N$ such that $e(M)<2\type(M)$, $e(N)> 2\type(N)$, $N$ has minimal multiplicity, and there exists $j \ge 0$ satisfying $\Ext^n_R(N,M)=0$ for all $n=j+1,\ldots,j+\beta_j^R(N)+\dim(M)$.
    \item
    $R$ admits nonzero CM modules $M$ and $N$ such that $e(M)<2\mu(M)$, $e(N)> 2\mu(N)$, $N$ has minimal multiplicity, and there exists $j\ge 0$ satisfying $\Ext_R^n(M,N)=0$ for all $n=j-\dim(M)+1,\ldots,j+s+\sum_{i=0}^s{s\choose i} \mu^{j+i}_R(N)$, where $s:=\dim(N)$.   
    % \item 
    % $R$ admits nonzero CM modules $M$ and $N$ such that $e(M)<2\mu(M)$, $e(N)> 2\mu(N)$, $N$ has minimal multiplicity, and there exists $j\ge 0$ satisfying $\Ext_R^n(M,N)=0$ for all $n= j+1, \ldots,j+\beta_j^R(M)+\dim(N)$.
    % \item[]
    % \com{
    % $R$ admits nonzero modules $M$ and $N$ both of minimal multiplicity such that $e(M)<2\mu(M)$, $e(N)> 2\mu(N)$, and there exists $j\ge \depth(N)$ such that $\Ext_R^n(M,N)=0$ for all $n= j-\dim(M)+1, \ldots, j+\mu_R^j(N)$.}
\end{enumerate}    
\end{theorem}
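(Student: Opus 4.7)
The plan is to mirror the architecture of the proofs of Theorems~\ref{thm:char-hypersurface-e-le-2} and \ref{thm:char-reg}, producing explicit new modules on the constructive side and invoking the test-module results of Section~\ref{sec:test-modules}, especially Proposition~\ref{prop:pd-id-test-m2M-zero}, on the converse side. Passing to $R[X]_{\fm[X]}$, we may assume $k$ is infinite. The scheme of implications I have in mind is $(1)\Rightarrow(2),(3)$; $(2)\Rightarrow(4)$ and $(3)\Rightarrow(5)$; $(4)\Rightarrow(6),(7)$ and $(5)\Rightarrow(8)$; and $(6),(7),(8)\Rightarrow(1)$.

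For $(1)\Rightarrow(3)$, assume $R$ is regular of dimension $d\ge 2$ with regular system of parameters $x_1,\dots,x_d$. For each $0\le n\le d-2$, set
\[
X_n:=R\big/\bigl((x_{n+3},\dots,x_d)+(x_{n+1},x_{n+2})^{2}\bigr).
\]
I would check that $X_n$ is free of rank three over $k[[x_1,\dots,x_n]]$ on $\{\bar 1,\bar x_{n+1},\bar x_{n+2}\}$ (hence CM over $R$ of dimension $n$) and that $\fm^2 X_n=(x_1,\dots,x_n)\fm X_n$, so $X_n$ has minimal multiplicity with $e(X_n)=3$, $\mu(X_n)=1$, $\type(X_n)=2$; both $\pd_R(X_n)$ and $\id_R(X_n)$ are finite because $R$ is regular. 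So $e(X_n)>2\mu(X_n)$ and $(3)$ holds. For $(1)\Rightarrow(2)$ I would use the canonical dual $X_n^{\dagger}=\Ext^{d-n}_R(X_n,R)$, which by Remark~\ref{rmk:e-type-dagger} is CM of dimension $n$ with $e=3$, $\mu=2$, $\type=1$, so $e(X_n^{\dagger})>2\type(X_n^{\dagger})$. Its minimal multiplicity I extract from the Gorenstein duality $X_n^{\dagger}/\mathbf{x} X_n^{\dagger}\cong(X_n/\mathbf{x} X_n)^{\vee}$ for $\mathbf{x}=x_1,\dots,x_n$ (obtained by iterating $\Hom_R(-,R)$ against the $X_n$-regular sequence $\mathbf{x}$), which propagates $\fm^2(X_n/\mathbf{x} X_n)=0$ to $\fm^2 X_n^{\dagger}\subseteq\mathbf{x} X_n^{\dagger}$, at which point Lemma~\ref{lem:min-mult-mod} and a direct count of $\mu(\fm X_n^{\dagger})$ finish the numerics.

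The implications $(2)\Rightarrow(4)$ and $(3)\Rightarrow(5)$ are immediate. For $(4)\Rightarrow(6),(7)$ and $(5)\Rightarrow(8)$ I would set $M:=k$: since $e(k)=1<2=2\mu(k)=2\type(k)$, the condition on $M$ is satisfied, and the finiteness of $\pd_R(N)$ (resp.\ $\id_R(N)$) supplied by $(4)$ (resp.\ $(5)$) makes the relevant $\Tor^R_n(k,N)$ and $\Ext^n_R(N,k)$ (resp.\ $\Ext^n_R(k,N)$) vanish for $n$ past the corresponding homological dimension of $N$, so any $j$ at least that dimension satisfies the prescribed vanishing range.

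For $(6),(7),(8)\Rightarrow(1)$: Theorem~\ref{thm:e-mu-pd-id}$(1)$, Theorem~\ref{thm:pd-e-type-test}, and Theorem~\ref{thm:e-mu-pd-id}$(2)$ respectively yield $\pd_R(N)<\infty$ in cases $(6),(7)$ and $\id_R(N)<\infty$ in case $(8)$. Choosing an $N$-regular sop $\mathbf{x}$ witnessing the minimal multiplicity of $N$ and putting $L:=N/\mathbf{x} N$, one gets $L$ of finite length with $\fm^2 L=0$, finite $\pd$ (resp.\ finite $\id$ in case $(8)$), and numerics $\lambda(L)=e(N)$, $\mu(L)=\mu(N)$, $\type(L)=\type(N)$; thus $\lambda(L)>2\type(L)$ in $(6),(7)$ and $\lambda(L)>2\mu(L)$ in $(8)$. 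In cases $(6),(7)$ I would Matlis-dualise to $L^{\vee}$, which has $\fm^2 L^{\vee}=0$, $\id_R(L^{\vee})<\infty$ and $\lambda(L^{\vee})>2\mu(L^{\vee})$, and apply Proposition~\ref{prop:pd-id-test-m2M-zero}$(1)$ with $M=L^{\vee}$, $X=k$, $j=\id_R(L^{\vee})$ to obtain $\pd_R(k)<\infty$; in case $(8)$ the same proposition applies directly to $M=L$, $X=k$. Hence $R$ is regular by Proposition~\ref{prop:cx-curv-facts}$(2)$. To force $\dim R\ge 2$ I argue directly: over a field or a DVR, any nonzero CM module $N$ of minimal multiplicity is, after reducing modulo a sop, a finite length $R/\fm^2$-module of the form $(R/\fm^2)^{\oplus a}\oplus k^{\oplus b}$, whose numerics $e(N)=2a+b$, $\mu(N)=\type(N)=a+b$ contradict the strict inequalities $e(N)>2\type(N)$ or $e(N)>2\mu(N)$ imposed in $(6),(7),(8)$. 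The main obstacle I anticipate is the minimal multiplicity of $X_n^{\dagger}$, which reduces to establishing the duality $X_n^{\dagger}/\mathbf{x} X_n^{\dagger}\cong(X_n/\mathbf{x} X_n)^{\vee}$ carefully and then controlling $\mu(\fm X_n^{\dagger})$ to match the numerical identity in Lemma~\ref{lem:min-mult-mod}.
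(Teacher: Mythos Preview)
Your overall scheme is correct and follows the same implication architecture as the paper. There are, however, a few genuine differences worth noting.

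For $(1)\Rightarrow(2),(3)$, your module $X_n=R/\bigl((x_{n+3},\dots,x_d)+(x_{n+1},x_{n+2})^2\bigr)$ has fixed multiplicity $3$, while the paper instead takes $N=R/(x_1,\dots,x_{d-n})^2$, whose multiplicity is $d-n+1$. Both work. For the dagger, the paper avoids your Matlis-duality-plus-numerics detour: since $N^{\dagger}$ is annihilated by $(x_1,\dots,x_{d-n})^2$, and one checks at the ideal level that $\fm^2/(x_1,\dots,x_{d-n})^2=(\mathbf y)\bigl(\fm/(x_1,\dots,x_{d-n})^2\bigr)$ in $R/(x_1,\dots,x_{d-n})^2$, the equality $\fm^2 N^{\dagger}=(\mathbf y)\fm N^{\dagger}$ follows by just multiplying through. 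The same annihilator trick works verbatim for your $X_n^{\dagger}$ (since $X_n^{\dagger}$ is an $R/I$-module and $\fm^2(R/I)=(x_1,\dots,x_n)\fm(R/I)$), and it is considerably cleaner than the route you sketch via $X_n^{\dagger}/\mathbf{x}X_n^{\dagger}\cong(X_n/\mathbf{x}X_n)^{\vee}$: that isomorphism only gives $\fm^2 X_n^{\dagger}\subseteq(\mathbf{x})X_n^{\dagger}$, not the stronger $\fm^2 X_n^{\dagger}=(\mathbf{x})\fm X_n^{\dagger}$ needed for Lemma~\ref{lem:min-mult-mod}(2), so the ``direct count of $\mu(\fm X_n^{\dagger})$'' you anticipate really is required if you stick with your route.

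For $(6),(7),(8)\Rightarrow(1)$, your argument is more self-contained than the paper's: after obtaining $\pd_R N<\infty$ (resp.\ $\id_R N<\infty$) from the test theorems, the paper invokes \cite[Cor.~6.6]{DGS} to conclude $\cx_R(k)=0$, whereas you reduce to the Artinian module $L$ (or $L^{\vee}$) with $\fm^2$ annihilating it and apply Proposition~\ref{prop:pd-id-test-m2M-zero}(1) with $X=k$ and $j$ equal to the relevant injective dimension. This is a perfectly valid alternative. The argument excluding $\dim R\le 1$ is essentially the same in both: the paper writes it as the bound $\lambda(L)\le\lambda(R/\fm^2)\mu(L)\le 2\mu(L)$, which is exactly what underlies your decomposition $(R/\fm^2)^{a}\oplus k^{b}$.
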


\begin{proof}
(1) $\Longrightarrow$ (2) and (1) $\Longrightarrow$ (3): Let $R$ be regular of dimension $d\ge 2$. Then there exists an $R$-regular sequence $x_1,\dots,x_d$ which generates $\fm$. Fix $0\le n\le d-2$. Set ${\bf x}:=x_1,\dots,x_{d-n}$ and $N:=R/({\bf x})^2$. Then, $N$ is a CM $R$-module of projective dimension $d-n$ by \cite[1.4.27 and 2.1.5.(a)]{BH98}. Hence, by the Auslander-Buchsbaum formula, $\dim(N)=d-(d-n)=n$. Set ${\bf y}:=x_{d-n+1},\ldots,x_d$. Since $\dim(N/{\bf y}N)=0$, in view of \cite[2.1.2.(c)]{BH98}, ${\bf y}$ is regular on $N$. Moreover, $\fm^2 N = ({\bf x,y})^2/({\bf x})^2 = ({\bf y})\fm N$. Therefore, by \Cref{lem:min-mult-mod}, $N$ has minimal multiplicity, and $e(N) = \lambda(N/{\bf y}N)$. Note that $\lambda(N/{\bf y}N) = d-n+1\ge 3>2\mu(N)$. Thus $e(N) > 2\mu(N)$.  Since $R$ is regular, one also has that $\id_R(N)<\infty$. Thus the module $X:=N$ satisfies all the conditions in (3). Setting $N^{\dagger}:=\Ext_R^{d-n}(N,R)$, in view of Remark~\ref{rmk:e-type-dagger}, one has that $N^{\dagger}$ is CM $R$-module of dimension $n$, $e(N^{\dagger}) = e(N)$ and $\type(N^{\dagger}) = \mu(N)$. So $e(N^{\dagger}) > 2\type(N^{\dagger})$. As the sequence ${\bf y}$ is regular on $N$, it is also regular on $N^{\dagger}$, see, e.g., \cite[Prop.~5.3.(3)]{GP24}. Since $\fm^2 N = ({\bf y})\fm N$, one has that $\fm^2/({\bf x})^2= ({\bf y})(\fm/({\bf x})^2)$, and then multiplying both sides by $N^{\dagger}$, this yields that $\fm^2N^{\dagger} = ({\bf y})\fm N^{\dagger}$. Thus, by \Cref{lem:min-mult-mod}, $N^{\dagger}$ has minimal multiplicity as an $R$-module. As $R$ is regular, $\pd_R(N^{\dagger})<\infty$. So the module $X:=N^{\dagger}$ satisfies all the conditions in (2).

(2) $\Longrightarrow$ (4) and (3) $\Longrightarrow$ (5): The implications are trivial.

(4) $\Longrightarrow$ (6), (4) $\Longrightarrow$ (7) and (5) $\Longrightarrow$ (8): These implications are obvious by taking $M:=k$ and $N:=X$.

In order to prove the other implications, we may assume that $R$ has infinite residue field.

(6) $\Longrightarrow$ (1): Since $e(M)< 2\mu(M)$, in view of Theorem~\ref{thm:e-mu-pd-id}.(1), $\pd_R(N)<\infty$, i.e., $\cx_R(N) = 0$. Then \Cref{DGS.6.6}.(4) yields that $\cx_R(k)=\cx_R(N)=0$, i.e., $\pd_R(k)<\infty$, and hence $R$ is regular. Since $N$ has minimal multiplicity, by \Cref{lem:min-mult-mod}, there exists a maximal $N$-regular sequence $\mathbf x$ such that $\fm^2N=(\mathbf x)\fm N$ and $e(N)=\lambda(N/(\mathbf x) N)=\lambda(L)$, where $L:=(N/(\mathbf x)N)^{\vee}$. Notice that $\fm^2(N/(\mathbf x)N)=0$, which implies that $\fm^2L=0$. Thus, $L$ is an $R/\fm^2$-module, and hence $\lambda(L)\le \lambda(R/\fm^2)\mu(L)$. By Matlis duality, $\mu(L)=\type(N/(\mathbf x)N)$. If $\dim(R)\le 1$, then $R$ being regular, one has that $\mu(\fm)\le 1$, and hence $\lambda(R/\fm^2)=1+\mu(\fm)\le 2$. Thus, $e(N)=\lambda(L)\le \lambda(R/\fm^2)\mu(L)\le 2\, \type(N/(\mathbf x)N)=2\type(N) $, contradicting $e(N)>2\type(N)$. Therefore $\dim(R)\ge 2$.

(7) $\Longrightarrow$ (1): By Theorem~\ref{thm:pd-e-type-test}, $\pd_R(N)<\infty$. The rest of the proof is similar to that of (6) $\Longrightarrow$ (1).

(8) $\Longrightarrow$ (1): In view of Theorem~\ref{thm:e-mu-pd-id}.(2), $\id_R(N)<\infty$, i.e., $\injcx_R(N) = 0$. Hence \Cref{DGS.6.6}.(2) yields that $\cx_R(k)=\injcx_R(N)=0$. So $R$ is regular. Since $N$ has minimal multiplicity, there exists a maximal $N$-regular sequence $\mathbf x$ such that $\fm^2N=(\mathbf x)\fm N$ and $e(N)=\lambda(N/(\mathbf x) N)=\lambda(L)$, where $L:=N/(\mathbf x)N$. Note that $\fm^2L=0$. If $\dim(R)\le 1$, then $R$ being regular, one has that $\mu(\fm)\le 1$, and hence $\lambda(R/\fm^2)=1+\mu(\fm)\le 2$. Thus, $e(N)=\lambda(L)\le \lambda(R/\fm^2)\mu(L)\le 2 \mu(N)$, contradicting $e(N)>2\mu(N)$. Thus, $\dim(R)\ge 2$.
\end{proof}

\begin{remark}
    In the list of equivalent statements of \Cref{thm:char-reg-dim-2}, we may add the following.
    \begin{enumerate}
        \item[(6$'$)] 
        Replacing only the vanishing condition in \Cref{thm:char-reg-dim-2}.(6) by
        \begin{center}
            $\Tor^R_n(M,N)=0$ for all $n=j-\dim(N)+1,\ldots,j+\beta_j^R(M)$.
        \end{center}
        \item[(8$'$)] 
        Replacing only the vanishing condition in \Cref{thm:char-reg-dim-2}.(8) by
        \begin{center}
            $\Ext_R^n(M,N)=0$ for all $n= j+1, \ldots,j+\beta_j^R(M)+\dim(N)$.
        \end{center}
        \item[(8$''$)] 
        Replacing the vanishing condition in \Cref{thm:char-reg-dim-2}.(8) by
        \begin{center}
            $\Ext_R^n(M,N)=0$ for all $n= j-\dim(M)+1, \ldots, j+\mu_R^j(N)$, for some $j\ge \depth(N)$,
        \end{center}
        and assuming in addition that $M$ also has minimal multiplicity.
    \end{enumerate}
\end{remark}

\begin{proof}
    In order to prove (6$'$) $\Longrightarrow$ (1) and (8$'$) $\Longrightarrow$ (1), by Theorem~\ref{thm:Ext-pd-id-test}.(3) and (1) respectively, one has that $\pd_R(M)<\infty$. Then \Cref{DGS.5.5}.(2) yields that $\cx_R(k)=\cx_R(M)=0$. So $R$ is regular. The rest of the proofs of these implications are similar to that of (6) $\Longrightarrow$ (1) and (8) $\Longrightarrow$ (1) respectively.
    
    (8$''$) $\Longrightarrow$ (1): By \Cref{thm:Ext-pd-id-test}.(2), $\id_R(N)<\infty$. Then \Cref{DGS.6.6}.(2) yields that $\cx_R(k)=\injcx_R(N)=0$. Hence $R$ is regular. Similarly, as in the proof of (8) $\Longrightarrow$ (1), $\dim(R)\ge 2$.
\end{proof}

In addition to \Cref{thm:char-hypersurface-e-le-2}, we obtain the following characterizations of a hypersurface of multiplicity at most $2$ that is not a field.

% \com{We may write the next corollary after Theorem~\ref{thm:char-hypersurface-e-le-2}. We need to fix the following corollary or its proof.}

\begin{theorem}\label{thm:char-hyp-e-le-2-not-field}
The following statements are equivalent:
\begin{enumerate}[\rm(1)]
    \item
    $R$ is a hypersurface of multiplicity at most $2$, but $R$ is not a field.
    \item 
    For every integer $0\le n \le \max\{\dim(R)-1,0\}$, there exists a CM $R$-module $N$ of dimension $n$ and minimal multiplicity such that $e(N)\ge 2\type(N)$ and $\pd_R(N)<\infty$.
    \item 
    For every integer $0\le n \le \max\{\dim(R)-1,0\}$, there exists a CM $R$-module $N$ of dimension $n$ and minimal multiplicity such that $e(N)\ge 2\mu(N)$ and $\id_R(N)<\infty$.
    \item 
    $R$ admits a nonzero CM module $N$ of minimal multiplicity such that $e(N)\ge 2\type(N)$ and $\pd_R(N)<\infty$.
    \item 
    $R$ admits a nonzero CM module $N$ of minimal multiplicity such that $e(N)\ge 2\mu(N)$ and $\id_R(N)<\infty$.
    \item 
    $R$ admits nonzero CM modules $M$ and $N$ such that $e(M)<2\mu(M)$, $e(N)\ge 2\type(N)$, $N$ has minimal multiplicity, and there exists $j\ge 0$ satisfying $\Tor^R_n(M,N)=0$ for all $n=j-\dim(M)+1,\ldots,j+\beta_j^{R}(N)$.
    \item
    $R$ admits nonzero CM modules $M$ and $N$ such that $e(M)<2\type(M)$, $e(N)\ge 2\type(N)$, $N$ has minimal multiplicity, and there exists $j \ge 0$ such that $\Ext^n_R(N,M)=0$ for all $n= j+1,\ldots, j+\beta_j^{R}(N)+\dim(M)$.
    \item
    $R$ admits nonzero CM modules $M$ and $N$ such that $e(M)<2\mu(M)$, $e(N)\ge 2\mu(N)$, $N$ has minimal multiplicity, and there exists $j\ge 0$ satisfying $\Ext_R^n(M,N)=0$ for all $n= j-\dim(M)+1, \ldots, j+s+\sum_{i=0}^s {s \choose i} \mu^{j+i}_R(N)$, where $s=\dim(N)$.    
\end{enumerate}
Moreover, for each of the conditions \rm{(6)--(8)} above, it follows that $\pd_R(N)<\infty$.
\end{theorem}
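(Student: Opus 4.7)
The plan is to establish the equivalences by proving the cycles $(1)\Rightarrow(2)\Rightarrow(4)\Rightarrow(6)\Rightarrow(1)$, $(1)\Rightarrow(3)\Rightarrow(5)\Rightarrow(8)\Rightarrow(1)$, and $(4)\Rightarrow(7)\Rightarrow(1)$, following the blueprint of \Cref{thm:char-hypersurface-e-le-2} but with two crucial modifications: the module constructions from $(1)$ must yield $e(N)\ge 2\type(N)$ (or $e(N)\ge 2\mu(N)$) rather than $\le$, and the reverse implications must exploit minimal multiplicity of $N$. After passing to the faithfully flat extension $R[X]_{\fm[X]}$ we may assume the residue field is infinite.

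For $(1)\Rightarrow(2),(3)$ we split into cases. If $e(R)=2$ and $d:=\dim(R)\ge 1$, choose an $R$-regular sequence $\mathbf{x}$ of length $d-n$ that is part of a minimal generating set of $\fm$, and set $N:=R/(\mathbf{x})$; by Remarks~\ref{rmk:min-mult-R-M} and \ref{rmk:superficial} combined with the Gorensteinness of $R$, the module $N$ is CM of dimension $n$ and minimal multiplicity with $e(N)=2=2\mu(N)=2\type(N)$ and finite projective and injective dimensions. If $d=0$ and $e(R)=2$, take $N:=R$ itself. If $R$ is regular of dimension $d\ge 1$, the construction $N:=R/(\mathbf{x})$ gives $e(N)=1$, which is insufficient, so instead, mimicking the proof of \Cref{thm:char-reg-dim-2}, choose a regular system of parameters $x_1,\dots,x_d$ and set $N:=R/(x_1,\dots,x_{d-n-1},x_{d-n}^2)$ for $0\le n\le d-1$; this $N$ is $R$ modulo a regular sequence of length $d-n$, hence CM Gorenstein of dimension $n$ with $\mu(N)=\type(N)=1$, and a direct computation shows $\fm^2 N=(x_{d-n+1},\dots,x_d)\fm N$ and $\lambda(N/(x_{d-n+1},\dots,x_d)N)=2$, giving minimal multiplicity and $e(N)=2$.

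The implications $(2)\Rightarrow(4)$ and $(3)\Rightarrow(5)$ are immediate, while $(4)\Rightarrow(6),(7)$ and $(5)\Rightarrow(8)$ follow by choosing $M:=k$ (which is CM and satisfies $e(k)=1<2=2\mu(k)=2\type(k)$) and picking $j$ large enough that $\beta_j^R(N)=0$ (using $\pd_R(N)<\infty$ in the first two) or all relevant $\mu_R^{j+i}(N)$ vanish (using $\id_R(N)<\infty$ in the last), rendering the required vanishing ranges empty.

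The nontrivial directions are $(6),(7),(8)\Rightarrow(1)$. For $(6)$, \Cref{thm:e-mu-pd-id}.(1) yields $\pd_R(N)<\infty$; for $(7)$, \Cref{thm:pd-e-type-test} yields $\pd_R(N)<\infty$; for $(8)$, \Cref{thm:e-mu-pd-id}.(2) yields $\id_R(N)<\infty$. Combined with $N$ having minimal multiplicity and $e(N)\ge 2\type(N)$ (resp.~$e(N)\ge 2\mu(N)$), the appropriate item of \cite[Cor.~5.5]{DGS} (treating the $\ge$-inequality under minimal multiplicity and finite projective or injective dimension) forces $\cx_R(k)\le 1$, so $R$ is a hypersurface by \Cref{prop:cx-curv-facts}.(2); in case $(8)$, $R$ being Gorenstein upgrades $\id_R(N)<\infty$ to $\pd_R(N)<\infty$, settling the ``moreover'' claim. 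To bound $e(R)\le 2$, pick an $N$-regular s.o.p.~$\mathbf{x}$ with $\fm^2 N=(\mathbf{x})\fm N$ and $e(N)=\lambda(N/(\mathbf{x})N)$, set $L:=N/(\mathbf{x})N$ (of finite length with $\fm^2 L=0$), apply Matlis duality in the $\type$ cases to produce an $R/\fm^2$-module with $\lambda\ge 2\mu$ and still finite projective dimension (using the Gorensteinness of the hypersurface $R$), and then apply \cite[Thm.~3.8]{IMW} to get $e(R)\mu(L^\vee)\le \lambda(L^\vee)\le 2\mu(L^\vee)$, whence $e(R)\le 2$. Finally, $R$ cannot be a field, for otherwise $N\cong k^r$ with $r>0$ would give $e(N)=\mu(N)=\type(N)=r$, contradicting $e(N)\ge 2\type(N)$ or $e(N)\ge 2\mu(N)$. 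The main obstacle is identifying the correct variant of \cite[Cor.~5.5]{DGS} for the $\ge$-inequalities under the minimal multiplicity hypothesis; once this is in hand, the rest is a parallel adaptation of \Cref{thm:char-hypersurface-e-le-2}.
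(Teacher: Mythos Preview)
Your overall architecture is correct and closely parallels the paper's, but the argument for $e(R)\le 2$ in $(6),(7),(8)\Rightarrow(1)$ contains a genuine error. You write that after Matlis duality one obtains a module with $\lambda\ge 2\mu$ and then apply \cite[Thm.~3.8]{IMW} to conclude $e(R)\mu(L^\vee)\le\lambda(L^\vee)\le 2\mu(L^\vee)$. But the hypothesis $e(N)\ge 2\type(N)$ gives $\lambda(L^\vee)=\lambda(L)=e(N)\ge 2\type(N)=2\mu(L^\vee)$, i.e.\ $\lambda(L^\vee)\ge 2\mu(L^\vee)$, which is the \emph{wrong direction} for the IMW bound; from $e(R)\mu(L^\vee)\le\lambda(L^\vee)$ and $\lambda(L^\vee)\ge 2\mu(L^\vee)$ nothing forces $e(R)\le 2$. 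The same problem occurs in the $\mu$ case. The paper handles this by a case split: when $e(N)>2\type(N)$ (resp.\ $>2\mu(N)$), one invokes \Cref{thm:char-reg-dim-2} (specifically the implication $(4)\Rightarrow(1)$ or $(5)\Rightarrow(1)$ there) to conclude that $R$ is regular, hence $e(R)=1$; only in the equality case $e(N)=2\type(N)$ (resp.\ $=2\mu(N)$) does the IMW argument from the proof of \Cref{thm:char-hypersurface-e-le-2} go through, because then $\lambda(L^\vee)=2\mu(L^\vee)$ and the chain $e(R)\mu(L^\vee)\le\lambda(L^\vee)=2\mu(L^\vee)$ gives $e(R)\le 2$.

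A secondary point: the complexity result you need for the $\ge$-inequalities under minimal multiplicity is \cite[Cor.~6.6]{DGS}, not \cite[Cor.~5.5]{DGS} (the latter treats the $\le$-inequalities without the minimal-multiplicity assumption). You flagged this as uncertain, and indeed the paper uses items (3), (5) of \cite[Cor.~6.6]{DGS} here. Your construction for $(1)\Rightarrow(2),(3)$ in the regular case via $N=R/(x_1,\dots,x_{d-n-1},x_{d-n}^2)$ is correct and essentially equivalent to the paper's choice of $x\in\fm^2\smallsetminus\fm^3$ followed by a further regular sequence.
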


\begin{proof}
Passing to the faithfully flat extension $R[X]_{\fm[X]}$, we may assume that $R$ has infinite residue field.

(1) $\Longrightarrow$ (2) and (1) $\Longrightarrow$ (3): Set $d:=\dim(R)$. First, assume that $R$ is not regular. Hence $e(R)=2$. Choose an $R$-regular sequence $\mathbf x := x_1,\ldots,x_d$  as in Remark~\ref{rmk:superficial}. Fix $n$ with $0\le n\le d$, and put $N:=R/(x_1,\dots,x_{d-n})$. Then, $\pd_R(N)<\infty$ (hence $\id_R(N)<\infty$ as $R$ is in particular Gorenstein), and $N$ is a CM $R$-module of dimension $n$. Moreover, $2\type(N)=2\type(R)=2=e(R)=e(N)$ and $2\mu(N)=2=e(N)$. That $N$ has minimal multiplicity as an $R$-module follows from Remark~\ref{rmk:superficial}. 

Next, assume that $R$ is regular. Then, since $R$ is not a field, it follows that $R$ is not Artinian, i.e., $d\ge 1$. So one can choose $x\in \fm^2\smallsetminus \fm^3$. Set $R':=R/(x)$. Note that $R'$ is not regular. Pick an $R'$-regular sequence $x_1,\ldots,x_{d-1}$ as in Remark~\ref{rmk:superficial}. Fix $n$ with $0 \le n \le d-1$. Put $N:=R'/(x_1,\dots,x_{d-1-n})$. Then, $\pd_R(N)<\infty$, $\id_R(N)<\infty$, and $N$ is a CM $R$-module of dimension $n$. Also, $e(N)=e(R')=2$, where the last equality follows from \cite[11.2.8]{SH06}. Hence, $e(N)=2=2\type(N)$ and $e(N)=2\mu(N)$. Since $R'$ has minimal multiplicity, so does $N$ by Remark~\ref{rmk:superficial}. 

(2) $\Longrightarrow$ (4)  and (3) $\Longrightarrow$ (5): The implications are trivial.

(4) $\Longrightarrow$ (6), (4) $\Longrightarrow$ (7) and (5) $\Longrightarrow$ (8): These implications are obvious by setting $M:=k$.

(6) $\Longrightarrow$ (1): In view of Theorem~\ref{thm:e-mu-pd-id}.(1), one obtains that $\pd_R(N)<\infty$, i.e., $\cx_R(N)= 0$. Hence \Cref{DGS.6.6}.(3) yields that $\cx_R(k) \le 1$, which implies that $R$ is a hypersurface by Proposition~\ref{prop:cx-curv-facts}.(2). If $R$ is a field, then $N$ is a vector space, hence $e(N)=\type(N)$, a contradiction to the hypothesis. Thus $R$ is not a field. Now, if $e(N)>2\type(N)$, then $R$ is regular by \Cref{thm:char-reg-dim-2}, hence $e(R)=1$. If $e(N)=2\type(N)$, in view of the proof of \Cref{thm:char-hypersurface-e-le-2}.((7) $\Longrightarrow$ (1)), one gets that $e(R)\le 2$.

(7) $\Longrightarrow$ (1): By virtue of Theorem~\ref{thm:pd-e-type-test}, one obtains that $\pd_R(N)<\infty$, i.e., $\cx_R(N)= 0$. Hence the proof is similar as that of (6) $\Longrightarrow$ (1).

(8) $\Longrightarrow$ (1): By Theorem~\ref{thm:e-mu-pd-id}.(2), one gets that $\id_R(N)<\infty$, i.e., $\injcx_R(N) = 0$. Hence, in view of \Cref{DGS.6.6}.(1), $\cx_R(k) \le 1$. So $R$ is a hypersurface. In particular, since $\id_R(N)<\infty$, it follows that $\pd_R(N)<\infty$. If $R$ is a field, then $N$ is a vector space, hence $e(N)=\mu(N)$, a contradiction. Thus $R$ is not a field. Now, if $e(N)>2\mu(N)$, then $R$ is regular by \Cref{thm:char-reg-dim-2}, hence $e(R)=1$. If $e(N)=2\mu(N)$, in view of the proof of \Cref{thm:char-hypersurface-e-le-2}.((6) $\Longrightarrow$ (1)), $e(R)\le 2$. 
\end{proof} 

% \new{
% Setting $M:=k$ in Proposition~\ref{thm:char-CI-rings-of-codim-le-2-Ext-Tor-vanishing}, one obtains the following.
% \begin{corollary}\label{cor:char-CI-rings-of-codim-le-2-hom-dim}
%     The ring $R$ is complete intersection of codimension $\le 2$ if at least one of the following holds. \com{We will probably get hypersurface.}
%     \begin{enumerate}[\rm (1)]
%         % \item $R$ admits a nonzero CM module $N$ such that $ e(N) \le 2 \mu(N) $ and $\pd_R(N)<\infty$.
%         % \item $R$ admits a nonzero CM module $N$ such that $e(N) \le 2 \type(N) $ and $\id_R(N)<\infty$.
%         \item $R$ admits a nonzero module $N$ of minimal multiplicity such that $e(N) \ge 2 \mu(N) $ and $\id_R(N)<\infty$.
%         \item $R$ admits a nonzero module $N$ of minimal multiplicity such that $e(N) \ge 2 \type(N) $ and $\pd_R(N)<\infty$.
%     \end{enumerate}
% \end{corollary}
% }

We conclude this section by providing some criteria for a Gorenstein local ring to be regular in terms of self Ext vanishing involving certain modules of minimal multiplicity.

\begin{corollary}\label{cor:char-RLR-over-Gor}
	The following statements are equivalent for a Gorenstein ring $R$:
    \begin{enumerate}[\rm (1)]
        \item 
        $R$ is regular.
        \item 
        For every $n$ with $0\le n \le \dim(R)$, there exists a nonzero $R$-module $M$ of minimal multiplicity and dimension $n$ such that $e(M)\neq 2\mu(M)$ and $\Ext^{\gg 0}_R(M,M)=0$.
        \item 
        For every $n$ with $0\le n \le \dim(R)$, there exists a nonzero $R$-module $M$ of minimal multiplicity and dimension $n$ such that $e(M)\neq 2\type(M)$ and $\Ext^{\gg 0}_R(M,M)=0$.
        \item 
        $R$ admits a nonzero CM module $M$ of minimal multiplicity such that $e(M) \neq 2\mu(M)$ and $\Ext^{\gg 0}_R(M,M)=0$.
        \item 
        $R$ admits a nonzero CM module $M$ of minimal multiplicity such that $e(M) \neq 2\type(M)$ and $\Ext^{\gg 0}_R(M,M)=0$.
        \item 
        $R$ admits a nonzero module $M$ of minimal multiplicity such that $e(M)\neq 2\mu(M)$ and $\Ext^m_R(M,M)=0$ for  $m=j+1, \ldots, \max\{j+2s+\sum_{i=0}^s {s \choose i} \mu^{j+s+i}_R(M), j+\beta_j^{R}(M)+s\}$ for some $j\ge 0$, where  $s=\dim(M)$.        
        % \item $R$ is Cohen-Macaulay, admits a canonical module $\omega$ and a module $M$ of minimal multiplicity such that $e(M)\neq 2\mu(M)$, and $\Ext^n_R(\omega, M)=0=\Ext^m_R(M,M)=\Ext^l_R(M,R)$ for $l=j-\dim(M)+1, \ldots, j+s+\sum_{i=0}^s {s \choose i} \mu^{j+i}_R(R)$, $m=a-\dim(M)+1, \ldots, a+g+\sum_{i=0}^g {g \choose i} \mu^{a+i}_R(M)$, where $s=\dim R, g=\dim(M)$.
        \item 
        $R$ admits a nonzero module $M$ of minimal multiplicity  such that $e(M)\neq 2\type(M)$ and $\Ext^m_R(M,M)=0$ for $m$ varies in the same range as in {\rm (6)}.
    \end{enumerate}
\end{corollary}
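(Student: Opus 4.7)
The plan is to establish the cycles $(1) \Rightarrow (2) \Rightarrow (4) \Rightarrow (6) \Rightarrow (1)$ and $(1) \Rightarrow (3) \Rightarrow (5) \Rightarrow (7) \Rightarrow (1)$, invoking throughout the classical fact that over a Gorenstein local ring, a finitely generated module has finite projective dimension if and only if it has finite injective dimension.

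For $(1) \Rightarrow (2)$ and $(1) \Rightarrow (3)$, given $R$ regular of dimension $d$, I would take a regular system of parameters $x_1, \ldots, x_d$ and, for each $0 \le n \le d$, set $M := R/(x_1, \ldots, x_{d-n})$. Then $M$ is a regular local ring of dimension $n$, and hence has minimal multiplicity as an $R$-module by Remarks~\ref{rmk:superficial} and \ref{rmk:min-mult-R-M}. Moreover $e(M) = \mu(M) = \type(M) = 1$, so $e(M) \ne 2\mu(M)$ and $e(M) \ne 2\type(M)$, and $\pd_R(M) = d - n < \infty$ forces $\Ext_R^{\gg 0}(M,M) = 0$. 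The implications $(2) \Rightarrow (4)$ and $(3) \Rightarrow (5)$ are immediate. For $(4) \Rightarrow (6)$ and $(5) \Rightarrow (7)$, if $\Ext_R^n(M,M) = 0$ for all $n \ge N_0$, then any $j \ge N_0 - 1$ makes the entire finite range in $(6)$ or $(7)$ lie inside the vanishing zone.

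The heart of the proof is $(6) \Rightarrow (1)$, with $(7) \Rightarrow (1)$ parallel. The key observation is that the maximum appearing in the Ext-range is a disjunction of two standard ranges, each matching a different test theorem from \Cref{sec:test-modules}. Write $s := \dim(M)$; since $M$ has minimal multiplicity, $M$ is CM. I would then split on the sign of $e(M) - 2\mu(M)$. If $e(M) < 2\mu(M)$, the substitution $j' := j + s$ rewrites the first sub-range $[j+1,\; j + 2s + \sum_{i=0}^s \binom{s}{i} \mu^{j+s+i}_R(M)]$ as $[j' - s + 1,\; j' + s + \sum_{i=0}^s \binom{s}{i} \mu^{j'+i}_R(M)]$, which is exactly the range required by Theorem~\ref{thm:e-mu-pd-id}.(2) with $N := M$; this yields $\id_R(M) < \infty$, hence $\pd_R(M) < \infty$ by the Gorenstein equivalence, and \Cref{thm:char-reg} then forces $R$ regular. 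If $e(M) > 2\mu(M)$, the second sub-range $[j+1,\; j + \beta_j^R(M) + s]$ matches Theorem~\ref{thm:Ext-pd-id-test}.(1) with $N := M$, producing $\pd_R(M) < \infty$, and \Cref{thm:char-reg-dim-2} concludes that $R$ is regular.

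For $(7) \Rightarrow (1)$, the parallel case split on $e(M) - 2\type(M)$ proceeds as follows: when $e(M) > 2\type(M)$, the first sub-range feeds into Theorem~\ref{thm:Ext-pd-id-test}.(4), giving $\id_R(M) < \infty$, hence $\pd_R(M) < \infty$, and then \Cref{thm:char-reg-dim-2} gives $R$ regular; when $e(M) < 2\type(M)$, the second sub-range feeds into Theorem~\ref{thm:pd-e-type-test}, giving $\pd_R(M) < \infty$ directly, and \Cref{thm:char-reg} concludes. The principal obstacle I anticipate is the bookkeeping: one must decode the intentionally composite ranges in $(6)$ and $(7)$ as a disjunction of the two standard ranges appearing in Theorems~\ref{thm:e-mu-pd-id}, \ref{thm:pd-e-type-test}, and \ref{thm:Ext-pd-id-test}, and verify in each of the four sub-cases that the applied test theorem's hypotheses (CM-ness, which is automatic from minimal multiplicity, together with the correct strict inequality) are met before invoking the Gorenstein equivalence to route every conclusion through $\pd_R(M) < \infty$.
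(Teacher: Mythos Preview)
Your proposal is correct and follows essentially the same architecture as the paper's proof: the same cycles, the same construction $M=R/(x_1,\ldots,x_{d-n})$ for $(1)\Rightarrow(2),(3)$, the same case split on the sign of $e(M)-2\mu(M)$ (resp.\ $e(M)-2\type(M)$), and the same test theorems (\ref{thm:e-mu-pd-id}.(2), \ref{thm:Ext-pd-id-test}.(1),(4), \ref{thm:pd-e-type-test}) applied to the two sub-ranges after the shift $j'=j+s$. The only difference is cosmetic: once you have $\pd_R(M)<\infty$ or $\id_R(M)<\infty$, you invoke the paper's own \Cref{thm:char-reg} and \Cref{thm:char-reg-dim-2} to conclude regularity, whereas the paper cites the underlying results from \cite{DGS} (Cor.~5.5, 6.6, 7.15) directly; your route is equally valid and arguably cleaner within the paper's internal logic.
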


\begin{proof}
The implications (2) $\Longrightarrow$ (4) $\Longrightarrow$ (6) and (3) $\Longrightarrow$ (5) $\Longrightarrow$ (7) are obvious.

(6) $\Longrightarrow$ (1): First, assume that $e(M)< 2\mu(M)$. Then, since
% $\Ext^l_R(M,R)=0$ for $l=j-\dim(M)+1, \ldots, j+s+\sum_{i=0}^s {s \choose i} \mu^{j+i}_R(R)$ implies $\id(R)<\infty$ by \Cref{thm:e-mu-pd-id}(2), hence $R$ is Gorenstein.
$\Ext^m_R(M,M)=0$ for $m=j+1, \ldots, j+2s+\sum_{i=0}^s {s \choose i} \mu^{j+s+i}_R(M)$, in view of \Cref{thm:e-mu-pd-id}.(2), $\id_R(M)<\infty$. Thus, since $R$ is Gorenstein, $\pd_R(M)<\infty$. Therefore, by \cite[Cor.~7.15]{DGS}, $R$ is regular. In the other case, assume that $e(M)>2\mu(M)$. Then, since $\Ext^m_R(M,M)=0$ for $m=j+1, \ldots, j+\beta_j^{R}(M)+s$, \Cref{thm:Ext-pd-id-test}.(1) yields that $\pd_R(M)<\infty$.  As $R$ is Gorenstein, it follows that $\id_R(M)<\infty$. Therefore, by \Cref{DGS.6.6}.(2), $\cx_R(k)=0$, i.e., $\pd_R(k)<\infty$, and hence $R$ is regular.  

(7) $\Longrightarrow$ (1): First, assume that $e(M) > 2\type(M)$. Then, since $\Ext^n_R(M,M)=0$ for $n=j+1, \ldots, j+2s+\sum_{i=0}^s {s \choose i} \mu^{j+i}_R(M)$, by \Cref{thm:Ext-pd-id-test}.(4) yields that $\id_R(M)<\infty$. Thus, since $R$ is Gorenstein, $\pd_R(M)<\infty$. Hence, by \Cref{DGS.6.6}.(4), $R$ is regular. Next, assume that $e(M)< 2\type(M)$. Then, since $\Ext^m_R(M,M)=0$ for $m=j+1, \ldots, j+\beta_j^{R}(M)+s$, in view of \Cref{thm:pd-e-type-test}, $\pd_R(M)<\infty$.  So $\id_R(M)<\infty$ as $R$ is Gorenstein. Finally, by \Cref{DGS.5.5}.(4), $R$ is regular.

(1) $\Longrightarrow$ (2) and (1) $\Longrightarrow$ (3): Set $d:=\dim(R)$. The maximal ideal $\fm$ is generated by a regular sequence $x_1,\ldots,x_d$. For $0\le n\le d$, set $M:=R/(x_1,\ldots,x_{d-n})$. Then, $M$ is CM of dimension $n$, and $e(M)=1$ as $R/(x_1,\ldots,x_{d-n})$ is a regular ring. In particular, $M$ has minimal multiplicity, and it satisfies $e(M)=1\neq 2= 2\mu(M)= 2\type(M)$. The Ext vanishing is obvious as $R$ is regular. 
\end{proof}

\begin{remark}
    The assumption that $R$ is Gorenstein in Corollary~\ref{cor:char-RLR-over-Gor} cannot be removed. Consider an Artinian local ring $(R,\fm)$ such that $\fm^2=0$ and $\mu(\fm)\ge 2$, see \cite[Example~5.8]{DGS}. Then $M:=R$ satisfies all the conditions (2) - (7) of \Cref{cor:char-RLR-over-Gor}, however $R$ is not regular.
\end{remark}

\begin{remark}
    The conditions `$ e(M) \neq 2 \mu(M) $' and `$ e(M) \neq 2 \type(M) $' cannot be omitted from \Cref{cor:char-RLR-over-Gor}. As in Example~\ref{exam:strict-ineq-cant-be-eq}, let $R$ be a hypersurface of multiplicity $2$. Set $M:=R$. Then $M$ is a CM $R$-module of minimal multiplicity such that $e(M)=2=2\mu(M)=2\type(M)$. Clearly, $\Ext_R^{\ge 1}(M,M)=0$. However, $R$ is not regular.
\end{remark}

\begin{remark}
    Without the condition `minimal multiplicity', \Cref{cor:char-RLR-over-Gor} does not necessarily hold true. Let $R$ be a Gorenstein Artinian local ring of Loewy length $\ge 3$ (e.g., $R=k[x]/(x^n)$ for some $n\ge 3$). Then $M:=R$ satisfies all the conditions (2) - (7) in \Cref{cor:char-RLR-over-Gor} except that $M$ does not have minimal multiplicity. Here $R$ is not regular.
\end{remark}

\section{Characterizations of Gorenstein and CM local rings}\label{sec:char-Gor-CM}

We show a few characterizations of Gorenstein and CM local rings as well. In the following result, comparing the conditions (2) and (3), note that ``$\Ext_R^{\gg 0}(M,R) = 0$" is a much weaker condition than ``$\gdim_R(M)< \infty$", see \cite[p.~218]{JS06}.
% For an $R$-module $M$, if $\gdim_R(M)< \infty$, then $\Ext_R^n(M,R) = 0$ for all $n \gg 0$. Therefore, under the restrictive condition that $e(M) < 2 \mu(M)$,
% % (than $e(M) \le 2 \mu(M)$),
% % (when $\dim(M)=0$, it is equivalent to that $\lambda(M) < 2 \mu(M)$),
% the implication (3) $\Rightarrow$ (1) below considerably strengthens the result \cite[Cor.~2.6]{Ta04} of Takahashi.

\begin{corollary}\label{cor:char-Gor}
    The following statements are equivalent.
    \begin{enumerate}[\rm (1)]
        \item $R$ is Gorenstein.
        \item $R$ admits a nonzero CM module $M$ of minimal multiplicity such that $e(M) \le 2 \mu(M)$ and $\gdim_R(M)< \infty$.
        \item 
        $R$ admits a CM module $M$ such that $e(M) < 2 \mu(M)$ and $\Ext_R^{\gg 0}(M,R) = 0$.
        \item 
        $R$ admits two nonzero modules $M$ and $N$ such that $e(M) < 2 \mu(M)$, $\pd_R(N)<\infty$ and $\Ext_R^{\gg 0}(M,N) = 0$. Suppose that $M$ is CM, and that $R$ or $N$ is CM.
    \end{enumerate}
\end{corollary}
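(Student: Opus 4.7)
The plan is to establish $(1) \Rightarrow (2), (3), (4)$ directly by taking $M = k$ and $N = R$ as explicit witnesses, and to derive each converse by combining the vanishing-of-Ext results of \Cref{sec:test-modules} with Foxby's theorem, which asserts that a Noetherian local ring admitting a nonzero finitely generated module of simultaneously finite projective dimension and finite injective dimension must be Gorenstein.

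For $(1) \Rightarrow (2), (3), (4)$, I take $M := k$, the residue field. It is a zero-dimensional CM module with $\fm^2 k = 0$, hence of minimal multiplicity by \Cref{defn:cm-min-mult}, satisfying $e(k) = \mu(k) = \type(k) = 1 < 2\mu(k)$. Since $R$ is Gorenstein, $\id_R R < \infty$, so $\Ext_R^{\gg 0}(k, R) = 0$ and, by Auslander's criterion, $\gdim_R(k) < \infty$. For the second module in (4), set $N := R$, which is CM with $\pd_R R = 0$.

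For $(3) \Rightarrow (1)$ (which, together with $(1) \Rightarrow (4)$, yields $(3) \Rightarrow (4)$), I would apply \Cref{thm:Ext-pd-id-test}(2) with $N = R$: since $\Ext_R^{\gg 0}(M, R) = 0$ covers the required vanishing range $n = j - \dim(M) + 1, \dots, j + \mu_R^j(R)$ for every sufficiently large $j \ge \depth R$, the theorem gives $\id_R R < \infty$, i.e., $R$ Gorenstein. For $(4) \Rightarrow (1)$, \Cref{thm:e-mu-pd-id}(2) applied to the given CM pair $(M, N)$ yields $\id_R N < \infty$; combined with $\pd_R N < \infty$ and $N \ne 0$, Foxby's theorem then forces $R$ to be Gorenstein.

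For $(2) \Rightarrow (1)$, the hypothesis $\gdim_R M < \infty$ gives $\Ext_R^{\gg 0}(M, R) = 0$ automatically. When $e(M) < 2\mu(M)$ holds strictly, \Cref{thm:Ext-pd-id-test}(2) with $N = R$ again produces $\id_R R < \infty$, so $R$ is Gorenstein. I expect the boundary case $e(M) = 2\mu(M)$ to be the main obstacle: reduction modulo a system of parameters realizing minimal multiplicity produces $L = M/(\mathbf x)M$ with $\fm^2 L = 0$ and $\lambda(L) = 2\mu(L)$, precisely the equality case not accommodated by \Cref{prop:lambda-mu-id-pd} or \Cref{prop:pd-id-test-m2M-zero}. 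To handle it, one can try to exploit the totally reflexive syzygies of $M$ afforded by $\gdim_R M < \infty$ — producing a related CM module of minimal multiplicity with the strict inequality $e < 2\mu$ so that the strict-case argument applies — or else refine the underlying Betti/Bass inequalities to accommodate equality under an additional finite-$G$-dimension hypothesis.
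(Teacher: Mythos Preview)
Your treatment of $(1)\Rightarrow(2),(3),(4)$, $(3)\Rightarrow(1)$, and $(4)\Rightarrow(1)$ matches the paper's proof exactly: $M=k$ (and $N=R$) witnesses the forward implications, \Cref{thm:Ext-pd-id-test}(2) handles $(3)\Rightarrow(1)$, and \Cref{thm:e-mu-pd-id}(2) together with Foxby's theorem \cite[Cor.~4.4]{Fo77} handles $(4)\Rightarrow(1)$.

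The gap is in $(2)\Rightarrow(1)$. You correctly isolate the boundary case $e(M)=2\mu(M)$ as the obstruction, but your two suggested strategies are not worked out and are unlikely to succeed as stated: passing to totally reflexive syzygies does not obviously produce a module with the \emph{strict} inequality $e<2\mu$, and the Betti/Bass inequalities underlying \Cref{prop:lambda-mu-id-pd} and \Cref{prop:pd-id-test-m2M-zero} genuinely degenerate at equality (cf.\ \Cref{exam:strict-ineq-cant-be-eq}). The paper avoids this case split entirely. Using \Cref{lem:min-mult-mod}, it reduces modulo a system of parameters realizing minimal multiplicity to obtain $L:=M/(\mathbf{x})M$ with $\fm^2L=0$, $\lambda(L)=e(M)\le 2\mu(M)=2\mu(L)$, and $\gdim_R(L)<\infty$ (G-dimension is stable under quotienting by an $M$-regular sequence). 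It then invokes Takahashi's criterion \cite[Cor.~2.6]{Ta04}: a local ring admitting a nonzero module $L$ with $\fm^2L=0$, $\lambda(L)\le 2\mu(L)$, and $\gdim_R(L)<\infty$ is Gorenstein. This result is tailored to exploit finite G-dimension in the non-strict inequality regime, which is precisely what your Ext-vanishing machinery cannot reach.
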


\begin{proof}
	If $R$ is Gorenstein, then $M=k$ satisfies all conditions in (2), (3) and (4).
    % So we need to prove (2) $\Rightarrow$ (1) and (3) $\Rightarrow$ (1).
    
    (2) $\Longrightarrow$ (1): We may assume that $k$ is infinite. In view of \Cref{lem:min-mult-mod}.(3), there exists a system of parameters ${\bf x}$ of $M$ such that $\fm^2M = ({\bf x}) \fm M$ and $e(M) = \lambda(M/{\bf x}M)$. Since $M$ is CM, ${\bf x}$ is also an $M$-regular sequence. Thus, setting $N:=M/{\bf x}M$, we have that $\fm^2N=0$, $\lambda(N) = e(M) \le 2 \mu(M) = 2 \mu(N)$ and $\gdim_R(N)< \infty$. Therefore, by \cite[Cor.~2.6]{Ta04}, $R$ is Gorenstein.

    (3) $\Longrightarrow$ (1): By \Cref{thm:e-mu-pd-id}.(1), $M$ is a Tor-pd-test $R$-module. So, by \cite[Thm.~4.4]{CS16}, $R$ is Gorenstein.

    (4) $\Longrightarrow$ (1): In view of Theorem~\ref{thm:e-mu-pd-id}.(2) and (3), $\id_R(N)<\infty$. Hence, by \cite[Cor.~4.4]{Fo77}, $R$ is Gorenstein.
\end{proof}

% \new{
% \begin{corollary}\label{cor:char-Gor-new}
%  The following statements are equivalent.
%     \begin{enumerate}[\rm (1)]
%         \item $R$ is Gorenstein.
%         \item $R$ admits a nonzero CM module $M$ such that $e(M) < 2 \mu(M)$ and $\gdim_R(M)< \infty$.
%         \item $R$ admits a CM module $M$ such that $e(M) < 2 \mu(M)$ and $\Ext_R^{\gg 0}(M,R) = 0$.
%     \end{enumerate}
% \end{corollary}

% \begin{proof}
%     If $R$ is Gorenstein, then $M=k$ satisfies all the conditions in (2) as well as in (3).

%     (2) $\Rightarrow$ (3): This implication is straight forward.

%     (3) $\Rightarrow$ (1): It follows from Theorem~\ref{thm:e-mu-pd-id}.(2).
%      %Since $\Ext_R^{\gg 0}(M,R) = 0$ then $\Ext_R^{\gg 0}(M,R/({\bf x})) = 0$, where ${\bf x}$ is maximal $R$-regular sequence. Therefore by Theorem~\ref{thm:e-mu-pd-id}.(3) $\id_R(R)< \infty$.
% \end{proof}
% }

A natural question arises whether the condition `$e(M) < 2 \mu(M)$' in Corollary~\ref{cor:char-Gor}.(3) can be replaced by `$e(M) \le 2 \mu(M)$'.

\begin{question}\label{ques:char-Gor-via-Ext-vanishing}
    If $R$ admits a nonzero CM module $M$ of minimal multiplicity such that $e(M) = 2 \mu(M)$ and $\Ext_R^{\gg 0}(M,R) = 0$, then is $R$ Gorenstein, or equivalently, is $\gdim_R(M)$  finite?
    
    Equivalently, if $R$ admits a nonzero module $M$ such that $\fm^2 M=0$, $ \lambda(M) = 2 \mu(M) $ and $\Ext_R^{\gg 0}(M,R) = 0$, then is $R$ Gorenstein, or equivalently, is $\gdim_R(M)$  finite?
\end{question}

We observe that \Cref{ques:char-Gor-via-Ext-vanishing} has a positive answer for Golod rings. For that, we first record a lemma.

\begin{lemma}\label{lem:Golod-Ext-free}
Let $R$ be a Golod ring, which is not a hypersurface. Let $M$ be an $R$-module. If $\Ext_R^i(M,R)=0$ for all $1\le i\le 2\mu(\fm)-\depth(M)$, then $M$ is free. 
\end{lemma}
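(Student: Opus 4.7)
I would argue by contradiction: assume $M$ is not free and derive a contradiction.

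\textbf{Step 1 (Reduction to the MCM case).} Set $d := \depth(R)$, $s := \depth(M)$, and $t := d - s \ge 0$. Let $N := \Omega^t_R M$ be the $t$-th syzygy of $M$ in a minimal free resolution, so that $\depth(N) = d$, i.e., $N$ is MCM. Dimension shifting gives $\Ext^i_R(N, R) \cong \Ext^{i+t}_R(M, R) = 0$ for $1 \le i \le 2\mu(\fm) - d$. Suppose the MCM case of the lemma has been established; then $N$ is free, so $\pd_R(M) \le t$ is finite, and Auslander--Buchsbaum forces $\pd_R(M) = t$. If $t \ge 1$, minimality of the resolution gives $\Ext^t_R(M, R) \ne 0$; but $t \le \mu(\fm) - s \le 2\mu(\fm) - s$ places $t$ in the hypothesis range, a contradiction. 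Hence $t = 0$, $M = N$ is MCM, and the MCM case directly forces $M$ free, contradicting our assumption. It therefore suffices to prove: \emph{an MCM module $M$ with $\Ext^i_R(M, R) = 0$ for $1 \le i \le 2\mu(\fm) - d$ is free.}

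\textbf{Step 2 (Golod structure forces $\pd_R(M) < \infty$).} With $M$ MCM, Auslander--Buchsbaum yields $\pd_R(M) = 0$ as soon as $\pd_R(M) < \infty$, so it suffices to rule out $\pd_R(M) = \infty$. Since $R$ is not a hypersurface, the codepth $c := \mu(\fm) - d$ satisfies $c \ge 2$ (a standard Cohen-structure consequence: codepth $\le 1$ implies $\widehat{R}$ is a regular local ring modulo a single element, hence $R$ is a hypersurface). Thus the vanishing range $[1,\, 2\mu(\fm) - d] = [1,\, \mu(\fm) + c]$ has length at least $\mu(\fm) + 2$. I would now invoke the Golod hypothesis: over a Golod ring, the Poincar\'e series of any module and the Bass series of $R$ are rational with common denominator $1 - t H(t)$, where $H(t) = \sum_{i \ge 1} \dim_k H_i(K^R)\, t^i$ encodes the Koszul homology (Lescot--Avramov rationality). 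Combined with a composition-of-functors (Cartan--Eilenberg-type) spectral sequence expressing $\Ext^*_R(M, R)$ in terms of $\Ext^*_R(M, k)$ and $\Ext^*_R(k, R)$, or equivalently via the Yoneda action of $\Ext^*_R(k, k)$ (which for Golod $R$ is a free tensor algebra modulo trivial Massey relations), one shows that when $c \ge 2$ the graded object $\Ext^*_R(M, R)$ cannot have a vanishing gap of length $\mu(\fm) + c$ starting at degree $1$ unless $\pd_R(M) < \infty$. This contradicts the assumed vanishing, yielding $\pd_R(M) < \infty$, and hence $M$ free by Auslander--Buchsbaum.

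\textbf{Main obstacle.} The delicate point is Step 2: extracting the precise numerical bound $2\mu(\fm) - \depth(M)$ from the Golod/non-hypersurface hypothesis. The natural technical route is via Lescot--Avramov rationality together with careful bookkeeping in the Ext spectral sequence, but one could also attempt a more direct argument through the Auslander transpose $\tr M$ and its syzygies: the Ext vanishing equips $\tr M$ with a long free-resolution prefix coming from the dualized minimal resolution of $M$, and over a Golod non-hypersurface ring the exponential growth of Betti numbers (when $\pd_R(M) = \infty$) collides with this forced structure. Either route ultimately rests on the same Golod structural fact, and making the constant exactly $2\mu(\fm) - \depth(M)$ is the crux of the argument.
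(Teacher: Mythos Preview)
Your Step~1 reduction is essentially fine (modulo calling $N$ ``MCM'' when $R$ need not be Cohen--Macaulay; what you actually use is $\depth(N)=\depth(R)$). The problem is Step~2: it is not a proof. There is no standard spectral sequence expressing $\Ext^*_R(M,R)$ in terms of $\Ext^*_R(M,k)$ and $\Ext^*_R(k,R)$ in the way you suggest, and invoking Poincar\'e-series rationality in the abstract does not by itself produce the precise window $2\mu(\fm)-\depth(M)$. You correctly identify this as the main obstacle, but you do not overcome it.

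The paper executes precisely the ``alternative'' you mention but do not carry out, and the missing input is a sharp inequality of Avramov \cite[5.3.3.(5)]{Avr98}: over a Golod ring that is not a hypersurface, every module of infinite projective dimension has \emph{strictly increasing} Betti numbers from an explicit degree onward. The argument is a two-sided application of this. Set $b:=\mu(\fm)$, assume $\pd_R(M)=\infty$, and take $N:=\Omega_R^{b-\depth(M)-1}(M)$ with minimal free resolution $(F_i,\Phi_i)$; Avramov's inequality gives $\mu(F_{i+1})>\mu(F_i)$ for all $i\ge 1$. The hypothesis on $M$ translates to $\Ext_R^i(N,R)=0$ for $1\le i\le b+1$, so dualizing the truncation $F_{b+2}\to\cdots\to F_0\to N\to 0$ yields an exact sequence $0\to N^*\to F_0\to F_1\to\cdots\to F_{b+2}\to X\to 0$ with all transposed differentials minimal; hence $\mu(F_1)=\beta_{b+1}^R(X)$ and $\mu(F_2)=\beta_{b}^R(X)$. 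One then checks $\pd_R(N^*)=\infty$ (the paper cites \cite[Prop.~3.15(2)]{DG24}), whence $\pd_R(X)=\infty$, and Avramov's inequality applied to $X$ at these high degrees forces $\mu(F_1)>\mu(F_2)$, contradicting $\mu(F_2)>\mu(F_1)$. Thus $\pd_R(M)<\infty$, and freeness follows since the vanishing range covers $\pd_R(M)$.

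The decisive idea you are missing is this dualize-and-compare trick: the Ext vanishing turns a long initial segment of the dualized minimal resolution of $N$ into the start of a minimal resolution of some $X$, so the same ranks occur as Betti numbers of both $N$ and $X$ but in reversed order, and strict growth on both sides is impossible.
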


\begin{proof}
Set $b:=\mu(\fm)$. As $R$ is not a hypersurface, it follows that $b-\depth(R)\ge 2$ (see, e.g., \cite[5.1]{Avr98}). Moreover, $\depth(M)\le \dim(R)\le b-1$, where the last inequality holds as $R$ is not regular. If possible, let $\pd_R(M)=\infty$. Set $N:=\Omega_R^{b-\depth(M)-1}(M)$. Let $\{F_{i}\}_{i\ge 0}$ be a minimal free resolution of $N$. Then $\mu(F_i) = \beta_{b-\depth(M)-1+i}^R(M)$ for all $i\ge 0$. So, using \cite[5.3.3.(5)]{Avr98} for $M$, one obtains that $\mu(F_{i+1}) > \mu(F_i)$ for all $i\ge 1$. From the hypothesis, note that $\Ext_R^{1\le i\le b+1}(N,R)=0$. Set $n:=b+2$. Dualizing the exact sequence $F_n\xrightarrow{\Phi_n} F_{n-1}\to \dots\to F_2 \xrightarrow{\Phi_2} F_1\xrightarrow{\Phi_1} F_0\to N\to 0$ by $R$, one gets another exact sequence
$$0\to N^*\to F_0\xrightarrow{\Phi_1^t} F_1\xrightarrow{\Phi_2^t} F_2 \to \dots \to F_{n-1} \xrightarrow{\Phi_n^t} F_n \to X \to 0$$
for some $R$-module $X$. Since each $\Phi_i$ has entries in $\fm$, so does its transpose $\Phi_i^t$, and hence $\mu(F_1)$ and $\mu(F_2)$ are the $(n-1)$st and $(n-2)$nd Betti numbers of $X$ respectively. Since $\pd_R(N)=\infty$, by \cite[Prop.~3.15.(2)]{DG24}, it follows that $\pd_R(N^*)=\infty$. Therefore $\pd_R(X)=\infty$. Since $n-2\ge b$, \cite[5.3.3.(5)]{Avr98} yields that $\mu(F_1)>\mu(F_2)$. This contradicts that $\mu(F_2)>\mu(F_1)$. Thus, we must have that $\pd_R(M)<\infty$. Clearly, $\pd_R(M)\le b$. Since $2b-\depth(M)\ge b$, now \cite[Lem.~1.(iii), p.~154]{Mat86} implies that $M$ is free.
\end{proof}  

% The next proposition shows that \Cref{ques:char-Gor-via-Ext-vanishing} has a positive answer for Golod rings.

% \begin{proposition}\label{prop:G-H-e-mu}
%     Let $R$ be a Golod ring, and it admits a nonzero CM module $M$ such that $e(M) \le 2 \mu(M)$ and $\Ext_R^{\gg 0}(M,R) = 0$. Then $R$ is a hypersurface.
% \end{proposition}

% \begin{proof}
%     If possible, suppose that $R$ is not hypersurface. Given $\Ext_R^{\gg 0}(M,R) = 0$, i.e., there exists $n \ge 0$ such that $\Ext_R^i(M,R) = 0$ for all $i \ge n$. Then $\Ext_R^i(\Omega_R^n(M),R) = 0$ for all $i\ge 1$.
%     % Passing through the high enough syzygy of $M$, by 
%     So, by \Cref{lem:Golod-Ext-free}, $\Omega_R^n(M)$ is free. Hence $\pd_R(M) < \infty$. This contradicts the implication (4) $\Longrightarrow$ (1) of \Cref{thm:char-hypersurface-e-le-2}. Therefore, $R$ is hypersurface.
% \end{proof}

\begin{proposition}
Let $Q$ be a Golod ring, and $x_1,\ldots,x_c$ be a $Q$-regular sequence. Set $R=Q/(x_1,\ldots,x_c)$. Suppose there exists a nonzero CM $R$-module $M$ such that $e_R(M) \le 2 \mu_R(M)$ and $\Ext_R^{\gg 0}(M,R) = 0$. Then, $R$ is a complete intersection of codimension $\le c+1$. 
\end{proposition}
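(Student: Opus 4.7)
The plan is to transfer the Ext vanishing from $R$ to $Q$ via Rees's isomorphism, use the Golod hypothesis on $Q$ to force $\pd_Q(M)<\infty$, and then bound $e(Q)$ via \cite[Thm.~3.8]{IMW} to conclude that $Q$ itself must be a hypersurface, from which the codimension bound on $R$ is immediate. Since $(x_1,\ldots,x_c)M=0$ and $x_1,\ldots,x_c$ is $Q$-regular, iterating Rees's isomorphism yields $\Ext_R^i(M,R)\cong \Ext_Q^{i+c}(M,Q)$ for every $i\ge 0$, so $\Ext_Q^{\gg 0}(M,Q)=0$. If $Q$ is already a hypersurface, write $Q=P/(f)$ with $P$ regular local and $f$ in the maximal ideal of $P$ (possibly zero); lifting each $x_i$ to $\tilde{x}_i\in P$, the sequence $f,\tilde{x}_1,\ldots,\tilde{x}_c$ is $P$-regular, and hence $R\cong P/(f,\tilde{x}_1,\ldots,\tilde{x}_c)$ is a complete intersection of codimension at most $c+1$, as required.

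Assume instead that $Q$ is not a hypersurface. Choose $s$ with $\Ext_Q^{>s}(M,Q)=0$ and set $N:=\Omega_Q^s(M)$; since $\Ext_Q^{\ge 1}(N,Q)=0$, \Cref{lem:Golod-Ext-free} forces $N$ to be $Q$-free, so $\pd_Q(M)\le s<\infty$. After a standard faithfully flat base change $Q\to Q[X]_{\fn[X]}$ making the residue field infinite (and preserving the Golod, Cohen-Macaulay, multiplicity, and minimal-generator invariants in play), the equality $\fn M=\fm M$ gives $\dim_Q(M)=\dim_R(M)$, $\depth_Q(M)=\depth_R(M)$, $\mu_Q(M)=\mu_R(M)$, and $e_Q(M)=e_R(M)$, so $M$ is Cohen-Macaulay over $Q$ with $e(M)\le 2\mu(M)$. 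Pick a system of parameters $\underline{y}$ of $M$ that is a reduction of $\fm$ with respect to $M$; the short exact sequences $0\to M'\xrightarrow{y_j}M'\to M'/y_jM'\to 0$ (with $M':=M/(y_1,\ldots,y_{j-1})M$, each $y_j$ being regular on $M'$) inductively give $\pd_Q(L)\le\pd_Q(M)+|\underline{y}|<\infty$ for $L:=M/\underline{y}M$, a finite-length $Q$-module with $\lambda(L)=e(M)$ and $\mu(L)=\mu(M)$. Applying \cite[Thm.~3.8]{IMW} over $Q$ yields $e(Q)\mu(L)\le\lambda(L)\le 2\mu(L)$, so $e(Q)\le 2$, and the existence of $L$ forces $Q$ to be Cohen-Macaulay by the New Intersection Theorem. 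A Cohen-Macaulay local ring with $e(Q)\le 2$ and infinite residue field is a hypersurface (by inspecting $Q/(\underline{z})$ for a minimal reduction $\underline{z}$ of $\fn$), contradicting the standing assumption of this case.

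The main technical hurdle I anticipate is the bookkeeping needed to verify that all the relevant invariants $e$, $\mu$, $\dim$, $\depth$, together with the Cohen-Macaulay property of $M$ and the Golod property of $Q$, are preserved when passing between $R$ and $Q$ and under the residue-field extension; each step is routine but has to be spelled out carefully for the chain of reductions to be valid.
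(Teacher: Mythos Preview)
Your proposal is correct and follows essentially the same route as the paper. Both arguments transfer the Ext vanishing from $R$ to $Q$, invoke \Cref{lem:Golod-Ext-free} (applied to a high enough syzygy) to force $\pd_Q(M)<\infty$ when $Q$ is not a hypersurface, and then use the inequality $e(Q)\mu(L)\le\lambda(L)$ from \cite[Thm.~3.8]{IMW} on a finite-length reduction $L$ of $M$ to derive a contradiction. The only organizational differences are: the paper first isolates and proves the $c=0$ case and then reduces the general case to it, whereas you work directly over $Q$ throughout; and the paper packages the IMW step by citing the implication (4)\,$\Rightarrow$\,(1) of \Cref{thm:char-hypersurface-e-le-2}, whereas you unpack that implication explicitly. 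Your version is slightly more self-contained, while the paper's is shorter by reusing an already-proved characterization; neither gains any real generality over the other.

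One small remark on your bookkeeping: after you have already established $\pd_Q(M)<\infty$, the Golod property of $Q$ is no longer used, so you need not worry about its preservation under the residue-field extension (though it is indeed preserved). Also, the Cohen--Macaulayness of $Q$ follows more directly from the fact that $M$ is a CM $Q$-module of finite projective dimension (cf.~\cite[9.4.6, 9.4.8]{BH98}, as the paper does in \Cref{thm:ARC-finite-pd}), which is marginally cleaner than invoking the New Intersection Theorem on $L$.
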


\begin{proof}
We first prove the result when $c=0$, i.e., $R$ itself is Golod. In this case, we show that $R$ is hypersurface. If possible, suppose that $R$ is not hypersurface. Given $\Ext_R^{\gg 0}(M,R) = 0$, i.e., there exists $n \ge 0$ such that $\Ext_R^i(M,R) = 0$ for all $i \ge n$. Then $\Ext_R^i(\Omega_R^n(M),R) = 0$ for all $i\ge 1$.
% Passing through the high enough syzygy of $M$, by 
So, by \Cref{lem:Golod-Ext-free}, $\Omega_R^n(M)$ is free. Hence $\pd_R(M) < \infty$. This contradicts the implication (4) $\Longrightarrow$ (1) of \Cref{thm:char-hypersurface-e-le-2}. Therefore, $R$ is hypersurface.
    
Now we consider the general case, i.e., $R=Q/(x_1,\ldots,x_c)$. Since $\Ext_R^{\gg 0}(M,R) = 0$, it follows that $\Ext_Q^{\gg 0}(M,Q) = 0$, see, e.g., \cite[p.~140]{Mat86}. Note that $M$ is a nonzero CM $Q$-module and $e_Q(M)=e_R(M) \le 2 \mu_R(M)=\mu_Q(M)$. By the $c=0$ case, $Q$ is a hypersurface, hence $R$ is a complete intersection of codimension at most $c+1$.    
\end{proof}

We record affirmative answers to \Cref{ques:char-Gor-via-Ext-vanishing} for further special cases in the following remark.
% \com{Need to rewrite the proof of the following - Dipankar.}

\begin{remark}
    Question~\ref{ques:char-Gor-via-Ext-vanishing} has an affirmative answer when $R$ is CM, and it satisfies at least one of the following conditions:
    \begin{enumerate}[\rm (1)]
        \item $R$ has codimension $\le 3$.
        \item $R$ is one link from a complete intersection, and it admits a canonical module.
        \item $R$ is radical cube zero.
    \end{enumerate}
\end{remark}

\begin{proof} Assume the hypotheses as in \Cref{ques:char-Gor-via-Ext-vanishing}.
    If necessary, passing through the completion for case (1), without loss of generality, we may assume, in case (1) also, that $R$  admits a canonical module, say $\omega$. Note that $\omega$ is MCM, and $\Hom_R(\omega,\omega) \cong R$. Therefore, considering $N=\omega$ in \cite[Thm.~5.2]{GP24}, since $\Ext_R^{\gg 0}(M,R) = 0$, one obtains that $\Tor_{\gg 0}^R(M,\omega) = 0$. Hence, by \cite[Thm.~5.3.(1).(ii)]{DGS}, $\cx_R(\omega) \le 1$, i.e., the Betti sequence of $\omega$ is bounded. Finally, in view of \cite[Prop.~1.1]{JL07}, one concludes that $R$ is Gorenstein.
\end{proof}

Related to Corollary~\ref{cor:char-Gor}, we note the following existing results in the literature.

\begin{para}\label{para:char-Gor}
	In \cite[Thm.~3.1]{Ulr84}, Ulrich proved that a CM local ring $R$ is Gorenstein if there is an $ R $-module $ L $ of positive rank such that $ e(L) < 2 \nu(L) $ and $ \Ext_R^{1 \le i \le d}(L,R) = 0 $. Hanes-Huneke in \cite[Thms.~2.5 and 3.4]{HH05} and Jorgensen-Leuschke in \cite[Thms.~2.2 and 2.4]{JL07} gave some analogous criteria.
%	: If $ R $ has a canonical module $ \omega $, and $ L $ is a CM $ R $-module such that for some $ (R \oplus L) $-regular sequence $ \underline{x} $ of length $ \dim(L) $, the length $ \lambda(\mathfrak{m} L/\underline{x}L) < \nu(L) $ and $ \Ext_R^i(L, R) = 0  $ for $ 1 + \dim(R) - \dim(L) \le i \le \dim(R) + \nu(\omega) $, then $ R $ is Gorenstein; see 
	Recently, Lyle and Monta\~{n}o \cite[Thm.~D]{LM20} showed that a CM generically Gorenstein local ring $R$
    % that has a canonical module 
    is Gorenstein if there is an MCM $R$-module $L$ such that $e(L) \le 2 \nu(L)$ and $ \Ext_R^{1 \le i \le d+1}(L,R) = 0 $.
\end{para}

The following corollary provides a characterization of CM local rings in terms of vanishing of Ext.

\begin{corollary}\label{cor:char-CM}
    The following statements are equivalent.
    \begin{enumerate}[\rm (1)]
        \item $R$ is CM, and it admits a dualizing module.
        \item $R$ admits a CM module $M$ of minimal multiplicity and satisfying $e(M) < 2 \mu(M)$, and there exists a semidualizing $R$-module $C$ such that $\Ext_R^{\gg 0}(M,C)=0$.
    \end{enumerate}
\end{corollary}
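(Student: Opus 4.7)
For (1) $\Longrightarrow$ (2), the plan is to exhibit the simplest possible witness: take $M := k$ and $C := \omega$, the canonical module of $R$. The module $k$ has dimension $0$, $\mu(\fm k) = 0$, $\mu(k) = 1$, and $e(k) = \lambda(k) = 1$, so it is CM of minimal multiplicity with $e(k) = 1 < 2 = 2\mu(k)$. Since $\omega$ is semidualizing with $\id_R(\omega) = \dim(R) < \infty$, we have $\Ext_R^n(k,\omega) = 0$ for all $n > \dim(R)$.

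For (2) $\Longrightarrow$ (1), the plan is to apply \Cref{thm:Ext-pd-id-test}(2) to the pair $(M,C)$ in order to force $\id_R(C) < \infty$, and then invoke \Cref{lem:semi-id-cano}. The hypotheses of \Cref{thm:Ext-pd-id-test}(2)---namely that $M$ has minimal multiplicity, that $e(M) < 2\mu(M)$, and that $M$ and $C$ are both nonzero---are either given or automatic (semidualizing modules are nonzero). It remains to verify the vanishing hypothesis in the correct range. Since $\Ext_R^{\gg 0}(M,C) = 0$, choose $N_0$ with $\Ext_R^n(M,C) = 0$ for all $n \ge N_0$, and pick any integer $j$ satisfying
\[
    j \ge \max\{\,\depth(C),\ N_0 + \dim(M) - 1\,\}.
\]
Because $\Ext_R^j(k,C)$ is a finitely generated $R$-module, $\mu_R^j(C)$ is finite, and the whole interval $[j-\dim(M)+1,\, j+\mu_R^j(C)]$ lies inside $[N_0,\infty)$; the required vanishing then holds automatically. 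Hence \Cref{thm:Ext-pd-id-test}(2) yields $\id_R(C) < \infty$.

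Finally, \Cref{lem:semi-id-cano} applies to the semidualizing module $C$ of finite injective dimension and shows that $C$ is a canonical module of $R$; in particular, $R$ is CM (indeed the lemma invokes Bass's conjecture, which is the substantive Cohen--Macaulay input). This gives (1) and completes the equivalence. The only delicate point of the argument is checking the interval-vanishing hypothesis of \Cref{thm:Ext-pd-id-test}(2), but this is routine once one notices that $\mu_R^j(C)$ is finite for each $j$; no additional obstacle is anticipated.
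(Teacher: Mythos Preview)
Your proposal is correct and follows essentially the same route as the paper: for (1)$\Rightarrow$(2) you take $M=k$ and $C=\omega$, and for (2)$\Rightarrow$(1) you apply \Cref{thm:Ext-pd-id-test}(2) to force $\id_R(C)<\infty$ and then invoke \Cref{lem:semi-id-cano}. The only difference is that you spell out the verification of the interval-vanishing hypothesis and the minimal-multiplicity check for $k$, whereas the paper leaves these implicit.
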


\begin{proof}
  (1) $\Longrightarrow$ (2): If $\omega$ is a canonical module of $R$, then $M:=k$ satisfies (2) with $C:=\omega$.
  
  (2) $\Longrightarrow$ (1):
  % So it requires to prove that (2) $\Rightarrow$ (1). From the conditions of (2), b
  In view of \Cref{thm:Ext-pd-id-test}.(2), $\id_R(C) < \infty$, and hence $C$ is a dualizing module. Also, by Bass' conjecture, $R$ is CM.
\end{proof}

% \old{
% \begin{corollary}
%     The following statements are equivalent.
%     \begin{enumerate}[\rm (1)]
%         \item $R$ is CM.
%         \item $R$ admits a CM module $M$ of minimal multiplicity satisfying $e(M) < 2 \mu(M)$, and there exists a nonzero $R$-module $N$ such that $\Ext_R^{\gg 0}(M,N)=0$.
%     \end{enumerate}
% \end{corollary}

% \begin{proof}
%     (1) $\Rightarrow$ (2): Consider a system of parameters ${\bf x}$ of $R$. Since $R$ is CM, ${\bf x}$ is $R$-regular, and $\pd_R(R/({\bf x}))<\infty$. Since $R/({\bf x})$ has finite length, the Matlis dual $N:=\Hom_R(R/({\bf x}),E_R(k))$ also has finite length. In particular, $N$ is a finitely generated $R$-module. Since $\pd_R(R/({\bf x}))<\infty$, it follows that $\id_R(N)<\infty$. Clearly, $M:=k$ and $N$ satisfy the conditions in (2).

%     (2) $\Rightarrow$ (1): By Theorem~\ref{thm:e-mu-pd-id}, $\id_R(N) < \infty$. Then, by Bass' conjecture, $R$ is CM.
% \end{proof}
% }

\section{Application to Auslander-Reiten conjecture}\label{sec:ARC-CM-MIN}

In this section, we mainly prove that \Cref{ARC} holds true for CM modules of minimal multiplicity over an arbitrary Noetherian local ring.

\begin{theorem}\label{thm:ARC-finite-pd}
    Let $M$ be a nonzero CM $R$-module. Then, $R$ is CM, and $\pd_R(M)<\infty$ if one of the following conditions holds.
    \begin{enumerate}[\rm (1)]
        % \item $\fm^2 M=0$ and $\Ext_R^n(M,M)=0$ for all $n\gg 0$.
       % \item $ e(M) > 2 \mu(M) $ and $\Ext_R^n(M,M)=0$ for all $n\gg 0$.
        \item 
        $ e(M) < 2 \mu(M)$, and there exists $j,l \ge 0$ such that
        \begin{enumerate}[]
            \item $\Ext_R^m(M,R)=0$ for all $m = j-\dim(M)+1,\ldots,j+d+\sum_{i=0}^d {d\choose i} \mu^{j+i}_R(R)$, where $d:=\dim(R)$, and
            \item $\Ext_R^n(M,M)=0$ for all $n = l-\dim(M)+1,\ldots,l+s+\sum_{i=0}^s {s\choose i} \mu^{l+i}_R(M)$, where $s=\dim(M)$.
        \end{enumerate}
        % \[
        %  \Ext_R^m(M,M)=0\; \mbox{ for all}\quad m= l-\dim(M)+1,\ldots,l+s+\sum_{i=0}^s {s\choose i} \mu^{l+i}_R(M), \mbox{where} \quad s=\dim(M),
        % \]
        % \[
        % \Ext_R^n(M,R)=0 \; \mbox{ for all}\quad n= j-\dim(M)+1,\ldots,j+d+\sum_{i=0}^d {d\choose i} \mu^{j+i}_R(R), \; \mbox{where} \quad d=\dim(R).
        % \]       
        \item 
        $ e(M) = 2 \mu(M) $, $M$ has minimal multiplicity, and $\Ext_R^n(M,M)=0$ for all $n\gg 0$.
        \item 
        $e(M) > 2 \mu(M) $, $M$ has minimal multiplicity, and there exists $l \ge 0$ such that $\Ext_R^n(M,M)=0$ for all $n= l+1, \ldots, l+\beta_l^{R}(M)+\dim(M)$.      
        %\item $ e(M) < 2 \mu(M) $ and $\Ext_R^n(M,M\oplus R)=0$ for all $n\gg 0$.
        % with
        % $$1\le i \le \dim(M)+\type(M) \quad \mbox{and} \quad 1\le j \le \max\left\{\depth(R)+\type(R),\dim(M)+\mu_R^{\dim(M)}(R)\right\}.$$
        % , where $r:=\dim(M)$.
        % , where $r:=\dim(M)$.
    \end{enumerate}
\end{theorem}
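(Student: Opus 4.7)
The plan is to prove each of the three cases separately; each reduces to a previously established test-module result of the paper, although case (2) is the most delicate.

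\textbf{Case (1).} The strategy is to apply Theorem~\ref{thm:e-mu-pd-id}.(2) twice in succession. Since $M$ itself is CM of dimension $s=\dim(M)$, taking $N=M$ in Theorem~\ref{thm:e-mu-pd-id}.(2), the second Ext-vanishing hypothesis on $\Ext_R^n(M,M)$ is exactly the input required and yields $\id_R(M)<\infty$. Since $M$ is a nonzero finitely generated module of finite injective dimension, Bass' theorem (resolved by Peskine--Szpiro / Roberts) forces $R$ to be CM of dimension $d$. Now, with $R$ itself CM of dimension $d$, I apply Theorem~\ref{thm:e-mu-pd-id}.(2) again with $N=R$ (CM of dimension $s=d$); the first Ext-vanishing hypothesis on $\Ext_R^n(M,R)$ matches the required range, giving $\id_R(R)<\infty$, i.e., $R$ is Gorenstein. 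Since over a Gorenstein local ring $\id_R(M)<\infty$ coincides with $\pd_R(M)<\infty$ for finitely generated $M$, I conclude $\pd_R(M)<\infty$.

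\textbf{Case (3).} This case is a direct application of Theorem~\ref{thm:Ext-pd-id-test}.(1) with $N=M$: under $e(M)>2\mu(M)$ and $M$ of minimal multiplicity, the given range of self-Ext vanishing $\Ext_R^n(M,M)=0$ matches the hypothesis and immediately gives $\pd_R(M)<\infty$. To conclude $R$ is CM, I use the standard consequence of the New Intersection Theorem that a local ring admitting a nonzero CM module of finite projective dimension must itself be CM: Auslander--Buchsbaum gives $\pd_R(M)=\depth(R)-\dim(M)$, while localizing at a minimal associated prime of $M$ (all of which have codimension $\dim(R)-\dim(M)$ by the unmixedness of CM modules) and applying the New Intersection Theorem forces $\pd_R(M)\ge\dim(R)-\dim(M)$; combining these gives $\depth(R)\ge\dim(R)$, so $R$ is CM.

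\textbf{Case (2).} The equality $e(M)=2\mu(M)$ is the principal obstacle. The plan is to pass to the faithfully flat extension $R[X]_{\fm[X]}$ to assume the residue field is infinite, select a system of parameters $\mathbf{x}$ of $M$ realizing $\fm^2M=(\mathbf{x})\fm M$ and $e(M)=\lambda(L)$ with $L:=M/\mathbf{x}M$, so that $\fm^2L=0$, $\mu(L)=\mu(M)$, and $\lambda(L)=2\mu(L)$; by Lemma~\ref{lem:cons-vanishing-Tor}.(2)--(3) the self-Ext vanishing $\Ext_R^n(M,M)=0$ for $n\gg 0$ transfers to $\Ext_R^n(L,L)=0$ for $n\gg 0$. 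The difficulty is that Proposition~\ref{prop:pd-id-test-m2M-zero} requires the strict inequality $\lambda(L)\ne 2\mu(L)$, and Example~\ref{exam:strict-ineq-cant-be-eq} shows that at this boundary the self-Ext hypothesis is indispensable. The intended route is to exploit the eventual self-Ext vanishing by passing to a sufficiently high syzygy $N=\Omega_R^tL$ so that $\Ext_R^{\ge 1}(N,N)=0$, and then invoking a Huneke--\c{S}ega--Vraciu-type argument \cite[Thm.~4.2]{HSV04} adapted to modules annihilated by $\fm^2$ whose length is exactly twice the number of generators; this yields $\pd_R(L)<\infty$, whence dimension-shifting along the regular sequence $\mathbf{x}$ gives $\pd_R(M)<\infty$, and the New Intersection Theorem argument from Case (3) then forces $R$ to be CM.
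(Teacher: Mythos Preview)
Your Cases (1) and (3) are correct and match the paper's argument essentially verbatim (the paper cites \cite[9.4.6, 9.4.8(a)]{BH98} for the CM conclusion, which is exactly the New Intersection consequence you invoke).

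Your Case (2), however, contains a genuine gap. The step ``pass to a sufficiently high syzygy $N=\Omega_R^tL$ so that $\Ext_R^{\ge 1}(N,N)=0$'' does not follow from $\Ext_R^{\gg 0}(L,L)=0$ alone. Dimension shifting gives $\Ext_R^i(\Omega_R^tL,\Omega_R^tL)\cong\Ext_R^{i+t}(L,\Omega_R^tL)$ for $i\ge 1$, but to relate $\Ext_R^{*}(L,\Omega_R^tL)$ back to $\Ext_R^{*}(L,L)$ you must shift in the \emph{second} variable along the syzygy sequences, and that requires vanishing of $\Ext_R^{\gg 0}(L,R)$, which is not part of the hypothesis in (2). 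Moreover, even granting $\Ext_R^{\ge 1}(N,N)=0$, the syzygy $N=\Omega_R^tL$ is a submodule of a free module and is not annihilated by $\fm^2$, nor does it satisfy $\lambda(N)=2\mu(N)$, so the Huneke--\c{S}ega--Vraciu argument you cite (which in \cite[Thm.~4.2]{HSV04} also presupposes $R$ is CM) does not apply to $N$.

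The paper avoids this entirely: for (2) it invokes \cite[Cor.~7.2]{DGS} to conclude directly that $R$ is complete intersection, and then applies \cite[Cor.~4.4]{AY98} (over a complete intersection, eventual self-Ext vanishing forces finite projective dimension) to get $\pd_R(M)<\infty$. This is the missing idea; there is no elementary workaround via syzygies in the boundary case $e(M)=2\mu(M)$.
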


\begin{proof}
    We first note that $\pd_R(M)<\infty$ would imply that $R$ is CM by \cite[9.4.6 and 9.4.8.(a)]{BH98}. So, we only need to prove that $\pd_R(M)<\infty$.

    When $M$ satisfies (1), in view of Theorem~\ref{thm:e-mu-pd-id}.(2), $\id_R(M) < \infty$. Therefore, by Bass' Conjecture, $R$ is CM. Again, by using Theorem~\ref{thm:e-mu-pd-id}.(2), one obtains that $\id_R(R)<\infty$, hence $R$ is Gorenstein. It follows that $\pd_R(M) < \infty$.
    
    For (2), by \cite[Cor.~7.2]{DGS}, $R$ is complete intersection, and hence \cite[Cor.~4.4]{AY98} yields that $\pd_R(M)<\infty$.

     When $M$ satisfies (3), in view of Theorem~\ref{thm:Ext-pd-id-test}.(1) one obtains that $\pd_R(M)<\infty$.
\end{proof}

The theorem below provides various sufficient conditions for a CM module of minimal multiplicity to be free in terms of vanishing of certain Ext modules. The reader may compare Theorems~\ref{thm:ARC-finite-pd}.(1) and \ref{thm:ARC-free-criteria}.(1). None of them follows from the other.
% \Cref{thm:ARC-free-criteria}.(1) is not a consequence of \Cref{thm:ARC-finite-pd}.(1).

\begin{theorem}\label{thm:ARC-free-criteria}
    Let $M$ be a nonzero CM $R$-module of minimal multiplicity. Furthermore, assume that one of the following conditions holds.
    \begin{enumerate}[\rm (1)]
        \item $ e(M) < 2 \mu(M) $, $\Ext_R^m(M,R)=0$ for all $1\le m \le \max\big\{\depth(R)+\type(R),\, \dim(M)+\mu_R^{\dim(M)}(R)\big\}$, and $\Ext_R^n(M,M)=0$ for all $1\le n \le \dim(M)+\type(M)$.
        \item $ e(M) = 2 \mu(M) $ and $\Ext_R^n(M,M)=0$ for all $n\ge 1$.
        \item $ e(M) > 2 \mu(M) $ and $\Ext_R^n(M,M)=0$ for all $$1\le n \le \max\{\mu(M)+\dim(M),\,\depth(R)\}.$$
    \end{enumerate}
    Then, $M$ is free, and $R$ is CM of minimal multiplicity.
\end{theorem}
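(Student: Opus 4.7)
My overall strategy in all three cases is to first establish $\pd_R(M)<\infty$, then force $\pd_R(M)=0$---so that $M\cong R^n$---whence the assertion that $R$ is CM of minimal multiplicity follows by direct verification of the defining equation for minimal multiplicity on $R^n$. Passing to $R[X]_{\fm[X]}$ via \Cref{rmk:flat-ext-CM} and \Cref{rmk:flat-ext-Ulrich}, I may assume $k$ is infinite. Then by \Cref{lem:min-mult-mod} I obtain an $M$-regular system of parameters ${\bf x}=x_1,\ldots,x_s$ (with $s=\dim M$) satisfying $\fm^2 M=({\bf x})\fm M$, so that $L:=M/{\bf x}M$ satisfies $\fm^2 L=0$, $\lambda(L)=e(M)$, $\mu(L)=\mu(M)$, and $\type(L)=\type(M)$.

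For \textbf{case (1)} I plan to apply \Cref{prop:pd-id-test-m2M-zero}.(2) twice, using \Cref{lem:cons-vanishing-Tor}.(2) to transfer Ext vanishing from $M$ to $L$. First, with $X=R$ and $j=\max\{\dim M,\depth R\}$: the requirement $j\ge\depth R$ is automatic, and the $\max$ in the hypothesized Ext range precisely covers both sub-cases---$\depth R\le\dim M$ (needing the bound $\dim M+\mu^{\dim M}_R(R)$) and $\depth R>\dim M$ (needing $\depth R+\type R$)---yielding $\id_R(R)<\infty$, hence $R$ is Gorenstein. Second, with $X=M$ and $j=\dim M=\depth M$: the vanishing $\Ext^n_R(M,M)=0$ for $1\le n\le \dim M+\type M$ transfers to $\Ext^n_R(L,M)=0$ for $\dim M+1\le n\le \dim M+\mu^{\dim M}_R(M)$, giving $\id_R(M)<\infty$, and Gorensteinness upgrades this to $\pd_R(M)<\infty$. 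Finally, once $R$ is Gorenstein of dimension $d$, the hypothesized Ext range for $\Ext^{*}_R(M,R)$ simplifies to $1\le i\le d+1$ (because $\type R=1$ and $\mu^i_R(R)=0$ for $i\ne d$); combined with the well-known non-vanishing $\Ext^{d-\dim M}_R(M,R)\ne 0$ for any nonzero CM module over a Gorenstein ring (local duality applied to $H^{\dim M}_{\fm}(M)\ne 0$), this forces $\dim M=d$, so $M$ is MCM, and the Auslander--Buchsbaum formula yields $\pd_R(M)=0$.

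For \textbf{cases (2) and (3)} I plan to invoke \Cref{thm:ARC-finite-pd}.(2) and .(3) respectively---in (3) with $l=0$, so that the range $1\le n\le \mu(M)+\dim M$ needed in \Cref{thm:ARC-finite-pd}.(3) is contained in the hypothesis---to conclude $\pd_R(M)<\infty$. Suppose for contradiction that $p:=\pd_R(M)\ge 1$. Since all differentials in a minimal free resolution $F_\bullet\to M$ have entries in $\fm$, Nakayama gives $\Ext^p_R(M,M)/\fm\Ext^p_R(M,M)\cong k^{\beta_p\mu(M)}\ne 0$, so $\Ext^p_R(M,M)\ne 0$. In (2) this contradicts the all-degree vanishing; in (3), Auslander--Buchsbaum gives $p=\depth R-\dim M\le \depth R$, placing $p$ inside the vanishing range $[1,\max\{\mu(M)+\dim M,\depth R\}]$, again a contradiction. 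Thus $p=0$ and $M$ is free. In every case $M\cong R^n$ with $n\ge 1$; then $R$ is CM of dimension $s$, and the minimal multiplicity equation for $M=R^n$ reads $n\cdot e(R)=n\mu(\fm)+n(1-s)$, which is precisely the minimal multiplicity equation for $R$.

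The principal technical obstacle is the first application of \Cref{prop:pd-id-test-m2M-zero}.(2) in case (1): the constraint $j\ge\depth R$ forces the hypothesized Ext range to be written as a $\max$ of two terms, and the uniform argument requires choosing $j$ correctly in each sub-case without knowing a priori whether $\depth R\le\dim M$. All remaining steps reduce to careful bookkeeping of Ext ranges together with standard applications of the results in Sections~\ref{sec:preliminaries}--\ref{sec:test-modules} and the Auslander--Buchsbaum formula.
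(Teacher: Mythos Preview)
Your argument is correct and follows the same route as the paper: in each case you first obtain $\pd_R(M)<\infty$ (via \Cref{thm:ARC-finite-pd} in cases (2) and (3), and via two applications of \Cref{thm:Ext-pd-id-test}.(2) giving $\id_R(M)<\infty$ and $\id_R(R)<\infty$ in case (1)), then force $\pd_R(M)=0$ using the nonvanishing of the top Ext, and finally read off that $R$ is CM of minimal multiplicity from $M\cong R^n$. The only cosmetic differences are that the paper cites \cite[p.~154, Lem.~1.(iii)]{Mat86} where you spell out the top-Ext computation, and in case~(1) the paper concludes freeness directly from $\Ext^{1\le j\le \depth R}_R(M,R)=0$ rather than detouring through local duality to show $\dim M=d$.
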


\begin{proof}
    In view of \cite[Rmk.~4.4]{DGS} and Definition~\ref{defn:ring-min-mult}, one derives that if a free $R$-module is CM of minimal multiplicity, then the ring $R$ is CM of minimal multiplicity. So we only need to prove that $M$ is free.
    % \new{The latter is a consequence of Theorem~\ref{thm:ARC-finite-pd} and \cite[p.~154, Lem.~1.(iii)]{Mat86}.}
    
    If $M$ satisfies (2) or (3), in view of Theorem~\ref{thm:ARC-finite-pd}.(2) and (3) respectively, $\pd_R(M)<\infty$. Hence it follows from \cite[p.~154, Lem.~1.(iii)]{Mat86}
    % (or \cite[Cor.~4.4]{AY98}) 
    that $M$ is free.

   Suppose that $M$ satisfies (1). Then, by Theorem~\ref{thm:Ext-pd-id-test}.(2) for $N:=M$ and $j=\dim(M)$, one gets that $\id_R(M)<\infty$. Depending on whether $\depth(R)>\dim(M)$ or $\depth(R)\le \dim(M)$, considering $j=\depth(R)$ or $j=\dim(M)$ respectively, Theorem~\ref{thm:Ext-pd-id-test}.(2) for $N:=R$ yields that $\id_R(R)<\infty$, i.e., $R$ is Gorenstein. Therefore, since $\id_R(M)<\infty$, one obtains that $\pd_R(M)<\infty$. Hence, as $\Ext_R^j(M,R)=0$ for all $1\le j \le \depth(R)$, it follows that $M$ is free.
\end{proof}

In particular, Theorems~\ref{thm:ARC-free-criteria} and \ref{thm:ARC-finite-pd}, respectively, prove \Cref{ARC}.(1) and (2) for every CM module of minimal multiplicity over a Noetherian local ring.

\mycomment{
\section*{\bf Declarations}
{\bf Ethical approval:} The work in this article is original and is not submitted anywhere else for possible publication.

{\bf Competing interest:} None of the authors have competing interests.

{\bf Authors' contributions:} All the authors have equal contributions on each of the following: Investigation, Methodology, Writing the article and Review.

{\bf Funding:} Dey was partly supported by the Charles University Research Center program No.UNCE/24/SCI/022 and a grant GA \v{C}R 23-05148S from the Czech Science Foundation. Saha was supported by Senior Research Fellowship (SRF) from UGC, MHRD, Govt.\,of India.

{\bf Availability of data and materials:} This article has no associated data.
}

\end{document}